\pgfplotsset{compat=newest}
\def\R{{\mathbb R}}
\def\E{{\mathbb E}}
\def\P{{\mathbb P}}
\def\N{{\mathbb N}}
\def\F{{\mathcal F}}
\def\B{{\mathcal B}}
\def\Pma{{\mathcal P}}
\def\1{{\mathds 1}}
\def\vphi{{\varphi}}
\newtheorem{theorem}{Theorem}[section]
\newtheorem{prop}[theorem]{Proposition}
\newtheorem{coro}[theorem]{Corollary}
\newtheorem{remark}[theorem]{Remark}
\newtheorem{lemma}[theorem]{Lemma}
\newtheorem{hyp}[theorem]{Hypothesis}
\newtheorem{definition}[theorem]{Definition}
\renewcommand{\theequation}{\arabic{section}.\arabic{equation}}
\newcommand{\argmin}{\mathop{\mathrm{arg\,min}}}
\def\R{\mathbb R}
\def\N{\mathbb N}
\def\E{\mathbb E}
\def\P{\mathbb P}
\def\Q{\mathbb Q}
\def\U{\mathbb U}
\def\sha{{\cal A}}
\def\shb{{\cal B}}
\def\shc{{\cal C}}
\def\shf{{\cal F}}
\def\shj{{\cal J}}
\def\shk{{\cal K}}
\def\shl{{\cal L}}
\def\shp{{\cal P}}
\def\shv{{\cal V}}
\def\shy{{\cal Y}}
\def\shz{{\cal Z}}
\author{
{\sc Thibaut BOURDAIS}
\thanks{ENSTA Paris, Institut Polytechnique de Paris.
Unit\'e de Math\'ematiques Appliqu\'ees (UMA).
 E-mail:{ \tt thibaut.bourdais@ensta-paris.fr}} 
{\sc,}\ {\sc Nadia OUDJANE}
\thanks{EDF R\&D,   and FiME (Laboratoire de Finance des March\'es de l'Energie
(Dauphine, CREST,  EDF R\&D) www.fime-lab.org). 
E-mail:{\tt  
nadia.oudjane@edf.fr}}
\ {\sc and}\ {\sc Francesco RUSSO} 
\thanks{ENSTA Paris, Institut Polytechnique de Paris.
Unit\'e de Math\'ematiques Appliqu\'ees (UMA). 
 E-mail:{\tt  francesco.russo@ensta-paris.fr}.
 }}
\date{July 2025}
\title{An entropy penalized approach for stochastic control problems. Complete version.}
\newcommand{\MBFigure}[6]{
$\left. \right.$ \\
\refstepcounter{figure}
\addcontentsline{lof}{figure}{\numberline{\thefigure}{\ignorespaces #5}}
\begin{center}
\begin{minipage}{#1cm}
\centerline{\includegraphics[width=#2cm,angle=#3]{#4}}
\begin{center}
\upshape{F\textsc{ig} \normal
\end{center}
size{\thefigure}. $-$} #5
\end{center}
\label{#6}
\end{minipage}
\end{center}
$\left. \right.$ \\}
\begin{document}
\maketitle
\begin{abstract}
  In this paper, we propose an original approach to stochastic control problems. We consider a weak formulation that is written as an optimization (minimization) problem on the space of probability measures. We then introduce a penalized version of this problem obtained by splitting the minimization variables and penalizing the discrepancy 
  between the two variables via an  entropy term.
  We show that the penalized problem provides a good approximation of
  the original problem when the weight of the entropy
  penalization term is large enough. Moreover, the penalized problem
  has the advantage of giving rise to two optimization subproblems that are easy to solve in each of the two optimization
  variables when the other is fixed. We take advantage of this property to propose an alternating optimization procedure
  that converges to the infimum of the penalized problem with a rate $O(1/k)$, where $k$ is the number of iterations.
  The relevance of this approach is illustrated by solving a high-dimensional stochastic control problem aimed at controlling consumption in electrical systems.

 \end{abstract}
\medskip\noindent {\bf Key words and phrases:}  
Stochastic control;  optimization; Donsker-Varadhan representation;
exponential twist; relative entropy; demand-side management.

\medskip\noindent  {\bf 2020  AMS-classification}: 49M99; 49J99; 60H10; 
 60J60; 65C05.

 % % % % % % % % % % % % % % % % % % % % % % % % % % % % % % %
\section{Introduction}

{\it General framework.} Stochastic control problems appear in many fields of application such as robotics \cite{RobotPathIntegral}, economics and finance \cite{touzibook}. These problems are, either tackled using the Pontryagin's optimality principle or
the dynamic programming principle,
which allows
the representation of the value function via  nonlinear Hamilton-Jacobi-Bellman PDEs or Backward Stochastic Differential
Equations (BSDEs).
The idea of this paper is to propose a radically different approach based on a weak reformulation of the stochastic control problem as an optimization problem on the space of probability measures. We propose
an entropic penalization of this optimization problem which suitably approximates the original control problem.
%Under mild  conditions on t
We prove the convergence of an alternating optimization procedure to the infimum of the penalized problem: the interest of this procedure is demonstrated in simulation compared with classical techniques relying on dynamic programming.
%shown to be competitive in simulation with existing regression-based Monte Carlo approaches relying on dynamic programming.
The proof of the convergence of our algorithm relies on geometric arguments rather than classical convex optimization techniques.

{\it Problem formulation.} On some filtered probability space $(\Omega, \shf,\P)$, we are interested in a problem of the type
\begin{equation}
	\label{eq:strongControlIntro}
	J^*_{strong} := \inf_{\nu } \E\left[\int_0^T f(r, X_r^{\nu}, \nu_r) dr
	+ g(X_T^\nu)\right],
\end{equation}
where  $ \nu $ is a progressively measurable process taking values
in some fixed
convex domain $\U \subset \R^d$. 
$X = X^\nu$ will be a controlled diffusion process taking values in $\R^d$ of the form
\begin{equation} \label{eq:nu}
	X_t^\nu = x + \int_0^t b(r, X_r^\nu, \nu_r)dr + \int_0^t \sigma(r, X_r^\nu) dW_r.
\end{equation}
  Under some mild supplementary assumptions,
Problem \eqref{eq:strongControlIntro} can be reformulated as an optimization program on the space of probability measures, in the  form
\begin{equation}
	\label{eq:controlProblemIntro}
	J^*:= \inf_{\P \in \Pma_{\U}} J(\P),\quad\textrm{with}\quad 
	J(\P):=\E^\P\left[\int_0^T f(r, X_r, \nu_r^\P)dr + g(X_T)\right],     
\end{equation}
 $\Pma_{\U}$ being a set of probability measures, defined in Definition \ref{def:PU}, such that, under $\P \in \Pma_\U$, the canonical process
$X$ is decomposed as
\begin{equation} \label{eq:nu_u}
  X_t = x + \int_0^t b(r, X_r, \nu_r^\P)dr + \int_0^t \sigma(r,X_r) dW_r,
\end{equation}
where $\nu^\P$ is a progressively measurable process  with respect to the
canonical filtration $\F^X$  of $X$ taking values in $\U$ and $W$ is some standard Brownian motion.
In particular we will have $J^*_{strong} = J^*$.
In the sequel, to insist on the path-dependence of $\nu$, we will write
$\nu_t  = \nu(t,X)$.
We refer to Appendix \ref{app:equiControl} for the precise link between the different formulations of stochastic control problems \eqref{eq:strongControlIntro} and \eqref{eq:controlProblemIntro}.

One major difficulty in analyzing Problem \eqref{eq:controlProblemIntro} is the lack of convexity of the functional $J$ in~\eqref{eq:controlProblemIntro} with respect to $\P$,
even though the literature includes some techniques to transform the original problem into a minimization
of a convex functional, see e.g. \cite{benamou}. 
For that reason, we cannot rely on classical convex analysis techniques, see e.g. \cite{ekeland}, in order to perform related algorithms, see e.g. \cite{Bonnans_Gilbert}.
As announced above, our method consists in replacing Problem~\eqref{eq:controlProblemIntro}  with the penalized version
\begin{equation}
	\label{eq:penalizedProblemIntro}
	\shj_\epsilon^*:=
	\inf_{(\P, \Q) \in \mathcal{A}} \shj_\epsilon(\Q,\P),\quad \textrm{with}\quad 
	\shj_\epsilon (\Q,\P):=
	\E^\Q \left[\int_0^T f(r, X_r, \nu_r^\P)dr
	+ g(X_T)\right] + \frac{1}{\epsilon}H(\Q | \P),
\end{equation}
where $\mathcal{A}$ is a subset of elements $(\P, \Q) \in \Pma(\Omega)^2$ defined in Definition \ref{def:A}, $H$ is the relative entropy, see Definition \ref{def:klDiv}, and the penalization parameter $\epsilon > 0$ is intended to vanish to zero, in order to impose $\Q = \P$. 

{\it Main contributions.} In Theorem \ref{th:existenceSolutionRegProb} one shows that
the infimum in \eqref{eq:penalizedProblemIntro} is indeed a minimum
$\shj_\epsilon^*=\shj_\epsilon (\Q^*_\epsilon,\P^*_\epsilon)$,
attained
on some admissible couple of probability measures $(\P^*_\epsilon, \Q^*_\epsilon)\in \mathcal{A}$.
Given  one solution $(\P^*_\epsilon, \Q^*_\epsilon)$ of Problem \eqref{eq:penalizedProblemIntro}, 
Proposition \ref{prop:approximateControl}
shows that $\P_\epsilon^*$
is an approximate solution of Problem \eqref{eq:controlProblemIntro}, in the sense 
that the infimum $J^*$ can indeed be approached by $J(\P^*_\epsilon)$ where $\P^*_\epsilon \in \Pma_\U$ when $\epsilon \rightarrow 0$, and more precisely $J(\P^*_\epsilon) - J^* = O(\epsilon)$.
The interest of the penalized Problem  \eqref{eq:penalizedProblemIntro}, 
with respect to the original Problem \eqref{eq:controlProblemIntro}, is that the minimization of the functional $\shj_\epsilon$, with respect to one variable $\Q$ or $\P$ (the other variable being fixed), can be provided quasi-explicitly. This is the object of Section \ref{sec:subProblems}. Indeed, Proposition \ref{prop:pointwiseMinimization} states that the minimization with respect to $\P$ can be reduced to a pointwise minimization, provided that $\Q$ has a Markovian decomposition. In this situation, there exists a function
  $(t, x) \mapsto u(t, x) \in \U$, such that, for all $(t, x) \in [0, T] \times \R^d$, $u(t, x)$ is independently obtained as the minimum of a strictly convex function and such that the infimum of the minimization $\underset{\P \in \shp_\U}{\inf}{\shj_\epsilon(\Q, \P)}$ is attained by the unique probability measure $\P \in \shp_\U$ verifying $\nu_t^\P = u(t, X_t)$.
Concerning the minimization with respect to $\Q$, Proposition \ref{prop:markovianDrift} characterizes the explicit solution of the subproblem. In fact, this is a well-known problem
in the area of large deviations, see \cite{DupuisEllisLargeDeviations}.
It gives rise to a variational representation formula relating log-Laplace transform of the costs and relative entropy,
which is linked to a specific case of stochastic optimal control,
for which it is possible to linearize the HJB equation by an exponential transform, see \cite{FlemingExpoTrans, FlemingPathIntegral}. This type of problem is known as {\it path integral control} and it has been extensively studied with many applications,
see \cite{ControlFlexibility,PathIntegralControl, RobotPathIntegral}.

In Section \ref{sec:algo} we introduce an alternating minimization procedure \eqref{eq:alternateMinimizationProcedure}, which consists in solving
sequentially
each subproblem, in $\Q$ and $\P$, alternatively.
In Theorem \ref{th:convAlgo}, we prove that the iterated values generated by this procedure converge to the minimum value $\shj^*_\epsilon$.
We insist again on the fact
that $\shj_\epsilon $
is not jointly convex with respect to $(\Q, \P)$, so, the proof of Theorem \ref{th:convAlgo} relies on geometric arguments developed in \cite{CsiszarAlternating}. In Section \ref{sec:example}, we show the relevance of this algorithm compared to classical dynamic programming techniques, by performing an application to the control of thermostatic loads in power systems.
%Monte Carlo based regression techniques by considering an application dedicated to the control of thermostatic loads in power systems.

{\it Link to the literature.} Interest in optimization problems on the space
of probability measures has increased strongly during the  recent years with the Monge-Kantorovitch optimal transport problem,
which, for two fixed Borel probability measures on $\R^d$, $\nu_1$ and $\nu_2$
consists in
determining a joint law, whose marginals are precisely  $\nu_1$ and $\nu_2$,
minimizing an expected given cost. Benamou and Brenier in \cite{benamou}
propose a dynamical formulation of
this problem: it consists in an optimal control problem where the aim is to minimize the integrated kinetic energy of a deterministic dynamical system over a given time horizon, in order to go from the initial law $\nu_1$ to $\nu_2$ as terminal law.
In \cite{ThieullenMikami}, the authors replace the deterministic dynamical system with a diffusion,
introducing the so called stochastic mass transportation problem. This consists in controlling the drift of the diffusion  to minimize,
over a given finite horizon, a mean integrated cost depending on the drift and the state of the process, while imposing the initial and final distribution of the diffusion.
Those authors formulate their problem as an optimization on a space of probability measures, for which they make use of convex duality techniques.
In \cite{TanTouzi}, the authors generalize these techniques
controlling the volatility as well. Those authors also propose a numerical scheme in order to approximate the dual formulation of their stochastic mass transport problem. In the same spirit as in \cite{ThieullenMikami}, in this paper, we formulate a stochastic optimal control problem 
as a minimization on the
space of probability measures.
However our approach is based, on the one hand, on an entropy correction and, on the other hand, on an alternating procedure.
	
  Similar ideas based on an entropy correction and an alternating procedure were introduced in the context of
  optimal transport, see e.g.
  \cite{IterativeBregman, LightspeedComputation}. In \cite{IterativeBregman},
%in order to approach the aforementioned optimal transport problem,
  the authors are interested in the discrete optimal mass transport problem.
  To approach that problem, they
  introduce an entropic regularization
  % of the discrete optimal mass transport problem
  which consists in minimizing a relative entropy $H(\gamma | \xi)$ over a subset $\shk := \shk_1 \cap \shk_2$ of joint probability measures on $\R^{d \times d}$, where $\xi$ is a reference probability measure on $\R^{d \times d}$, $\shk_1$ is a subset of $\shp(\R^{d \times d})$ with a given first marginal, while $\shk_2$ imposes the second marginal. The solution to this new problem is approximated by a sequence $\left(\gamma^{(n)}\right)_{n \ge 1}$, where
  $\gamma^{(n + 1)}$ is the entropic projection of $\gamma^{(n)}$ on the set $\shc_n$,
where $\shc_{2p} := \shk_2$ and $\shc_{2p + 1} := \shk_1$ for $p \in \N$.
  This means
  $\gamma_n  = \underset{\gamma \in \shc_n}{\argmin} ~ H(\gamma | \gamma^{(n)})$.

This type of methods and their generalization to continuous states distributions are commonly referred to as Sinkhorn algorithms and are widely used in optimal transport and related fields, such as the Schr\"odinger Bridge problem, see e.g. \cite{CattiauxLeonard, SchrodingerBridgeStochasticControl, LeonardSchrodinger, SchrodingerBridgeData} for detailed accounts. However, the approach we propose here is resolutely different and differs from these classical methods in two aspects. Firstly, our approach is based on a duplication of the optimization variables, and the entropy correction term we introduce is a {\it penalty term}, designed to impose equality on the duplicated variables.
%in the limit $\epsilon \rightarrow 0$.
Furthermore, our alternating procedure aims to sequentially optimize the penalized objective function, in the first and then second variable involved in the entropy penalty, whereas the Sinkhorn alternating projection algorithm is always driven to minimize the cross entropy term with respect to the first variable.

% Our reformulation offers both numerical and theoretical advantages. From a numerical point of view, our algorithm relies on two standard optimization sub-problems that are simpler than the original stochastic control problem and that can be tackled by specific numerical schemes. For example, one of the two sub-problems (called exponential twist problem) corresponds to a stochastic control problem with no constraints on the control, and can therefore efficiently be tackled by regression methods \cite{bender10, gobet10bis, gobet17} or deep learning methods as in \cite{MLPDEWarin, Germain22, HighDimensionalPDEDL, Hure20}. Hence, our algorithm constitutes a complementary approach to existing regression or machine learning techniques developed to solve stochastic control problems.
	Our reformulation offers both numerical and theoretical advantages. From a numerical point of view, our algorithm relies on two standard optimization sub-problems that are simpler than the original stochastic control problem and that can be tackled by specific numerical schemes.
        For example, one of the two sub-problems (called exponential twist problem)
has an explicit solution, which can be evaluated at each time step by parallel computations of conditional expectations.
        On the other hand, that sub-problem
        corresponds to a stochastic control problem with no constraints on the control, and can therefore efficiently
        be tackled by regression methods \cite{bender10, gobet10bis, gobet17} or deep learning methods as in \cite{MLPDEWarin, Germain22, HighDimensionalPDEDL, Hure20}. Hence, our algorithm constitutes a complementary approach to existing regression or machine learning techniques developed to solve stochastic control problems.
	
From a theoretical point of view, the entropy penalization approach offers new perspectives for reformulating complex stochastic control problems including, for example, constraints on the marginal laws of the controlled process, see e.g. Schr\"odinger bridge. This is the subject of
a the recent paper 
\cite{BORTargetLaw}.

The paper is organized as follows. After this Introduction, Section \ref{S2}
is devoted to the basic definitions and notations. In Section \ref{S3} we introduce an entropy 
penalized optimization problem approaching the original stochastic control problem, see in particular
Proposition \ref{prop:approximateControl}.
The subsequent Section \ref{sec:algo} proposes an alternating minimization procedure
to approximate the solution of the entropy penalized problem:
in particular Theorem \ref{th:convAlgo}  establishes that convergence.
Section \ref{sec:example} illustrates the interest of the approach on a specific application
to demand-side management       in power systems.
Some new perspectives of our method are sketched in Section \ref{S6}.

The paper is organized as follows. After this Introduction Section \ref{S2}
is devoted to the basic definitions and notations. In Section \ref{S3} we introduce an entropy 
penalized optimization problem approaching the original stochastic control problem, see in particular
Proposition \ref{prop:approximateControl}.
The subsequent Section \ref{sec:algo} proposes an alternating minimization procedure
to approximate the solution of the entropy penalized problem.
In particular Theorem \ref{th:convAlgo}  establishes that convergence.
Section \ref{sec:example} illustrates the interest of the approach on a specific application
 demand-side management       in power systems.
 Some new perspectives of our method are sketched in Section \ref{S6}.
 We conclude the paper with the Appendices,
 which contain the proof of most the technical intermediate results.

 \section{Notations and definitions}
%--------------------------------------------------------------------------------

\label{S2}
\setcounter{equation}{0}

In this section we introduce the basic notions and notations used throughout this document. In what follows, $T > 0$ will be a fixed time horizon.
\begin{itemize}
\item All vectors $x \in \R^d$ are column vectors. Given $x \in \R^d$, $|x|$ will denote its Euclidean norm.
	
\item Given a matrix $A \in \R^{d \times d}$, $\|A\| := \sqrt {Tr[AA^\top]}$ will denote its Frobenius norm.
  
	\item Given $\phi \in C^{1, 2}([0, T] \times \R^d, \R)$, $\partial_t\phi$, $\nabla_x \phi$ and $\nabla_x^2\phi$ will denote respectively the partial derivative of $\phi$ with respect to (w.r.t.) $t \in [0, T]$, its gradient and its Hessian matrix  w.r.t. $x \in \R^d$.
	\item Given any bounded function $\Phi: \R^d \rightarrow  \R$,
                    we denote by $\vert \Phi \vert_\infty$ its supremum.

	\item $\U$ will denote a closed subset of $\R^p$ for some $p \in \N^*$.
	
	\item For any topological spaces $E$ and $F,$ $\mathcal{B}(E)$ will denote the Borel $\sigma$-field of $E;$ $C(E, F)$ (resp. $\B(E, F)$) will denote the linear space of functions from $E$ to $F$ that are continuous (resp. Borel). $\Pma(E)$ will denote the Borel probability measures on $E$. Given $\P \in \Pma(E),$ $\E^\P$ will denote the expectation with respect to (w.r.t.) $\P.$
	
	\item Except if differently specified, $\Omega$ will denote the space of continuous functions from $[0, T]$ to $\R^d.$ For any $t \in [0, T] $ we denote by $X_t : \omega \in \Omega \mapsto \omega_t$ the coordinate mapping on $\Omega.$ We introduce the $\sigma$-field $\shf := \sigma(X_r, 0 \le r \le T)$. On the measurable space $(\Omega, \F),$ we introduce the \textit{canonical process} $X : \omega \in ([0, T] \times \Omega, \B([0, T])\otimes \F) \mapsto X_t(\omega) = \omega_t \in (\R^d, \B(\R^d))$.\\
          We endow $(\Omega, \F)$ with the right-continuous filtration
          $\F_t := \underset{t \le s \le T}{\bigcap}
          \sigma(X_r, 0 < r \le s),  \ t \in [0,T].$
          The filtered space
        $(\Omega, \F, (\F_t))$ will be called the \textit{canonical space} (for the sake of brevity, we denote $(\F_t)_{t \in [0, T]}$ by $(\F_t)$).
\item Given a function $\lambda: [0,T] \times \R^d \rightarrow \R$,
  $p \ge 1$
  and a Borel probability $\Q$ on $\Omega$, we will improperly say that
  $\lambda \in L^q(dt \otimes \Q)$ if the map
  $(t,\omega) \mapsto \lambda(t,X_t(\omega)) \in  L^q(dt \otimes \Q)$.

      \item Given a continuous local martingale
        %(locally) square integrable martingale
%%%THIBAUT Pas besoin de carré intégrabilité pour le crochet droit
        $M$, $[ M ]$ will denote its \textit{quadratic variation}.
        % Given two square integrable martingales $M, N$,  $[M, N]$ will denote the \textbf{quadratic covariation} while $\langle M, N \rangle$ will denote their predictable \textbf{angle bracket}. If $M = N,$ we will use the notations $[M]$ and $\langle M\rangle.$
        %%%% THIBAUT. Il y avait aussi la covariation
	
	\item Equality between stochastic processes are in the sense of \textit{indistinguishability}.
	
	\item Except if specified otherwise, all properties of processes (e.g. measurability, martingale) are with respect to the canonical filtration $(\shf_t)_{t \in [0, T]}$.
	
\end{itemize}
\begin{definition} (Relative entropy).
	\label{def:klDiv}
	Let $E$ be a topological space. Let $\P, \Q \in \Pma(E).$ The relative entropy $H(\Q | \P)$ between the measures $\P$ and $\Q$ is defined by
	\begin{equation}
		\label{eq:relativeEntropy}
		H(\Q | \P) :=
		\left\{
		\begin{aligned}
			& \E^{\Q}\left[\log \frac{d\Q}{d\P}\right] &\text{if $\Q \ll \P$}\\
			& + \infty &\text{otherwise,}
		\end{aligned}
		\right.
	\end{equation}
	with the convention $\log(0/0) = 0$.
\end{definition}
\begin{remark}
  \label{rmk:relativeEntropy}
  Let $E$ be a Polish space.
  The relative entropy $H: \shp(E) \times \shp(E)$ is
  non-negative and jointly convex,
    that is for all $\P_1, \P_2, \Q_1, \Q_2 \in \Pma(E)$, for all $\lambda \in [0, 1]$, $H(\lambda \Q_1 + (1 - \lambda) \Q_2 | \lambda \P_1 + (1 - \lambda)\P_2) \le \lambda H(\Q_1 | \P_1) + (1 - \lambda)H(\Q_2| \P_2)$. Moreover, $(\P, \Q) \mapsto H(\Q | \P)$ is lower semicontinuous with respect to the weak-star topology on $E^*$.
  We refer to \cite{DupuisEllisLargeDeviations} Lemma 1.4.3 for a proof of these properties.
\end{remark}

\begin{definition} \label{def:MinSeq} (Minimizing sequence, solution and
  $\epsilon$-solution). 
  Let $E$ be a generic set. Let $J : E \mapsto \R$ be a function. Let $J^* := \underset{x \in E}{\inf} J(x)$, which can be finite or not.
  \begin{enumerate}
  	\item A {\bf minimizing sequence} for $J$ is a sequence $(x_n)_{n \ge 0}$ of elements of $E$ such that $J(x_n) \underset{n \rightarrow + \infty}{\longrightarrow} J^*$.
  	
  	\item We will say that $x^* \in E$ is a solution to the optimization problem
  	\begin{equation} \label{eq:MinSeq}
  		\underset{x \in E}{\inf} J(x),
  		%\ {\rm if} \ J(x^*) = J^*.
  	\end{equation}
  	if $J(x^*) = J^*.$
  	In this case,
  	$J^* = \underset{x \in E}{\min} J(x)$.
  	
  	\item For $\epsilon \ge 0$, we will say that $x^\epsilon \in E$ is an $\epsilon$-{\bf solution} to the
  	optimization Problem
  	\eqref{eq:MinSeq}
  	if $0 \le J(x^\epsilon) - J^* \le \epsilon$. We also say that $x^\epsilon$ is $\epsilon$-{\bf optimal} for the (optimization) Problem
  	\eqref{eq:MinSeq}.
  \end{enumerate}
   
  \end{definition}
  We remark that a $0$-solution is a solution of the optimization Problem
  \eqref{eq:MinSeq}.
  
\section{From the stochastic optimal control problem to a penalized optimization problem}
%-----------------------------------------------------------

      \label{S3}

      \setcounter{equation}{0}

      In this section we consider a stochastic control problem that we reformulate in terms of an optimization problem on a space of probability measures.
      Later we propose a penalized version of that problem whose solutions are $\varepsilon$-optimal for the original problem.

      \subsection{The stochastic optimal control problem}
      We specify the assumptions and the formulation of the stochastic optimal control Problem \eqref{eq:controlProblemIntro}, stated in the
      Introduction. Let us first consider a drift $b \in \B([0, T] \times \R^d \times \U, \R^d)$ and a diffusion matrix $\sigma \in \B([0, T] \times \R^d, \R^{d \times d})$, following the assumptions below.
\begin{hyp} (Diffusion coefficients).
	\label{hyp:coefDiffusion}
	\begin{enumerate}
		\item $b$ is continuous in $(t, x, u)$.
		\label{item:bContinuous}
		
		\item There exists a constant $C_{b, \sigma} > 0$ such that, for all $(t, x) \in [0, T] \times \R^d, u \in \U, $
		\begin{equation}
			\label{eq:linearGrowthbSigma}
		|b(t, x, u)| + \|\sigma(t, x)\| \le C_{b, \sigma}(1 + |x|).
		\end{equation}
		\item There exists $c_\sigma > 0$ such that for all $(t, x) \in [0, T] \times \R^d, \xi \in \R^d$,
		\begin{equation}
			\label{eq:sigmaElliptic}
			\xi^\top \sigma\sigma^\top(t, x)\xi \ge c_\sigma |\xi|^2.
		\end{equation}
		\item For all $x \in \R^d$,
		$$
		\lim_{y \rightarrow x} \sup_{0 \le r \le T} \|\sigma(r, x) - \sigma(r, y)\| = 0.
		$$
	\end{enumerate}
\end{hyp}
Let us define the admissible set of probability measures $\Pma_\U$ for Problem \eqref{eq:controlProblemIntro}.
 \begin{definition}
 	\label{def:PU}
 	Let $\Pma_\U$ be the set of probability measures on $(\Omega, \F)$ such that, for all $\P \in \Pma_\U$, under $\P$, the canonical process decomposes as
 	\begin{equation} \label{eq:decompP}
 		X_t = x + \int_0^t b(r, X_r, \nu_r^\P)dr + M_t^\P,
 	\end{equation}
 	where $x \in \R^d$,
 $M^\P$ is a $(\P, \shf_t)$-local martingale such that
 $[M^\P] = \int_0^\cdot \sigma\sigma^\top(r, X_r)dr$, $\nu^\P$ is a progressively measurable process with values in $\U$. If in addition there exists $u^\P \in \B([0, T] \times \R^d, \U)$ such that
 $\nu_t^\P = u^\P(t, X_t)$ $dt \otimes d\P$-a.e, we will denote $\P \in \Pma_\U^{Markov}$.
\end{definition}

\begin{remark} \label{rmk:volatility}
  The admissible set of probability measures $\shp_\U$ for Problem \eqref{eq:controlProblemIntro}
  imposes an uncontrolled volatility. Indeed the approach developed in the present paper relies on Girsanov's
  theorem and can not be easily extended to the case of controlled volatility.
\end{remark}

\begin{remark}
	\label{rmk:stroockVaradhan}
        By classical stochastic calculus arguments, see e.g. Proposition 5.4.6 in \cite{karatshreve},
        we can state the following.
	If $\P \in \Pma_\U^{Markov}$ in the sense of Definition \ref{def:PU},
	then the following equivalent properties hold.
	\begin{enumerate}
		\item One has
                 \begin{equation} \label{eq:decompPBis}
			X_t = x + \int_0^t b(r, X_r, u^\P(r, X_r))dr + M_t^\P,
		\end{equation}
		with $x \in \R^d$, $[M^\P] = \int_0^\cdot \sigma\sigma^\top(r, X_r)dr$.
              \item $\P$ is solution of the \textit{martingale problem} (in the sense of Stroock and Varadhan in \cite{stroock}) associated with the initial condition $(0, x)$ and the operator $\shl_{u^\P}$ defined, for all
                bounded functions $\phi \in C_b^{1, 2}([0, T] \times \R^d, \R)$, $(t, y) \in [0, T] \times \R^d$, by
		\begin{equation}
			\label{eq:generatorU}
			\shl_{u^\P}\phi(t, y) = \partial_t \phi(t, y) + \langle \nabla_x\phi(t, y), b(t, y, u^\P(t, y))\rangle + \frac{1}{2}Tr[\sigma\sigma^\top(t, y)\nabla_x^2\phi(t, y)],
		\end{equation}
		with $\nu^\P := u^\P(\cdot, X_\cdot)$
                in \eqref{eq:decompP}.
		\item $\P$ is a solution (in law) of
		\begin{equation} \label{eq:decompPTer}
			X_t = x + \int_0^t b(r, X_r, u^\P(r, X_r))dr +
			\int_0^t \sigma(r, X_r) dW_r,
		\end{equation}
		for some suitable Brownian motion $W$.
	\end{enumerate}     
      \end{remark}
%{\bf FIN VERIFICATIONS }
      
We will often make use of the following proposition.
\begin{prop}
	\label{prop:existencePu}
	Assume Hypothesis \ref{hyp:coefDiffusion} holds. Let $u \in \B([0, T] \times \R^d, \U)$. There exists a unique probability measure $\P^u \in \Pma_\U^{Markov}$ such that under $\P^u$ the canonical process decomposes as \eqref{eq:decompP}, with $\nu^\P_t = u(t, X_t) (= u^\P(t, X_t))$.
%	solution to the martingale problem 
%  with initial condition $(0, x)$, and the operator $\shl_u$ defined in \eqref{eq:generatorU},
%  % in the sense of Remark \ref{rmk:stroockVaradhan}.
%  with
%  $u^\P := u$.
      \end{prop}
\begin{remark}
  \label{rmk:uniqDecomp}
  In particular, for a given $u: [0,T] \times \R^d \rightarrow \R$,
  the equation
  	\begin{equation} \label{eq:decompPRem}
 		X_t = x + \int_0^t b(r, X_r, u(r,X_r))dr + M_t^\P,
              \end{equation}
 	where $x \in \R^d$, $X$ is the canonical process, and
 $M^\P$ is a $(\P, \shf_t)$-local martingale such that
 $[M^\P] = \int_0^\cdot \sigma\sigma^\top(r, X_r)dr$,
admits a unique solution $\P $.

\end{remark}
\begin{proof}[Proof of Proposition \ref{prop:existencePu}]
  By Theorem 10.1.3 in \cite{stroock}, the martingale problem,
  associated with the initial condition $(0, x)$ and the operator $\shl_u$ defined by \eqref{eq:generatorU} with $u^\P = u$, admits a unique solution $\P^u$. The result is then a consequence of Remark \ref{rmk:stroockVaradhan}.
 \end{proof}

%\begin{comment}
% \begin{prop}
% 	\label{prop:existencePu}
% 	Assume Hypothesis \ref{hyp:coefDiffusion} holds. Let $u \in \B([0, T] \times \R^d, \U)$. There exists a unique probability measure 
%         $\P^u \in \Pma_\U^{Markov}$
%         solution to the martingale problem 
%   with initial condition $(0, x)$, and the operator $\shl_u$ defined in \eqref{eq:generatorU}.
%   % in the sense of Remark \ref{rmk:stroockVaradhan}.
%   In this case, one can choose
%   $u^\P := u$.
% \end{prop}
%\begin{proof}
%  This result follows from Theorem 10.1.3 in \cite{stroock}.
%\end{proof}
%\end{comment}
 Let then $f \in \B([0, T] \times \R^d \times \U, \R), ~g \in \B(\R^d, \R)$, referred to as the
\textit{running cost} and the \textit{terminal cost} respectively, and assume that the following holds.
\begin{hyp} (Cost functions).
	\label{hyp:costFunctionsControl}
	\begin{enumerate}
        \item The functions $f, g$ are positive and there
          exist $C_{f, g} > 0$, $p \ge 1$, such that, for all $(t, x, u)
		\in [0, T] \times \R^d \times \U$,
		\begin{equation}
			\label{eq:polyGrowthfg}
			f(t, x, u) + g(x) \le C_{f, g}(1 + |x|^p).
		\end{equation}
	
		\item $f$ and $g$ are continuous in $(t, x, u) \in [0, T] \times \R^d \times \U$ and $x \in \R^d$ respectively.
		\label{item:fgContinuous}
		
              \item
Let $p \ge 1$ mentioned at item $1.$
               There exist constants $p' > p$
                  and $C' > 0$
                  such that $|u|^{p'} \le C'(1 + f(t, x, u))$, for all $(t, x, u) \in [0, T] \times \R^d \times \U$.
\end{enumerate}
\end{hyp}
For any $(t, x) \in [0, T] \times \R^d$, we introduce the set
\begin{equation}
	\label{eq:Ktx}
	K(t, x) := \left\{\vphantom{e^{||}} (b(t, x, u), z)~\middle | ~u \in \U,~z \ge f(t, x, u)\right\}.
\end{equation}

\begin{remark}
  \label{rmk:KtxClosed}
  \begin{enumerate}
  \item Item $3.$ of Hypothesis \ref{hyp:costFunctionsControl} is of course verified
    if $\U$ is bounded. %%Je ne me rappelle plus si on ne l'a pas mis ailleurs.
    %%% J'ai avancé car elle venait trop tard par rapport à
    %%ta remarque en rouge.
    % \begin{remark} \label{rmk:coercivity}
 %   \begin{enumerate}
   % \item Hypothesis \ref{hyp:costFunctionsControl} item 3. is of course fulfilled
     % if $\U$ is compact.
    \item Whenever $\U$ is unbounded, the same hypothesis implies that
      $ \vert f(t,x,u) \vert \rightarrow +\infty$ if
      $\vert u \vert$ goes to infinity.
  %\end{enumerate}
    % \end{remark}
 \item
      Under Hypothesis \ref{hyp:costFunctionsControl}, the set $K(t, x)$ is closed.
      Let indeed $((y_n, z_n))_{n \ge 0}$ be a sequence of elements of $K(t, x)$, which converges toward $(y^*, z^*) \in \R^{d + 1}$. Let $(u_n)_{n \ge 0}$ be a sequence of elements of $\U$ such that for all $n \in \N$,
	\begin{equation}
		\label{eq:ynzn}
		y_n = b(t, x, u_n) \quad \text{and} \quad z_n \ge f(t, x, u_n).
	\end{equation}
	Then, by item 3. of Hypothesis \ref{hyp:costFunctionsControl},
	$$
	\sup_{n \in \N}|u_n|^{p'} \le C'\left(1 + \sup_{n \in \N}f(t, x, u_n)\right) \le C'\left(1 + \sup_{n \in \N}z_n \right).
	$$
	Since $(z_n)_{n \in \N}$ converges, it is bounded and the previous inequality implies that $(u_n)_{n \in \N}$ is also bounded. Up to a subsequence, we can thus assume that $(u_n)_{n \in \N}$ converges towards a limit $u^* \in \U$ (recall that $\U$ is closed). Since $b(t, x, \cdot)$ and $f(t, x, \cdot)$ are continuous, letting $n \rightarrow + \infty$ in \eqref{eq:ynzn}, yields $y^* = b(t, x, u^*)$ and $z^* \ge f(t, x, u^*)$. Hence $(y^*, z^*) \in K(t, x)$, and $K(t, x)$ is closed.
\end{enumerate}
      \end{remark}

We will require the following convexity assumption.
\begin{hyp}
  \label{hyp:convexSet}(Convex).
 	For all $(t, x) \in [0, T] \times \R^d$, the set $K(t, x)$ is convex.
\end{hyp}

\begin{remark}
	\begin{enumerate}
        \item If $\U$ is convex, $b$ is linear w.r.t. to $u$ and $f$ is convex w.r.t. $u$, then Hypothesis \ref{hyp:convexSet} holds.
		\item Hypothesis \ref{hyp:convexSet} is a classical convexity assumption when one wants to prove existence of optimal Markovian control to Problem \eqref{eq:controlProblemIntro} in the weak sense, by using compactness arguments, see e.g. \cite{HaussmannCompactification, HaussmanLepeltierOptimal, ElKarouiCompactification, LackerMarkovianControl}.
	\end{enumerate}
\end{remark}

We conclude this section providing a moment estimate, see  e.g.
%Ce n'est pas celui à la fin du papier
%%% Corollary 5.12 chapter II?
Corollary 5.12 of Chapter 2 in \cite{krylov}.
%in  Section 5.2 in \cite{krylov}, which will be often used in the rest of the paper.
\begin{lemma}
  \label{lemma:classicalEstimates}
  Let $b, \sigma$ fulfill Hypothesis \ref{hyp:coefDiffusion}
  and $q \ge 1$.
  Then there is a constant $C(q)$, which depends  on $T$ and $C_{b, \sigma}$ (and $q$), such that the following holds.
  
	Let $(\Omega, \F, (\F_t), \P)$ be a filtered probability space. Let $\nu : [0, T] \times \Omega \rightarrow \U$ be an $(\F_t)$-progressively measurable process. Let $X$ be an Itô process on $(\Omega, \F, \P)$, which decomposes as
	$$
	X_t = x + \int_0^t b(r, X_r, \nu_r)dr + M_t^\P,
	$$
	where $M^\P$ is a $\P$-local martingale such that $[M^\P] = \int_0^\cdot \sigma\sigma^\top(r, X_r)dr$.
        %Let $q \ge 1$.
        %Under Hypothesis \ref{hyp:coefDiffusion} there exists a constant $C(q) > 0,$
%	which depends  on $T$ and $C_{b, \sigma}$ (and $q$), but not on $\P$
 %       such that
        %for
        % all $\P\in \Pma_{\U}$,
        Then we have
	$$
	\E^{\P}\left[\sup_{0 \le t \le T} |X_t|^q\right] \le C(q).
	$$
\end{lemma}
Under Hypotheses \ref{hyp:coefDiffusion} and \ref{hyp:costFunctionsControl},
the function $J$ introduced in \eqref{eq:controlProblemIntro} is well-defined
on the set $\Pma_\U$, set out in Definition \ref{def:PU}.
%%%THIBAUT
Indeed,
by the moment estimate given by Lemma \ref{lemma:classicalEstimates} one has 
	$$
	\E^{\P}\left[\int_0^T f(r, X_r, \nu_r^\P)dr + g(X_T)\right] < + \infty,
	$$
	for all $\P \in \Pma_{\U}$.

\subsection{The penalized optimization problem}

As mentioned in the Introduction, we reformulate Problem \eqref{eq:controlProblemIntro}, by doubling the decision variables and by adding a relative entropy term, in the objective function. The modified penalized  Problem is precisely
\eqref{eq:penalizedProblemIntro},
where $\mathcal{A}$ is the subset of elements $(\P, \Q) \in \Pma(\Omega)^2$ defined below.
\begin{definition}
	\label{def:A}
 $\mathcal{A}$ will denote the set of probability measure couples $(\P, \Q) \in \Pma(\Omega)^2$, such that
	\begin{enumerate}
		\item $\P \in \mathcal{P}_{\U}$,
		\item $H(\Q | \P) < + \infty.$
	\end{enumerate}
\end{definition}
In the perspective of solving the penalized optimization Problem \eqref{eq:penalizedProblemIntro}, we will introduce in Sections \ref{sec:pointwiseMinimization} and \ref{sec:markovDrift} two subproblems.
 The interest of the penalized formulation \eqref{eq:penalizedProblemIntro} relies on the fact that each of the subproblems, $\underset{\Q \in \shp(\Omega)}{\inf} \shj_{\epsilon}(\Q, \P)$ and $\underset{\P \in \shp_{\U}}{\inf} \shj_{\epsilon}(\Q, \P)$, can be solved by classical techniques described in the literature:
 those resolutions will constitute the two steps of our alternating minimization algorithm.

 The first subproblem, considered in Section \ref{sec:markovDrift}, is a minimization on $\Q$, the probability $\P$ being fixed, and it is related to a variational representation formula, whose solution is denominated exponential twist, see e.g. \cite{DupuisEllisLargeDeviations}.
In particular  the following result will intervene.
\begin{prop}
	\label{prop:markovExistenceMinimizer}
	Let $\vphi : \Omega \rightarrow \R$ be a Borel function and $\P \in \Pma(\Omega)$. Assume that $\vphi$ is bounded below. Then
	\begin{equation}
		\label{eq:klOpti}
	\inf_{\Q \in \Pma(\Omega)} \E^{\Q}[\vphi(X)] + \frac{1}{\epsilon} H(\Q | \P) = - \frac{1}{\epsilon}\log \E^{\P}\left[\exp(-\epsilon\vphi(X))\right].
      \end{equation}
      Moreover the problem \eqref{eq:klOpti} admits a unique solution
      (minimizer)
      $\Q^* \in \Pma(\Omega)$, given by
	$$
	d\Q^* = \frac{\exp(-\epsilon\vphi(X))}{\E^{\P}[\exp(-\epsilon\vphi(X))]}d\P.
	$$
\end{prop}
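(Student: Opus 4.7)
My plan is to prove this classical Donsker--Varadhan type representation (see e.g.\ \cite{DupuisEllisLargeDeviations}) by exhibiting the explicit minimizer, an exponential twist of $\P$, and then establishing its optimality via a second relative entropy that ``completes the square''.

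First I would construct the candidate. Since $\vphi$ is bounded below, say $\vphi \ge -M$, the normalization $Z := \E^{\P}[\exp(-\epsilon\,\vphi(X))]$ satisfies $0 < Z \le e^{\epsilon M}$, so
\[
d\Q^* \;:=\; \frac{\exp(-\epsilon\,\vphi(X))}{Z}\, d\P
\]
defines a probability measure equivalent to $\P$. Writing $\rho^* := d\Q^*/d\P$, one has $\log \rho^* = -\epsilon\,\vphi - \log Z$, which is bounded above. Combining this with the elementary bound $x\log x \ge -1/e$, one checks that $H(\Q^* | \P)$ is well-defined and finite, and a direct substitution yields
\[
\E^{\Q^*}[\vphi(X)] + \tfrac{1}{\epsilon} H(\Q^* | \P) \;=\; -\tfrac{1}{\epsilon}\log Z.
\]

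The crux of the argument is a key identity valid for any $\Q \in \Pma(\Omega)$ with $\Q \ll \P$ and $H(\Q | \P) < +\infty$. Writing $\rho := d\Q/d\P$, we have $\Q \ll \Q^*$ with density $\rho/\rho^*$, and
\[
\log \rho \;=\; \log(\rho/\rho^*) + \log \rho^* \;=\; \log(\rho/\rho^*) - \epsilon\,\vphi - \log Z.
\]
Taking expectation under $\Q$ and rearranging gives
\[
\E^{\Q}[\vphi(X)] + \tfrac{1}{\epsilon} H(\Q | \P) \;=\; \tfrac{1}{\epsilon} H(\Q | \Q^*) - \tfrac{1}{\epsilon} \log Z.
\]
Since $H(\Q | \Q^*) \ge 0$ with equality iff $\Q = \Q^*$, the infimum is attained uniquely at $\Q^*$ with the announced value. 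For $\Q$ with $\Q \not\ll \P$ one has $H(\Q | \P) = +\infty$ by Definition \ref{def:klDiv}, so such $\Q$ clearly cannot be optimal.

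The only real subtlety, and what I expect to be the main technical point, is ensuring that no indeterminate $+\infty - \infty$ arises when rearranging the identity above. Since $\vphi \ge -M$, the quantity $\E^{\Q}[\vphi(X)]$ is well-defined in $(-\infty, +\infty]$; if it equals $+\infty$, the claimed inequality holds trivially, while if it is finite, every term in the identity is finite and the algebraic manipulation is rigorous. A parallel argument using boundedness of $x \mapsto x\, e^{-\epsilon x}$ on $[-M, +\infty)$ handles the integrability of $\vphi$ under $\Q^*$. Beyond these integrability checks the proof is essentially algebraic once the exponential twist has been identified.
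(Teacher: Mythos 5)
Your proof is correct, and it is genuinely different in character from what the paper does: the paper's proof is a two-line citation (it checks that $\vphi(X)$ bounded below gives condition $(FE)$ of the reference and then invokes Proposition 2.5 of that paper), whereas you give the full self-contained classical argument --- exhibit the exponential twist $\Q^*$, write $\log\rho = \log(\rho/\rho^*) + \log\rho^*$, take $\E^{\Q}$, and read off $\E^{\Q}[\vphi(X)] + \tfrac{1}{\epsilon}H(\Q\,|\,\P) = \tfrac{1}{\epsilon}H(\Q\,|\,\Q^*) - \tfrac{1}{\epsilon}\log Z$. Your handling of the integrability issues is the right one: $\vphi \ge -M$ makes $\E^{\Q}[\vphi(X)]$ well defined in $(-\infty,+\infty]$, the bound $x\log x \ge -1/e$ makes $H$ well defined, the boundedness of $x\mapsto |x|e^{-\epsilon x}$ on $[-M,+\infty)$ gives $\E^{\Q^*}[|\vphi(X)|]<+\infty$ and $H(\Q^*\,|\,\P)<+\infty$, and restricting the rearranged identity to $\Q$ with $H(\Q\,|\,\P)<+\infty$ and $\E^{\Q}[\vphi(X)]<+\infty$ avoids any $+\infty-\infty$; all remaining $\Q$ (including $\Q\ll\P$ with infinite entropy, a case you only implicitly cover but which is equally trivial) give objective value $+\infty$. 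What your computation buys, beyond self-containedness, is a sanity check on the statement itself: your optimal value is $-\tfrac{1}{\epsilon}\log\E^{\P}[\exp(-\epsilon\,\vphi(X))]$, which is the value consistent with the stated minimizer and with \eqref{eq:optimalValueMinQ} later in the paper, whereas the right-hand side of \eqref{eq:klOpti} as printed ($-\log\E^{\P}[\exp(-\vphi(X))]$) is missing both factors of $\epsilon$ and is evidently a typo.
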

\begin{proof}
	The random variable $\vphi(X)$ is bounded below, hence satisfies condition $(FE)$ of \cite{EntropyWeighted}. The statement then follows from Proposition 2.5 in \cite{EntropyWeighted}.
\end{proof}
Applying Proposition \ref{prop:markovExistenceMinimizer} to our framework for $\P \in \Pma_\U$ and $\vphi(X) := \int_0^T f(r, X_r, \nu_r^\P)dr + g(X_T)$, we get that, under Hypothesis \ref{hyp:costFunctionsControl}, the subproblem $\underset{\Q \in \Pma(\Omega)}{\inf} \shj_\epsilon(\Q, \P)$ admits a unique solution $\Q^*$
given by
\begin{equation}
	\label{eq:minimizerGeneralOptimizationProblem}
	d\Q^* = \frac{\exp\left(-\epsilon\int_0^T f(r, X_r, \nu_r^\P)dr - \epsilon g(X_T)\right)}{\E^{\P}\left[\exp\left(-\epsilon\int_0^T  f(r, X_r, \nu_r^\P)dr - \epsilon g(X_T)\right)\right]}d\P,
\end{equation}
and that its optimal value is
\begin{equation}
	\label{eq:optimalValueMinQ}
	\shj_\epsilon(\Q^*, \P) = - \frac{1}{\epsilon}\log \E^{\P}\left[\exp\left(-\epsilon\int_0^T  f(r, X_r, \nu_r^\P)dr - \epsilon g(X_T)\right)\right].
\end{equation}
This subproblem is further analyzed in Section \ref{sec:markovDrift}. In particular Proposition \ref{prop:markovianDrift} allows to identify $\Q^*$ as the law of a semimartingale with Markovian drift.

%%% Il me semble que ce n'est pas mis ailleurs.
\begin{remark} \label{rmk:OptVal}
  Suppose the validity of Hypothesis \ref{hyp:costFunctionsControl}.
  Then $ \|d\Q^*/d\P\|_\infty < + \infty.$
  \end{remark}

Let us discuss now about the second problem, i.e.
the subproblem  $\underset{\P \in \Pma_{\U}}{\inf} \shj_\epsilon(\Q, \P)$,
which will be the object of
% This one, will be studied in
Section \ref{sec:pointwiseMinimization}. This is a minimization on $\P$, the probability $\Q$ remaining unchanged.
%The solution arises via a pointwise real minimization providing a function $u \in \B([0, T] \times \R^d, \U)$ and $\P$ is the optimal probability $ \P^u$ in the sense
%        of Proposition \ref{prop:existencePu}, so that we can write
%        $u = u^\P$.
The optimal solution of this subproblem is a probability measure $\P^u$, in the sense of Proposition \ref{prop:existencePu}, where the function $u \in \B([0, T] \times \R^d, \U)$, is provided by a pointwise minimization.

The next theorem proves that the penalized Problem \eqref{eq:penalizedProblemIntro} has  a Markovian solution.

\begin{theorem}
  \label{th:existenceSolutionRegProb}
  Assume Hypotheses \ref{hyp:coefDiffusion}, \ref{hyp:costFunctionsControl} and \ref{hyp:convexSet} hold.
  Then the penalized Problem \eqref{eq:penalizedProblemIntro} has a {\it solution} $(\P^*_\epsilon, \Q^*_\epsilon) \in \sha$, in the sense that $\shj^*_\epsilon = \shj(\Q^*_\epsilon, \P^*_\epsilon).$ Moreover, under $\P^*_\epsilon$, the canonical process is a Markov process and $\nu^{\P_\epsilon^*}$, related to $\P_\epsilon^*$ by Definition \ref{def:PU}, is such that $\nu^{\P^*_\epsilon}_t  (= u^{\P_\epsilon^*}_\epsilon(t, X_t)) = u^*_\epsilon(t, X_t)$, for some function
  $u^*_\epsilon \in \shb([0, T] \times \R^d, \U)$ and we also have
	\begin{equation}
		\label{eq:optimalSolutionDensity}
		d\Q_{\epsilon}^* = \frac{\exp\left(-\epsilon \int_0^T f(r, X_r, u_\epsilon^*(r, X_r))dr - \epsilon g(X_T)\right)}{\E^{\Q_\epsilon^*}\left[\exp\left(-\epsilon \int_0^T f(r, X_r, u_\epsilon^*(r, X_r))dr - \epsilon g(X_T)\right)\right]}d\P_\epsilon^*.
	\end{equation}
\end{theorem}

\noindent
The proof of this result relies on several technical lemmas. For the convenience of the reader, it is postponed to
Appendix \ref{app:proofThEx}.

The following proposition justifies the use of the penalized Problem \eqref{eq:penalizedProblemIntro} to
approximately solve the initial stochastic optimal control Problem \eqref{eq:controlProblemIntro}.
Indeed, the next result states that one can build an approximate solution of Problem
\eqref{eq:controlProblemIntro},
based on an approximate solution of Problem \eqref{eq:penalizedProblemIntro}

\begin{prop}
  \label{prop:approximateControl} We suppose the validity of Hypothesis \ref{hyp:coefDiffusion} and item $1.$ of Hypothesis
  \ref{hyp:costFunctionsControl}. Let $\epsilon >0, \epsilon' \ge 0$ and let   $\P_\epsilon^{\epsilon'}$ be the first component of an $\epsilon'$-solution of Problem \eqref{eq:penalizedProblemIntro}, in the sense of Definition \ref{def:MinSeq}, with $E = \sha.$
  We set $Y^{\epsilon'}_\epsilon := \int_0^T f(r, X_r, \nu^{\epsilon'}_\epsilon) dr
%%%  Il y avait \nu^{\epsilon'}_\epsilon(r, X)
  + g(X_T),$ where $\nu^{\epsilon'}_\epsilon$ corresponds to the $\nu^{\P_\epsilon^{\epsilon'}}$, appearing in decomposition \eqref{eq:decompP}. Then the following holds.
	\begin{enumerate}
				\item There is a constant $C^*$ depending only on $C_{b, \sigma}, C_{f, g}, p, d, T$ of Hypothesis \ref{hyp:costFunctionsControl} $1.$ such that\\ $\max(\E[Y_\epsilon^{\epsilon'}], Var^{\P^{\epsilon'}_\epsilon}[Y_\epsilon^{\epsilon'}]) \le C^*,$ where  $Var^{\P^{\epsilon'}_\epsilon}[Y^{\epsilon'}_\epsilon]$ denotes the variance of $Y^{\epsilon'}_\epsilon$ under ${\P^{\epsilon'}_\epsilon}$.
	
		\item We have
		$$
		0 \le J(\P_\epsilon^{\epsilon'}) - J^* \le \epsilon e^{\epsilon \E[Y_\epsilon^{\epsilon'}]} Var^{\P^{\epsilon'}_\epsilon}[Y^{\epsilon'}_\epsilon]  + \epsilon',
		$$
	\end{enumerate}
	where we recall that $J$ and $J^*$ are defined in \eqref{eq:controlProblemIntro}.
\end{prop}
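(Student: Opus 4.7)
The plan is to handle the two assertions of the Proposition separately.

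For Part 1, since $\P^{\epsilon'}_\epsilon \in \Pma_\U$ by Definition \ref{def:A}, the moment estimate of Lemma \ref{lemma:classicalEstimates} applied with $q = 2p$ gives $\E^{\P^{\epsilon'}_\epsilon}[\sup_{t \in [0,T]}|X_t|^{2p}] \le C_\U(2p)$. Combined with the polynomial-growth bound $|f(t,x,u)| + |g(x)| \le C_{f,g}(1+|x|^p)$ from Hypothesis \ref{hyp:costFunctionsControl} 1., this yields a pathwise estimate $|Y^{\epsilon'}_\epsilon| \le C_{f,g}(T+1)(1+\sup_t |X_t|^p)$ and, after squaring and integrating, $\E^{\P^{\epsilon'}_\epsilon}[(Y^{\epsilon'}_\epsilon)^2] \le C^*$ for a constant $C^*$ depending only on the parameters listed in the statement. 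The bound on the variance follows from $Var \le \E[Y^2]$.

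For Part 2, the lower bound $J(\P^{\epsilon'}_\epsilon) \ge J^*$ is immediate from $\P^{\epsilon'}_\epsilon \in \Pma_\U$. For the upper bound I would chain three inequalities. First, for any $\P \in \Pma_\U$, the pair $(\P, \P)$ belongs to $\sha$ (since $H(\P|\P) = 0$) and $\shj_\epsilon(\P, \P) = J(\P)$, so taking the infimum over $\P$ gives $\shj_\epsilon^* \le J^*$. Second, the $\epsilon'$-optimality of $(\P^{\epsilon'}_\epsilon,\Q^{\epsilon'}_\epsilon)$ yields $\shj_\epsilon(\Q^{\epsilon'}_\epsilon, \P^{\epsilon'}_\epsilon) \le \shj_\epsilon^* + \epsilon' \le J^* + \epsilon'$. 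Third, Proposition \ref{prop:markovExistenceMinimizer}, equivalently formula \eqref{eq:optimalValueMinQ}, gives $\shj_\epsilon(\Q, \P^{\epsilon'}_\epsilon) \ge -\frac{1}{\epsilon}\log \E^{\P^{\epsilon'}_\epsilon}[\exp(-\epsilon Y^{\epsilon'}_\epsilon)]$ for every $\Q$. Combining these,
\begin{equation*}
-\frac{1}{\epsilon}\log \E^{\P^{\epsilon'}_\epsilon}\bigl[\exp(-\epsilon Y^{\epsilon'}_\epsilon)\bigr] \le J^* + \epsilon'.
\end{equation*}

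The main technical step, which I expect to be the only real obstacle, is the second-order comparison
\begin{equation*}
\E^{\P^{\epsilon'}_\epsilon}[Y^{\epsilon'}_\epsilon] \le -\frac{1}{\epsilon}\log \E^{\P^{\epsilon'}_\epsilon}\bigl[\exp(-\epsilon Y^{\epsilon'}_\epsilon)\bigr] + \frac{\epsilon}{2} Var^{\P^{\epsilon'}_\epsilon}(Y^{\epsilon'}_\epsilon).
\end{equation*}
To derive it I would Taylor-expand $x \mapsto \exp(-\epsilon x)$ around $\mu := \E^{\P^{\epsilon'}_\epsilon}[Y^{\epsilon'}_\epsilon] \ge 0$ and exploit non-negativity of $Y^{\epsilon'}_\epsilon$ (from Hypothesis \ref{hyp:costFunctionsControl} 1.): since $\mu, Y^{\epsilon'}_\epsilon \ge 0$, any intermediate point $\xi$ between them satisfies $e^{-\epsilon \xi} \le 1$, bounding the quadratic remainder by $\frac{\epsilon^2}{2}(Y^{\epsilon'}_\epsilon - \mu)^2$ and yielding
\begin{equation*}
\E^{\P^{\epsilon'}_\epsilon}\bigl[\exp(-\epsilon Y^{\epsilon'}_\epsilon)\bigr] \le \exp(-\epsilon \mu) + \frac{\epsilon^2}{2} Var^{\P^{\epsilon'}_\epsilon}(Y^{\epsilon'}_\epsilon).
\end{equation*}
Taking $-\frac{1}{\epsilon}\log$ and using $\log(1+u) \le u$ delivers the comparison, and combining it with the preceding bound finishes Part 2.
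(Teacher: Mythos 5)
Your Part 1 and the skeleton of Part 2 coincide with the paper's proof: the paper also bounds the variance by $\E[(Y^{\epsilon'}_\epsilon)^2]$ via Lemma \ref{lemma:classicalEstimates} and the growth condition, and in Part 2 it uses exactly your three inequalities (the diagonal embedding $\Q=\P$ giving $\shj_\epsilon^*\le J^*$, the $\epsilon'$-optimality, and the log-Laplace value \eqref{eq:optimalValueMinQ} of the inner minimization in $\Q$), combined with the second-order comparison between $\E[Y^{\epsilon'}_\epsilon]$ and $-\frac{1}{\epsilon}\log\E[e^{-\epsilon Y^{\epsilon'}_\epsilon}]$, which the paper isolates as Lemma \ref{lemma:squareIntVar}.

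The one step that does not close as written is the last line of your derivation of that comparison. Your Taylor expansion around $\mu:=\E^{\P^{\epsilon'}_\epsilon}[Y^{\epsilon'}_\epsilon]$ is correct and gives $\E[e^{-\epsilon Y}]\le e^{-\epsilon\mu}+\frac{\epsilon^2}{2}\,Var(Y)$. But to apply $\log(1+u)\le u$ you must first factor out $e^{-\epsilon\mu}$, which turns the right-hand side into $e^{-\epsilon\mu}\bigl(1+\frac{\epsilon^2}{2}e^{\epsilon\mu}Var(Y)\bigr)$; taking $-\frac{1}{\epsilon}\log$ then yields $\mu\le -\frac{1}{\epsilon}\log\E[e^{-\epsilon Y}]+\frac{\epsilon}{2}e^{\epsilon\mu}\,Var(Y)$, with an extra factor $e^{\epsilon\mu}\ge 1$ (since $\mu\ge 0$) that your argument cannot remove. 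This still produces a bound of order $\epsilon$, because $\mu$ is controlled uniformly by the estimates of Part 1, but it is not the constant $\frac{\epsilon}{2}Var^{\P^{\epsilon'}_\epsilon}(Y^{\epsilon'}_\epsilon)$ asserted in the statement. The paper's Lemma \ref{lemma:squareIntVar} avoids this factor by expanding the centered exponential $e^{-\epsilon(\eta-\E[\eta])}$ around $0$; note, however, that the pointwise inequality $e^{-b}\le 1-b+\frac{b^2}{2}$ invoked there is valid only for $b\ge 0$, so the centered expansion has its own one-sided defect on the event $\{\eta<\E[\eta]\}$. Your choice of expansion point is precisely the one that makes the Taylor remainder genuinely controllable, but at the price of the weaker constant; as it stands, neither route delivers the exact constant claimed without further argument.
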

\begin{remark} \label{remark:approximateControl}
	\begin{enumerate}
		\item Let $(\P_\epsilon^*, \Q_\epsilon^*)$ be a solution of Problem \eqref{eq:penalizedProblemIntro} given by Theorem \ref{th:existenceSolutionRegProb}. Applying item 2. of Proposition \ref{prop:approximateControl} with $\epsilon' = 0$ implies that $\P_\epsilon^*$ is an $\epsilon e^{\epsilon \E[Y_\epsilon^{0}]} Var^{\P^*_\epsilon}[Y^{0}_\epsilon]$-solution of the original Problem \eqref{eq:controlProblemIntro}.
		\item By definition of infimum, for $\epsilon' > 0$, the existence of an $\epsilon'$-solution is always guaranteed without any convex assumption on the running cost $f$ w.r.t. the control variable.
		\item In the sequel, assuming Hypotheses \ref{hyp:costFunctionsControl} and \ref{hyp:convexSet} to hold, we will propose an algorithm providing a sequence of $\epsilon'_n$-solutions of the penalized Problem \eqref{eq:penalizedProblemIntro}, where $\epsilon'_n \rightarrow 0$ as $n \rightarrow + \infty$. This will also provide a sequence of
%%%Il manque un \epsilon'_n
                  $(\epsilon e^{\epsilon \E[Y_\epsilon^{\epsilon_n'}]} Var^{\P^{\epsilon'_n}_\epsilon}[Y^{\epsilon'_n}_\epsilon] + \epsilon'_n)$-solutions to the original Problem \eqref{eq:controlProblemIntro} (with a fixed $\epsilon > 0$).
    %%% Ne faudrait-il pas mettre une version de epsilon solution
    %%% en tenant compte des bornes uniformes de la variance et des
     %%%%%%% espérances?
                \end{enumerate}
                
      \end{remark}

%%% Ne doit-on pas inverser les items?      
\begin{proof}[Proof of Proposition \ref{prop:approximateControl}.]
	We first prove item $1.$
Let $(\P_{\epsilon}^{\epsilon'},\Q_\epsilon^{\epsilon'})$ be an $\epsilon'$-solution of Problem  
	\eqref{eq:penalizedProblemIntro}.
  By Hypothesis \ref{hyp:costFunctionsControl}, for all $\epsilon > 0$,
         on the one hand one has
        $$
        \E[Y_\epsilon^{\epsilon'}] \le C_{f, g}(T + 1)\left(1 + \E^{\P_\epsilon^{\epsilon'}}\left[\underset{0 \le t \le T}{\sup} |X_t|^p\right]\right),
        $$
        and on the other hand,
	$$
	Var^{\P_\epsilon^{\epsilon'}}[Y^{\epsilon'}_\epsilon] \le \E^{\P_\epsilon^{\epsilon'}}\left[(Y^{\epsilon'}_\epsilon )^2\right] \le 4C_{f, g}^2(T^2 + 1)\left(1 + \E^{\P_\epsilon^*}\left[\sup_{0 \le t \le T}|X_t|^{2p}\right]\right).
	$$  
	Combining these inequalities with Lemma \ref{lemma:classicalEstimates} implies the  existence of a constant $C^*$, depending only on $C_{b, \sigma}, C_{f, g}, p, d, T$,
        such that $\max(\E[Y_\epsilon^{\epsilon'}], Var^{\P_\epsilon^{\epsilon'}}[Y_\epsilon^{\epsilon'}]) \le C^*$, which is the statement of item $1.$

 We go on with the proof of item $2.$ First, a direct application of Lemma \ref{lemma:squareIntVar} with $\eta = Y_{\epsilon}^{\epsilon'}$, yields    
	\begin{equation}
		\label{eq:upperBoundVar}
		0 \le \E^{\P_\epsilon^{\epsilon'}}[Y^{\epsilon'}_\epsilon ] - \left(-\frac{1}{\epsilon}\log \E^{\P_\epsilon^{\epsilon'}}[\exp(-\epsilon Y^{\epsilon'}_\epsilon )]\right) \le \frac{\epsilon}{2}Var^{\P_\epsilon^{\epsilon'}}[Y^{\epsilon'}_\epsilon ]
                e^{\epsilon \E^{\P_\epsilon^{\epsilon'}}[Y^{\epsilon'}_\epsilon]}
                .
              \end{equation}
             Let then $\tilde \Q$ be the solution of $\underset{\Q \in \Pma(\Omega)}{\inf} \shj_\epsilon(\Q, \P_\epsilon^{\epsilon'})$, given by
              \eqref{eq:minimizerGeneralOptimizationProblem}, replacing $\P$ with $\P_\epsilon^{\epsilon'}$.
               Consequently, by \eqref{eq:optimalValueMinQ} $\shj_\epsilon(\tilde \Q, \P_\epsilon^{\epsilon'}) = -\frac{1}{\epsilon}\log \E^{\P_\epsilon^{\epsilon'}}[\exp(-\epsilon Y^{\epsilon'}_\epsilon )]$;  replacing the right-hand side of previous expression
              with $\shj_\epsilon(\tilde \Q, \P_\epsilon^{\epsilon'})$
              in \eqref{eq:upperBoundVar}, we get
	\begin{equation}
		\label{eq:interEpsilonOpti1}
	0 \le \E^{\P_\epsilon^{\epsilon'}}[Y^{\epsilon'}_\epsilon ] - \shj_\epsilon(\tilde \Q, \P_\epsilon^{\epsilon'}) \le \epsilon e^{\epsilon \E[Y_\epsilon^{\epsilon'}]} Var^{\P^{\epsilon'}_\epsilon}[Y^{\epsilon'}_\epsilon].
	\end{equation}
	Let $\Q_\epsilon^{\epsilon'}$ be the second component of the $\epsilon'$-solution of Problem
	\eqref{eq:penalizedProblemIntro}, mentioned in the statement of
        the current proposition.
        %\ref{prop:approximateControl}.
        %%% C'est l'énoncé de la proposition en cours.
        Observe that $\shj_\epsilon(\tilde \Q, \P_\epsilon^{\epsilon'}) \le \shj_\epsilon(\Q_\epsilon^{\epsilon'}, \P_\epsilon^{\epsilon'}) \le \shj^*_\epsilon + \epsilon'$. Besides,  Problem \eqref{eq:controlProblemIntro}
is equivalent to Problem 	\eqref{eq:penalizedProblemIntro}, under the constraint
% rewrites $\underset{(\P, \Q) \in \sha}{\inf} \shj_\epsilon(\Q, \P)~\text{s.t.}
$\Q = \P$, therefore $\shj_\epsilon^* \le J^*$.
%%% Quelquechose m'a échappé. C'est la convention 0/0? Est-ce que ça passe bien à la limite?
        Then
	\begin{equation}
		\label{eq:interEpsilonOpti2}
	\shj_\epsilon(\tilde \Q, \P_\epsilon^{\epsilon'}) - J^* \le \shj_\epsilon^* + \epsilon' - J^* \le \epsilon'.
	\end{equation}
	Using \eqref{eq:interEpsilonOpti1} and \eqref{eq:interEpsilonOpti2} finally yields
	\begin{equation}
		\label{eq:ineqVariance}
		0 \le J(\P_\epsilon^{\epsilon'}) - J^* = \E^{\P_\epsilon^{\epsilon'}}[Y^{\epsilon'}_\epsilon ] - \shj_\epsilon(\tilde \Q, \P_\epsilon^{\epsilon'}) + \shj_\epsilon(\tilde \Q, \P_\epsilon^{\epsilon'}) - J^* \le \epsilon e^{\epsilon \E[Y_\epsilon^{\epsilon'}]} Var^{\P^{\epsilon'}_\epsilon}[Y^{\epsilon'}_\epsilon] + \epsilon'.
	\end{equation}
        This concludes the proof of item $2.$
      \end{proof}

\section{Alternating minimization procedure} \label{sec:algo}

\setcounter{equation}{0}
From now on, $\epsilon$ will be implicit in the cost function $\shj_\epsilon$ to alleviate notations.
In this section we will assume Hypotheses \ref{hyp:coefDiffusion}, \ref{hyp:costFunctionsControl} and \ref{hyp:convexSet}. 
We present an alternating procedure for solving 
the penalized Problem \eqref{eq:penalizedProblemIntro}. Let $(\P_0, \Q_0) \in \mathcal{A}$. We will define a sequence $(\P_k, \Q_k)_{k \ge 0}$ satisfying the alternating minimization procedure
\begin{equation}
	\label{eq:alternateMinimizationProcedure}
	\Q_{k + 1} = \underset{\Q \in \Pma(\Omega)}{\argmin}~\shj(\Q, \P_{k}), \quad \P_{k + 1} \in \underset{\P \in \Pma_{\U}}{\argmin}~\shj(\Q_{k + 1}, \P).
\end{equation}

\subsection{Convergence result}
\label{sec:convergenceResult}

The convergence of alternating minimization algorithms has been extensively studied in particular in Euclidean spaces.
In general the proof of convergence results requires joint convexity and smoothness properties of the objective function,
see \cite{BeckFirstOrder}. The major difficulty in our case is that the convexity only holds w.r.t $\Q$ (in fact the set $\Pma_\U$ is not even convex).
To prove the convergence we need to rely on techniques which exploit the properties of the entropic penalization. Let us first assume that the initial probability measure $\P_0 \in \Pma_\U$ is Markovian in the following sense.
\begin{hyp}
	\label{hyp:initialPoint}
	$\P_0 \in \Pma_\U^{Markov}$, see Definition \ref{def:PU}.
          In particular, there exists $u^0 (= u^{\P_0}) \in \B([0, T] \times \R^d, \U)$,
           such that $\P_0 = \P^{u^{0}}$.
      \end{hyp}
      Let $\sigma^{-1}$ be the generalized
                      right-inverse of $\sigma$,
                      i.e.
                $\sigma^\top (\sigma \sigma^\top)^{-1}$.
For a fixed Borel function $\beta: [0,T]\times \R^d \rightarrow \R^d$,
we set 
\begin{equation}
	\label{eq:fBeta}
	F_{\beta} : (t, x, u) \in [0, T] \times \R^d \times \U \mapsto f(t, x, u) +
        \frac{1}{2\epsilon}|\sigma^{-1}(t, x)(\beta(t, x) - b(t, x, u))|^2.
      \end{equation}
Given $(t, x) \in [0, T] \times \R^d$, we furthermore introduce the function
\begin{equation}
	\label{eq:barFbeta}
	(y, z) \in \R^d \times \R  \mapsto \bar F^{t, x}_\beta(y, z) := z + \frac{1}{2 \epsilon}|\sigma^{-1}(t, x)(\beta(t, x) - y)|^2.
\end{equation}
We state a lemma which will be used several times in this article and it will be proved in the Appendix \ref{app:proofEstimate}.
 \begin{lemma}\label{lemma:infKtx}
	Let $(t, x) \in [0, T] \times \R^d$. The following holds.
	\begin{enumerate}
        \item The function $\bar F^{t, x}_\beta$, defined by \eqref{eq:barFbeta}, restricted to $K(t,x)$,
          has a unique minimum  $(y^*, z^*)$, which verifies
          \begin{equation}
          	\label{eq:firstOrder}
          	z - z^* + \frac{1}{\epsilon}\langle (\sigma^{-1})^\top\sigma^{-1}(t, x) (y^* - \beta(t, x)), y - y^* \rangle \ge 0 \quad  \forall (y, z) \in K(t, x).
          \end{equation}

        \item
          \begin{enumerate}
            \item
              Let $u^* \in \U$ such that $y^* = b(t, x, u^*)$ and $z^* \ge f(t, x, u^*)$. Then
              \begin{equation} \label{eq;ustarA}
                u^* \in \underset{a \in \U}{\argmin}~F_\beta(t, x, a),
\end{equation}
                where
$F_\beta$ was defined in  \eqref{eq:fBeta}.
        \item Conversely, if $u^* \in \underset{a \in \U}{\argmin}~F_\beta(t, x, a)$, then
            \begin{equation} \label{eq:ustarB}
              (y^*,z^*):=\left(b(t, x, u^*), f(t, x, u^*)\right) \in \underset{(y, z) \in K(t, x)}{\argmin}~\bar F^{t, x}_\beta(y, z).
              \end{equation}
\end{enumerate}
        \end{enumerate} 
      \end{lemma}

\begin{remark}
	\label{rmk:fBeta}
	Let $\hat \beta: [0, T] \times \Omega \rightarrow \R^d$ be a path-dependent function. We extend the definition \eqref{eq:fBeta} of $F_\beta$, by setting
	\begin{equation} \label{eq:rmkFbeta}
	\hat F_{\hat \beta}(t,X ,u) := f(t, X_t, u) + \frac{1}{2\epsilon}|\sigma^{-1}(t, X_t)(\hat \beta(t, X) - b(t, X_t, u))|^2.
	\end{equation}
    We remark that, whenever
    $\hat u : [0, T] \times \Omega \rightarrow \U$, $\hat u(t,X) =  u(t,X_t)$, and $\hat \beta(t,X) = \beta(t,X_t)$, we have
    $$
    F_\beta(t,X_t,u(t,X_t)) = {\hat F}_{\hat \beta}(t,X,\hat u(t,X)).
    $$
\end{remark}
Let $\P_0$
% $\P_0 \in \Pma_\U$
satisfying Hypothesis \ref{hyp:initialPoint}. We set $\Q_0 = \P_0$. We build a sequence $(\P_k, \Q_k)_{k \ge 0}$ of elements of $\sha$, according to the following procedure. Let $k \ge 1$.
\begin{itemize}
	\item Let
	\begin{equation}
		\label{eq:defQk}
		d\Q_{k + 1} := \frac{\exp\left(-\epsilon\int_0^T f(r, X_r, u^{k}(r, X_r))dr - \epsilon g(X_T)\right)}{\E^{\P_{k}}\left[\exp\left(-\epsilon\int_0^T f(r, X_r, u^{k}(r, X_r))dr - \epsilon g(X_T)\right)\right]}d\P_{k},
              \end{equation}
              where $u^k = u^{\P_k}$. 
	By Proposition \ref{prop:markovianDrift}
%%  Cette proposition mentionne le caractèr L^q du drift. Où est-il utilisé ici?
        below there exists a measurable
	function $\beta^{k + 1} : [0, T] \times \R^d \rightarrow \R^d$ such that,
        under $\Q_{k + 1}$, the canonical process decomposes as
	\begin{equation}
		\label{eq:decompQn}
		X_t = x + \int_0^t \beta^{k + 1}(r, X_r)dr + M_t^{\Q_{k + 1}},
	\end{equation}
	where $M^{\Q_{k + 1}}$ is a local $\Q_{k + 1}$-martingale such that $[ M^{\Q_{k + 1}}] = \int_0^{\cdot} \sigma\sigma^\top(r, X_r)dr$.
	
      \item 
        By Proposition \ref{prop:pointwiseMinimization} there exists a Borel function $u^{k+1}: [0,T] \times \R^d \rightarrow \U$,
        such that
        \begin{equation} \label{eq:defPk}
          (t, x) \mapsto u^{k + 1}(t, x) \in \argmin_{a \in \U} F_{\beta^{k + 1}}(t, x, a),
          \end{equation}
       where $F_{\beta^{k + 1}}$ is given by \eqref{eq:fBeta}.
We define $\P_{k + 1} := \P^{u^{k + 1}}$ according to 
   Proposition \ref{prop:existencePu}, so that  
under $\P_{k + 1}$ the canonical process decomposes as
	\begin{equation}
		\label{eq:decompPn}
		X_t = x + \int_0^t b(r, X_r, u^{k + 1}(r, X_r))dr + M_t^{\P_{k + 1}},
	\end{equation}
	where $M^{\P_{k + 1}}$ is a local $\P_{k +1}$-martingale such that $[M^{\P_{k + 1}}] = \int_0^{\cdot} \sigma\sigma^\top(r, X_r)dr$. In particular $u^{k+1}= u^{\P_{k +1}}$. 
      \end{itemize}
      %%%THIBAUT Recopier la remarque
	The proof of the lemma below is a direct application of Proposition \ref{prop:markovianDrift} for item $1.$ Item $2.$ follows from Propositions \ref{prop:pointwiseMinimization} and \ref{prop:existencePu}.
\begin{lemma}
	\label{lemma:sequenceAlternateDirection}
	Let $\P_0 = \Q_0\in \Pma_\U$ satisfying Hypothesis \ref{hyp:initialPoint}. Let $(\P_k, \Q_k)_{k\ge 0}$ be given by the recursion \eqref{eq:defQk} and
      just before  \eqref{eq:decompPn}. The following holds for $k \ge 0$.
	\begin{enumerate}

		\item $\Q_{k + 1} = \underset{\Q \in \Pma(\Omega)}{\argmin}~\shj(\Q, \P_{k})$, and
                  $$
                  \shj(\Q_{k + 1}, \P_{k}) = -\frac{1}{\epsilon}\log \E^{\P_{k}}\left[\exp\left(-\epsilon\int_0^T f(r, X_r, u^{k}(r, X_r))dr - \epsilon g(X_T)\right)\right],
                  $$
where $u^k = u^{\P_k}$.
                  Moreover, under $\Q_{k + 1}$ the canonical process is a Markov process and $\beta^{k + 1} \in L^q(dt \otimes \Q^{k + 1})$ for all $1 < q < 2$.
				
		\item $\P_{k + 1} \in \underset{\P \in \Pma_{\U}}{\argmin}~\shj(\Q_{k + 1}, \P)$.
                  %and the decomposition \eqref{eq:decompPn} is unique in law.
		 %%%% NADIA THIBAUT. Pourquoi parle-t-on d'unicité ici? En plus ce point n'était pas explicite dans la décomposition
      
                \end{enumerate}
                
\end{lemma}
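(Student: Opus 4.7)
The plan is to verify each item by reducing to the variational representation of Proposition \ref{prop:markovExistenceMinimizer} and to an identification of the relative entropy via a Girsanov-type computation.

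For item (i), I would first observe that whether $k=1$ (using Hypothesis \ref{hyp:initialPoint}) or $k \ge 2$ (using the definition \eqref{eq:defPk}), the measure $\P_{k-1}$ belongs to $\Pma_\U^{Markov}$, so that $u^{\P_{k-1}}_r = u^{k-1}(r, X_r)$ $d\P_{k-1}\otimes dr$-a.e. The random variable $\vphi := \int_0^T f(r, X_r, u^{k-1}(r, X_r))\,dr + g(X_T)$ is bounded below by $0$ thanks to the positivity in Hypothesis \ref{hyp:costFunctionsControl}. Applying Proposition \ref{prop:markovExistenceMinimizer} with this $\vphi$ and with $\P_{k-1}$ directly yields that the $\Q_k$ defined in \eqref{eq:defQk} is the unique minimizer of $\shj(\cdot, \P_{k-1})$ over $\Pma(\Omega)$, together with the announced closed-form expression for $\shj(\Q_k, \P_{k-1})$. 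The Markovian structure of the canonical process under $\Q_k$, together with the integrability $\beta^k \in L^q(dt\otimes \Q_k)$ for every $1<q<2$, then follows directly from Proposition \ref{prop:markovianDrift}, whose setting matches the exponential twist appearing in \eqref{eq:defQk}.

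For item (ii), I would rely on a Girsanov-type computation of the relative entropy. Fix $\P \in \Pma_\U$. If $H(\Q_k | \P) = +\infty$ then $\shj(\Q_k, \P) = +\infty \ge \shj(\Q_k, \P_k)$ and there is nothing to prove. Otherwise, since $\P$ and $\Q_k$ share the same diffusion coefficient $\sigma$ and differ only through their drifts (respectively $b + u^\P$ and $b + \beta^k$), Girsanov's theorem together with the standard identification of the relative entropy between diffusion laws with common volatility yields
\[
H(\Q_k | \P) = \frac{1}{2}\,\E^{\Q_k}\!\left[\int_0^T \bigl|\sigma^{-1}(r, X_r)\bigl(\beta^k(r, X_r) - u^\P_r\bigr)\bigr|^2 dr\right],
\]
the integrability being ensured by $\beta^k \in L^q(dt\otimes \Q_k)$ for some $q>1$ and by the boundedness of $u^\P$ on the compact set $\U$. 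Plugging this into $\shj(\Q_k, \P)$ gives
\[
\shj(\Q_k, \P) = \E^{\Q_k}\!\left[\int_0^T F_{\beta^k}(r, X_r, u^\P_r)\,dr + g(X_T)\right],
\]
with $F_{\beta^k}$ as in \eqref{eq:fBeta}. Since $F_{\beta^k}(r, x, \cdot)$ is strictly convex on the compact set $\U$, and the pointwise argmin $u^k(r, x)$ is a Borel function by Proposition \ref{prop:pointwiseMinimization}, one has the pointwise inequality $F_{\beta^k}(r, X_r, u^\P_r) \ge F_{\beta^k}(r, X_r, u^k(r, X_r))$ $d\Q_k \otimes dr$-a.e. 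Integrating and recalling that $\P_k = \P^{u^k}$ satisfies \eqref{eq:decompPn} yields $\shj(\Q_k, \P) \ge \shj(\Q_k, \P_k)$, which is item (ii).

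The main obstacle is the Girsanov identification of $H(\Q_k | \P)$ in a setting where $\P$ may carry only a progressively measurable (not Markovian) drift while $\Q_k$ has a Markovian drift $\beta^k$ with merely $L^q$, $1<q<2$, integrability. Proving that the associated Doléans--Dade exponential is a true martingale and that the formula for $H(\Q_k | \P)$ holds in full generality typically requires a localization/truncation argument together with Fatou's lemma, exploiting the boundedness of $u^\P$ on $\U$ and the ellipticity of $\sigma$ from Hypothesis \ref{hyp:coefDiffusion}; this is the only step that is not an essentially immediate consequence of the cited propositions.
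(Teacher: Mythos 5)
Your item (i) is exactly the paper's argument: the paper proves this lemma by simply citing Proposition \ref{prop:markovianDrift} (together with Proposition \ref{prop:markovExistenceMinimizer}) for (i) and Proposition \ref{prop:pointwiseMinimization} for (ii). For item (ii) you instead re-derive the content of Proposition \ref{prop:pointwiseMinimization} from scratch, and your route differs in two ways. First, you skip the mimicking/Jensen step (replacing the progressively measurable $u^\P_r$ by $v(r,X_r)=\E^{\Q_k}[u^\P_r\,|\,X_r]$) and compare pointwise $F_{\beta^k}(r,X_r,u^\P_r)\ge F_{\beta^k}(r,X_r,u^k(r,X_r))$; this is legitimate and in fact more direct, since $u^k(r,x)$ is the global minimizer of $F_{\beta^k}(r,x,\cdot)$ over all of $\U$ and $u^\P_r$ takes values in $\U$. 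Second, and this is the point you should correct: you assert the \emph{equality}
\begin{equation*}
H(\Q_k\,|\,\P)=\tfrac12\,\E^{\Q_k}\Bigl[\int_0^T \bigl|\sigma^{-1}(r,X_r)\bigl(\beta^k(r,X_r)-u^\P_r\bigr)\bigr|^2dr\Bigr]
\end{equation*}
for an arbitrary $\P\in\Pma_\U$. The paper's Theorem \ref{th:girsanovEntropy} and Lemma \ref{lemma:girsanovEntropy} item 1.(a) only give the inequality ``$\ge$'' in general; equality requires uniqueness in law for the SDE defining $\P$ (Lemma \ref{lemma:girsanovEntropy} items 1.(b) and 2.), which need not hold for a generic progressively measurable drift $u^\P$. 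This does not break your proof, because for generic $\P$ the lower bound ``$H(\Q_k|\P)\ge\dots$'' is exactly what you need to get $\shj(\Q_k,\P)\ge\E^{\Q_k}[\int_0^T F_{\beta^k}(r,X_r,u^\P_r)dr+g(X_T)]$, while the equality is needed only for $\P=\P_k=\P^{u^k}$, where it does hold by Lemma \ref{lemma:girsanovEntropy} item 2. since $u^k$ is a bounded Borel Markovian drift for which Stroock--Varadhan uniqueness applies (cf. Remark \ref{remark:R81}); the required integrability for that item comes from the finiteness of $\E^{\Q_k}[\int_0^T F_{\beta^k}(r,X_r,u^k(r,X_r))dr]$. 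So the only genuine issue is the overstated identity, and the fix is the distinction inequality-for-general-$\P$ versus equality-for-$\P_k$ just described; your closing remark about Dol\'eans--Dade martingality points at the right difficulty but misses that uniqueness in law, not localization alone, is the hypothesis that upgrades the inequality to an equality.
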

\begin{comment}
  \begin{remark} \label{rmk:RegMark}
Let $u^0 \in \shb([0,T] \times \R^d, \U)$.
We emphasize that the sequence $(u^k)_{k \ge 1}$ of Markovian controls produced by the alternating minimization procedure in \eqref{eq:defQk}-\eqref{eq:defPk} is independent of the initial law $\delta_x$.
		Indeed, the point (c) of the proof of Proposition 5.11 in \cite{BORMarkov2023} shows that the function $\beta$ in Proposition \ref{prop:markovianDrift} does not depend on the initial condition as soon as the reference probability measure $\P$ is Regularly Markovian in the sense of Definition 5.7
                in \cite{BORMarkov2023} (this property is of course
                fulfilled in the case of the present paper).
          $\beta$ is indeed expressed in terms of a
               linear functional $\Gamma$ depending  
on some function $v \in \shb([0, T] \times \R^d, \R)$
only depending on the dynamics of $\P$
and constitutes a ``universal`` object which is not impacted by the initial condition.
Besides, the minimization \eqref{eq:pointwiseMinimization} in Proposition \ref{prop:pointwiseMinimization} does not depend on the initial condition provided that $\beta$ is also independent from it. %Consequently our approach can easily be extended to the situation where the controlled process $X$ is initiated with a general non Dirac probability distribution.
\end{remark}
\end{comment}

	\begin{remark} \label{rmk:RegMark}
		Let $u^0 \in \shb([0,T] \times \R^d, \U)$.
		We emphasize that the sequence $(u^k)_{k \ge 1}$ of Markovian controls produced by the
                alternating minimization procedure in \eqref{eq:defQk}-\eqref{eq:defPk}, is independent of
                the initial law $\delta_x$. 

                Indeed the function $\beta = \beta^{k+1}$ appearing
                at each step, is provided by
                Proposition \ref{prop:markovianDrift}
 and it  is of the type
  $\beta(t,x) = b(t,x,u(t,x)) + \lambda(t,x)$,
  according to the proof. That Proposition \ref{prop:markovianDrift}
  is a consequence of Corollary 6.12 in
                \cite{BORMarkov2023}, which follows from
                Corollary 6.8 of the same paper.
                We recall that,
            %     Proposition \ref{prop:markovianDrift},
% it  is of the type
%                 $\beta(t,x) = b(t,x,u(t,x)) + \ambda(t,x)$.
                the aforementioned function $\lambda$
was of the form
                $ \frac{\Gamma^v(id)}{v}$,
                where, $v$ was defined in (5.4),
 $\phi \mapsto \Gamma^v(\phi)$
                was a map introduced in Proposition 5.11 of
                \cite{BORMarkov2023}.
                
                The function  $v$, and (taking into account of the step (c) of the proof of that Proposition) $\Gamma^v$  only depend on 
the ''Regularly Markovian property'',
                (Hypothesis 6.2),
                which is always fulfilled in our case,
                see Remark 6.7 of the same paper.
That ''Regularly Markovian property''
% The function $v \in \shb([0, T] \times \R^d, \R)$
is in fact only concerned by the dynamics of $\P$ and not on the initial condition.
%It constitutes a ''universal'' object which is not impacted by the initial condition. Also the map constructed in Proposition 5.11
                %  (see Step (c) of its proof),
                % of the same paper,
                % does not depend on
                % the initial condition.

                 Besides, the minimization \eqref{eq:pointwiseMinimization} in Proposition \ref{prop:pointwiseMinimization} does not depend on the initial condition provided that $\beta$ is also independent from it.
               \end{remark}

The main result of this section is given below.

\begin{theorem}
	\label{th:convAlgo}
        Let $\epsilon >0$ and recall that $\shj =\shj_\epsilon$ and
        $\shj^* = \shj ^*_\epsilon$  defined in \eqref{eq:penalizedProblemIntro},
        i.e. 
	$$\shj^* = \underset{(\P, \Q) \in \mathcal{A}}{\inf} \shj(\Q, \P).$$
        % defined
 %       \eqref{eq:OptimalValue}
  %      and $J^* = J^*_\eps$ in Theorem 
  Let $\P_0 = \Q_0$
 %% Pas nécessaire in \Pma_\U$
 satisfying Hypothesis \ref{hyp:initialPoint}.
	Assume also that Hypotheses \ref{hyp:coefDiffusion}, \ref{hyp:costFunctionsControl} and \ref{hyp:convexSet} hold.
	Let $(\P_k, \Q_k)_{k\ge 0}$ be given by the recursion \eqref{eq:defQk}
        and just before \eqref{eq:decompPn}.
%        \eqref{eq:defPk}.
	Then $\shj(\Q_k, \P_k) \underset{k \rightarrow + \infty}{\searrow} \shj^*$. 
%	$\shj^* = \underset{(\P, \Q) \in \mathcal{A}}{\inf} \shj(\Q, \P)$.

        Moreover, there exists a constant $C > 0$,  which only depends on  $c_\sigma$, $C_{b, \sigma}$, $C_{f, g}$, $d$ and $T$
(and not on $k, \epsilon $), 
          such that $0 \le \shj(\Q_k, \P_k) - \shj^* \le \frac{C}{k}\left(1 + \frac{1}{\epsilon}\right)$, for all $k \ge 1$.
\end{theorem}
Theorem \ref{th:convAlgo} and Proposition \ref{prop:approximateControl} yield  Corollary \ref{coro:estimationError} below.
\begin{coro}
	\label{coro:estimationError}
        Let $\epsilon >0$ and $J^*$
        as defined in
        \eqref{eq:controlProblemIntro}.
	Let $\P_0 = \Q_0\in \Pma_\U$ satisfying Hypothesis \ref{hyp:initialPoint}. Let $(\P_k, \Q_k)_{k\ge 0}$ be given by the recursion \eqref{eq:defQk} and
      just before  \eqref{eq:decompPn}.

Under the assumptions of Theorem \ref{th:convAlgo},
	there exists a constant $C > 0$, which depends only on $c_\sigma, C_{b, \sigma}, C_{f, g}, d$ and $T$
(and not on $k, \epsilon $) 
        such that for all $k \ge 1$,
	\begin{equation}
		\label{eq:estimateError}
		0 \le J(\P_k) - J^* \le \epsilon C + \frac{C}{k}\left(1 + \frac{1}{\epsilon}\right).
	\end{equation}
\end{coro}
\begin{remark} \label{rmk:appr}
	We fix $\epsilon > 0$. By Corollary \ref{coro:estimationError}, approximating $J^*$ with a precision $\epsilon$ requires at most $O(1/\epsilon^2)$ iterations of our alternating minimization procedure.
\end{remark}
\begin{proof}[Proof of Corollary \ref{coro:estimationError}]
  Let $C_1 > 0$ be the constant appearing in the convergence rate in Theorem \ref{th:convAlgo}. Let also $C_2 = C^* > 0$ be the constant provided by Proposition \ref{prop:approximateControl} item $1.$
  We recall that $C_1$ and $C_2$ depend only on $c_\sigma, C_{b, \sigma}, C_{f, g}, d$ and $T$. Let us fix  $\epsilon' = \frac{C_1}{k}\left(1 + \frac{1}{\epsilon}\right)$.
  Theorem \ref{th:convAlgo} states that $(\P_k, \Q_k) \in \sha$ is an $\epsilon'$-solution of the penalized Problem \eqref{eq:penalizedProblemIntro}. Then,
  by Proposition \ref{prop:approximateControl} item 2., we have that
   \begin{equation}     
   		\label{eq:firstStepEstimateError}
   		0 \le J(\P_k) - J^* \le \epsilon e^{\epsilon \E[Y_k]}Var^{\P_k}[Y_k] +
      \frac{C_1}{k}\left(1 + \frac{1}{\epsilon}\right)  \le \epsilon C_2e^{C_2} + \frac{C_1}{k}\left(1 + \frac{1}{\epsilon}\right),
   \end{equation}
	where
	$$
	 Y_k = \int_0^T f(r, X_r, u^k(r,X_r)) dr + g(X_T),
	$$
	and \eqref{eq:estimateError} follows from \eqref{eq:firstStepEstimateError} setting $C = C_1 \vee C_2e^{C_2}$.
\end{proof}

Besides Lemma \ref{lemma:sequenceAlternateDirection},
the proof of Theorem \ref{th:convAlgo} uses the so called
three and four points properties introduced in \cite{CsiszarAlternating}.

\begin{lemma}(Three points property).
	\label{lemma:3Points}
We suppose the validity of the hypotheses of Theorem \ref{th:convAlgo}. For all $\Q \in \Pma(\Omega)$,
	\begin{equation}  \label{eq:3points}
          \frac{1}{\epsilon}H(\Q | \Q_{k+1}) + \shj(\Q_{k+1}, \P_k) \le
          \shj(\Q, \P_k).
	\end{equation}
\end{lemma}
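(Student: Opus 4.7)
The plan is to exploit the explicit form of $\Q_{k+1}$ as an exponential twist of $\P_k$, given by Lemma \ref{lemma:sequenceAlternateDirection}(i) (itself a consequence of Proposition \ref{prop:markovExistenceMinimizer}). Introducing
$$
Y_k := \int_0^T f(r, X_r, u^k(r, X_r))\,dr + g(X_T), \qquad Z_k := \E^{\P_k}\bigl[\exp(-\epsilon Y_k)\bigr],
$$
I have $\frac{d\Q_{k+1}}{d\P_k} = Z_k^{-1}\exp(-\epsilon Y_k)$ and $\shj(\Q_{k+1}, \P_k) = -\frac{1}{\epsilon}\log Z_k$. With these formulas in hand I expect a direct chain-rule computation on Radon-Nikodym densities to produce the announced inequality, and in fact an equality.

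If $\shj(\Q, \P_k) = +\infty$ the inequality is trivial, so I reduce to the case $H(\Q | \P_k) < +\infty$ and $\E^\Q[Y_k] < +\infty$ (both terms being nonnegative by Hypothesis \ref{hyp:costFunctionsControl}). Under the same hypothesis, Lemma \ref{lemma:classicalEstimates} implies $Y_k < +\infty$ $\P_k$-a.s., hence $\exp(-\epsilon Y_k) > 0$ $\P_k$-a.s.; thus $\P_k$ and $\Q_{k+1}$ are mutually absolutely continuous, so $\Q \ll \P_k$ forces $\Q \ll \Q_{k+1}$. I may then factorize
$$
\frac{d\Q}{d\Q_{k+1}} = \frac{d\Q}{d\P_k} \cdot \frac{d\P_k}{d\Q_{k+1}} = \frac{d\Q}{d\P_k}\, Z_k\, \exp(\epsilon Y_k), \qquad \Q\text{-a.s.}
$$

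Taking the logarithm and integrating under $\Q$ yields the identity
$$
H(\Q | \Q_{k+1}) = H(\Q | \P_k) + \log Z_k + \epsilon\, \E^\Q[Y_k].
$$
Dividing by $\epsilon$ and rearranging gives
$$
\frac{1}{\epsilon}H(\Q|\Q_{k+1}) + \shj(\Q_{k+1}, \P_k) = \frac{1}{\epsilon}H(\Q | \P_k) + \E^\Q[Y_k] = \shj(\Q, \P_k),
$$
which is actually the \emph{equality} version of \eqref{eq:3points}. The only slightly delicate point, and presumably the main obstacle, is the mild check that $Y_k$ is $\P_k$-a.s. finite so that the density ratio is well defined and invertible; the remainder is purely algebraic and relies neither on convexity of $f$ in the control variable nor on any optimality of the sequence beyond the defining formula \eqref{eq:defQk} for $\Q_{k+1}$.
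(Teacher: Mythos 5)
Your proof is correct and is essentially the paper's own argument: both decompose $\log\frac{d\Q}{d\Q_{k+1}} = \log\frac{d\Q}{d\P_k} + \log\frac{d\P_k}{d\Q_{k+1}}$ using the explicit exponential-twist density of $\Q_{k+1}$ from \eqref{eq:defQk}, integrate under $\Q$, and identify $-\frac{1}{\epsilon}\log Z_k$ with $\shj(\Q_{k+1},\P_k)$ via Lemma \ref{lemma:sequenceAlternateDirection}(i). Your observation that the result is in fact an equality whenever $H(\Q|\P_k)<+\infty$ matches Remark \ref{remark:3points} in the paper.
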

\begin{proof}
	We can suppose  that $H(\Q | \P_{k}) < + \infty$,
	otherwise  $\shj(\Q, \P_k) = + \infty$ and the inequality  holds trivially.
	Let 
	$$
	\varphi : X \mapsto \int_0^T f(r, X_r, u^k(r, X_r))dr + g(X_T),
	$$
	where $u^k$ (and $\P_k$) have been defined in
 \eqref{eq:decompPn} and just before.
% \eqref{eq:defPk}. 	
	By the definition \eqref{eq:defQk} we have
	$$
	\frac{d\Q_{k + 1}}{d\P_k} = \frac{\exp(-\epsilon \varphi(X))}{\E^{\P_k}[\exp(-\epsilon \vphi(X))]}.
	$$
	Now $d\Q_{k + 1}/d\P_k > 0$ implies $\Q_{k + 1} \sim \P_k$, hence
      taking into account  $H(\Q | \P_{k}) < + \infty$, we have that
      $\Q \ll \Q_{k + 1}$, so that, $\Q$-a.s.,
	\begin{equation*}
			\log \frac{d\Q}{d\P_k} = \log \frac{d\Q}{d\Q_{k + 1}} + \log \frac{d\Q_{k + 1}}{d\P_k}
			= \log \frac{d\Q}{d\Q_{k + 1}} - \epsilon \vphi(X) - \log \E^{\P_k}\left[\exp\left(-\epsilon \vphi(X)\right)\right]. 
	\end{equation*}
	Taking the expectation under $\Q$ in the previous equality and dividing both sides by $\epsilon > 0$,
        yields
	\begin{equation*}
		\begin{aligned}
			\frac{1}{\epsilon} H(\Q \vert \Q_{k + 1}) & = \frac{1}{\epsilon}H(\Q | \P_k) + \frac{1}{\epsilon}\log \E^{\P_k}\left[\exp(-\epsilon \vphi(X))\right] + \E^{\Q}[\vphi(X)]\\
			& = \shj(\Q, \P_k) - \shj(\Q_{k + 1}, \P_k),
		\end{aligned}
	\end{equation*}
	where we have used Lemma \ref{lemma:sequenceAlternateDirection} item $1.$ for the latter equality.
\end{proof}

\begin{remark}
	\label{remark:3points}
	Whenever $H(\Q | \P_{k}) < + \infty$, previous proof shows that \eqref{eq:3points} is indeed an equality.
\end{remark}
      
\begin{lemma}(Four points property).
	\label{lemma:4Points}
We suppose the validity of the hypotheses of Theorem \ref{th:convAlgo}. For all $(\P, \Q) \in \mathcal{A}$,
	\begin{equation} \label{eq:4points}
	\shj(\Q, \P_{k+1}) \le \frac{1}{\epsilon}H(\Q | \Q_{k+1}) + \shj(\Q, \P).
	\end{equation}
\end{lemma}
\begin{proof}

    Let $(\P, \Q) \in \mathcal{A}.$ If $H(\Q | \Q_{k + 1}) = + \infty$ or $\shj(\Q, \P) = + \infty$, the inequality is trivial.
    We then assume until the end of the proof that $H(\Q | \Q_{k+1}) < + \infty$ and $\shj(\Q, \P) < + \infty$.

  We first do some preliminary calculations.
  % By Lemma \ref{lemma:sequenceAlternateDirection} item 1.,
We recall that, by
  \eqref{eq:decompQn},
% Pas le lemma, si jamais la Proposition 5.4. Pourquoi ne pas citer (4.4) directement?
                  there exists a measurable function $\beta^{k +1} : [0, T] \times \R^d \rightarrow \R^d$ such that under $\Q_{k + 1}$ the canonical process has decomposition
		\begin{equation*} 
			X_t = x + \int_0^t \beta^{k+1}(r, X_r)dr + M_t^{\Q_{k + 1}},
		\end{equation*}
		where $M^{\Q_{k + 1}}$ is a local martingale under $\Q_{k + 1}$ and
		$[ M^{\Q_{k + 1}}] = \int_0^\cdot \sigma \sigma^\top(r, X_r)dr$.
                We provide
                now
                %%%THIBAUT
                a useful lower bound for                $H(\Q | \Q_{k + 1})$.
		By
		Lemma \ref{lemma:girsanovEntropy} item $1.$ in the Appendix applied
		with $\P = \Q_{k + 1}$
		and the fact that $H(\Q | \Q_{k + 1}) < + \infty$,
		there exists an $(\shf_t)$-progressively measurable process $\alpha = \alpha(\cdot,X)$
		such that,
		under $\Q$, the canonical process has the decomposition
		\begin{equation} \label{eq:Q} 
			X_t = x + \int_0^t \beta^{k+1}(r, X_r)dr + \int_0^t \sigma\sigma^\top(r, X_r)\alpha(r, X) dr + M_t^\Q,
		\end{equation}
		where $M^\Q$ is a local martingale such that $[ M^\Q] = \int_0^\cdot \sigma \sigma^\top(r, X_r)dr$, and
		\begin{equation} \label{eq:Qk1} 
			H(\Q | \Q_{k+1}) \ge \frac{1}{2}\E^\Q\left[\int_0^T |\sigma^\top(r, X_r)\alpha(r, X)|^2dr\right].
		\end{equation}
		We set
		\begin{equation} \label{eq:Q1}
			{\hat \beta}(t, X) := \beta^{k+1}(t, X_t) + \sigma\sigma^\top(t, X_t)\alpha(t, X),
		\end{equation}
		so that \eqref{eq:Qk1} can be rewritten
		\begin{equation}
			\label{eq:lowerBoundEntropy}
			H(\Q | \Q_{k+1}) \ge \frac{1}{2}\E^\Q\left[\int_0^T |\sigma^{-1}(r, X_r)({\hat \beta}(r, X) -
                          \beta^{k+1}(r, X_r))|^2dr\right],
		\end{equation}
where we recall that $\sigma^{-1}$ is the right-inverse of $\sigma$.

We proceed now with the proof of the four points property \eqref{eq:4points}.
Let $u^{k+1}$ and $u^{\P_{k+1}}$ be as in \eqref{eq:decompPn} and just before
% be the function defined in the minimization step \eqref{eq:defPk}. We recall that
so that
$u^{k+1} = u^{\P_{k+1}}$.
  % the function associated with $\P_{k + 1}$ by \eqref{eq:defPk}. Let $F_\beta$ be given by \eqref{eq:fBeta}.
  We set
		\begin{equation}
			\label{eq:defYZ4Points}
			\begin{aligned}
				& y_r^\P := b(r, X_r, \nu^\P(r, X)), \quad y^{k + 1}_r := b(r, X_r, u^{\P_{k + 1}}(r, X_r)),\\
				& z_r^\P := f(r, X_r, \nu^\P(r, X)), \quad z_r^{k + 1} :=  f(r, X_r, u^{\P_{k + 1}}(r, X_r)),
			\end{aligned}			
		\end{equation}
		where $\nu^\P$ (resp. $u^{\P_{k + 1}}$) is   associated to $\P$ (resp. $\P_{k + 1}$), according to Definition \ref{def:PU}. Let $\hat F_{\hat \beta}$ defined in Remark \ref{rmk:fBeta}. Then
		\begin{equation}
			\label{eq:4PointsEq1}
			\begin{aligned}
          {\hat F}_{\hat \beta}(r, X, \nu^\P_r) - {\hat F}_{\hat \beta}(r, X, u^{\P_{k + 1}}(r, X_r)) & =
                                  z_r^\P - z_r^{k + 1}
				+ \frac{1}{2\epsilon}|\sigma^{-1}(r, X_r)({\hat \beta}(r, X) - y^\P_r)|^2\\
				& - \frac{1}{2\epsilon}|\sigma^{-1}(r, X_r)({\hat \beta}(r, X) - y^{k + 1}_r)|^2.
                              \end{aligned}
                            \end{equation}
 We focus on the last two terms in the previous inequality. We apply the  algebraic equality
		$|a|^2 - |b|^2 = |a - b|^2 + 2\langle a - b, b\rangle, $
		with
		$
		a = \sigma^{-1}({\hat \beta} - y^\P), \quad b =  \sigma^{-1}({\hat \beta} - y^{k+1}),$
		  		where for conciseness we have omitted the dependencies in $(r, X)$ of all the quantities at hand.
                So
                we have     
		\begin{equation*}
			\begin{aligned}
				\frac{1}{2\epsilon}|\sigma^{-1}({\hat \beta} - y^\P)|^2 - \frac{1}{2\epsilon}|\sigma^{-1}({\hat \beta} - y^{k + 1})|^2=  \frac{1}{2\epsilon}|\sigma^{-1}(y^\P - y^{k + 1})|^2 + \frac{1}{\epsilon}\langle \sigma^{-1}(y^\P - y^{k + 1}), \sigma^{-1}(y^{k + 1} - {\hat \beta}) \rangle.
			\end{aligned}
		\end{equation*}
		On the other hand,
		\begin{equation*}
			\begin{aligned}
				\frac{1}{\epsilon}\langle \sigma^{-1}(y^\P - y^{k + 1}), \sigma^{-1}(y^{k + 1} - {\hat \beta}) \rangle
                          & = \frac{1}{\epsilon}\langle \sigma^{-1}(y^\P - y^{k + 1}), \sigma^{-1}(y^{k + 1} -
                            {\beta}^{k + 1}) \rangle\\
				& + \frac{1}{\epsilon}\langle \sigma^{-1}(y^\P - y^{k + 1}), \sigma^{-1}({\beta}^{k + 1} - {\hat \beta}) \rangle.
			\end{aligned}
		\end{equation*}
		Combining what precedes yields
		\begin{equation*}
			\begin{aligned}
				\frac{1}{2\epsilon}|\sigma^{-1}({\hat \beta} - y^\P)|^2 - \frac{1}{2\epsilon}|\sigma^{-1}({\hat \beta} - y^{k + 1})|^2 & = \frac{1}{2\epsilon}|\sigma^{-1}(y^\P - y^{k + 1})|^2 + \frac{1}{\epsilon}\langle y^\P - y^{k + 1}, (\sigma^{-1})^\top\sigma^{-1}(y^{k + 1} - \beta^{k + 1}) \rangle\\
				& + \frac{1}{\epsilon}\langle \sigma^{-1}(y^\P - y^{k + 1}), \sigma^{-1}({\beta}^{k + 1} - {\hat \beta}) \rangle.
			\end{aligned}
		\end{equation*}
		From the inequality \eqref{eq:4PointsEq1} we then get 
     \begin{equation} \label{eq:ineqsub}
       \begin{aligned}
         %%%Remplacement \nu^\P(r, X)) par $\nu_r$0
				& {\hat F}_{\hat \beta}(r, X, \nu_r^\P) - {\hat F}_{\hat \beta}(r, X, u^{k + 1}(r, X_r)) = \frac{1}{2\epsilon}|\sigma^{-1}(r, X_r)(y^\P_r - y^{k+1}_r)|^2\\
				& + \frac{1}{\epsilon}\langle \sigma^{-1}(r, X_r)(\beta^{k+1}(r, X_r)
                                  - {\hat \beta}(r, X)), \sigma^{-1}(r, X_r)(y^{\P}_r - y^{k+1}_r)\rangle\\
                & + z_r^\P - z_r^{k + 1} + \frac{1}{\epsilon} \left\langle (\sigma^{-1})^\top\sigma^{-1}(r, X_r)(y_r^{k + 1} - \beta^{k + 1}(r, X_r)),  y_r^\P - y_r^{k + 1}\right\rangle.
				%& +  \left\langle \begin{pmatrix} \frac{1}{\epsilon}(\sigma^{-1})^\top\sigma^{-1}(r, X_r)(y_r^{\P} - \beta^{k + 1}(r, X_r)) \\ 1 \end{pmatrix}, \begin{pmatrix} y_r^\P \\ z_r^\P \end{pmatrix} - \begin{pmatrix}  y_r^{k + 1} \\ z_r^{k + 1} \end{pmatrix}\right\rangle.
			\end{aligned}
                      \end{equation}
                      %%%%%%%%%%%%%%%%%%
		By  \eqref{eq:defPk}
                %%% Un peu bizarre car on peut penser qu'on  se mord la queue.
		$u^{k + 1}(t, x)$ achieves the minimum of $F_{\beta^{k + 1}}(t, x, .)$, for all $(t, x) \in [0, T] \times \R^d$, where the application $F_{\beta^{k + 1}}$ is the one defined in \eqref{eq:fBeta}.
%Taking into account Hypotheses \ref{hyp:costFunctionsControl}  and \ref{hyp:convexSet},
%the function $\bar F^{t, x}_{\beta^{k + 1}}$ given by \eqref{eq:barFbeta} on the convex set $K(t,x)$,  achieves
%its minimum  at the point
                %%%%%%%%%%%%%%%%
                Taking into account \eqref{eq:defPk},
                % Remark \ref{rmk:min} item $1.$
 Lemma \ref{lemma:infKtx}
$2 (b)$,
                applied for any
                $(t, x) \in [0, T] \times \R^d$ with $u^* = u^{k + 1}(t, x)$ and $\beta = \beta^{k + 1}$, states that
the restriction of the function $\bar F^{t, x}_{\beta^{k + 1}}$ given by \eqref{eq:barFbeta} to the convex set $K(t,x)$, achieves
its minimum  at the point
$
(b(t, x, u^{k + 1}(t, x)), f(t, x, u^{k + 1}(t, x))).
$
Consequently, for the generic probability measure $\P$,
Lemma \ref{lemma:infKtx} item $1.$, applied with
$\beta = \beta^{k + 1}$, $(y^*, z^*) = (y^{k + 1}_r, z^{k + 1}_r)$ and $(y, z) = (y^\P_r, z^\P_r)$,
shows that  $(y^{k + 1}_r, z^{k + 1}_r)$ is the unique minimum of ${\bar F}^{t,x}_\beta$,
so that the term on the third line of the equality \eqref{eq:ineqsub} is non-negative.       
Then \eqref{eq:ineqsub} yields
%  Pourquoi pas $\nu_r$ }
		\begin{equation}
			\label{eq:firstOrderIneq}
                             \begin{aligned}
                          & {\hat F}_{\hat \beta}(r, X, \nu_r^\P) - {\hat F}_{\hat \beta}(r, X, u^{k + 1}(r, X_r))dr \ge \frac{1}{2\epsilon}|\sigma^{-1}(r, X_r)(y_r^\P - y_r^{k + 1})|^2\\
			& + \frac{1}{\epsilon}\langle \sigma^{-1}(r, X_r)({\beta}^{k+1}(r, X_r)
                          - {\hat \beta}(r, X)), \sigma^{-1}(r, X_r)(y^{\P}_r - y^{k+1}_r)\rangle.
			\end{aligned}
                      \end{equation}
                      %%%%%%%%%%%
		Next, by the classical inequality $|ab| \le a^2/2 + b^2/2$ for all $(a, b) \in \R^2$, the right-hand side
                term in inequality \eqref{eq:firstOrderIneq} gives
		\begin{equation*}
			\begin{aligned}
				& \frac{1}{\epsilon}\langle \sigma^{-1}(r, X_r)(\beta^{k+1}(r, X_r) - {\hat \beta}(r, X)), \sigma^{-1}(r, X_r)(y^{\P}_r - y^{k+1}_r)\rangle \\
				\ge & - \frac{1}{2\epsilon} |\sigma^{-1}(r, X_r)({\hat \beta}(r, X) - \beta^{k+1}(r, X_r))|^2\\
				& -\frac{1}{2\epsilon}|\sigma^{-1}(r, X_r)(y^\P_r - y^{k+1}_r)|^2, 
			\end{aligned}
		\end{equation*}
		and from inequality \eqref{eq:firstOrderIneq} we get
		$$
		{\hat F}_{\hat \beta}(r, X, \nu_r^\P) + \frac{1}{2\epsilon}|\sigma^{-1}(r, X_r)({\hat \beta}(r, X) - \beta^{k+1}(r, X_r))|^2 \ge {\hat F}_{\hat \beta}(r, X, u^{k + 1}(r, X_r)).
		$$
		Integrating previous inequality with respect to
                $r \in [0, T]$, yields
		\begin{equation}
			\label{eq:4pointLastStep}
			\int_0^T {\hat F}_{\hat \beta}(r, X, \nu^\P_r)dr  + \frac{1}{2\epsilon} \int_0^T|\sigma^{-1}(r, X_r)({\hat \beta}(r, X) - \beta^{k+1}(r, X_r))|^2dr \ge \int_0^T {\hat F}_{\hat \beta}(r, X, u^{k + 1}(r, X_r))dr.
		\end{equation}
	  %We recall the decomposition \eqref{eq:decompP} and the fact that
              Since  $H(\Q | \P) < + \infty$, by Definition \ref{def:PU},
                % by assumption,
                Lemma \ref{lemma:girsanovEntropy} item $1.$
                with $\delta(\cdot, X) = b(., X_\cdot, \nu^\P(\cdot, X))$
(writing $\nu^\P(r,X)= \nu^\P_r$),
                states the existence of a predictable process $\tilde \alpha$ such that
		\begin{equation}\label{eq:841bis}
                  X_t = x + \int_0^t b(r, X_r, \nu_r^\P) dr
%%%%% Il y avait \nu^\P(r, X)) 
                  + \int_0^t\sigma\sigma^\top(r, X_r) \tilde \alpha(r, X)dr + \tilde M_t^\Q,
		\end{equation}
		where $\tilde M^\Q$ is a ($\Q, \shf_t$)-local martingale with $[\tilde M^\Q] = \int_0^\cdot \sigma\sigma^\top(r, X_r)dr$.
	By \eqref{eq:Q} and \eqref{eq:Q1}, under $\Q$, the canonical process decomposes as
\begin{equation}
			\label{eq:lastDecompQ}
			X_t = x + \int_0^t {\hat \beta}(r, X)dr + M_t^\Q,
		\end{equation}
		where $M^\Q$ is a local martingale verifying $[M^\Q] = \int_0^{\cdot} \sigma\sigma^\top(r, X_r)dr$.

                Identifying the bounded variation component between \eqref{eq:841bis} and decomposition \eqref{eq:lastDecompQ}
		(under $\Q$), yields $\hat \beta(r, X) - b(r, X_r, \nu^\P_r)
%%% pareil ici
                = \sigma\sigma^\top(r, X_r) \tilde \alpha(r, X)$
		and \eqref{eq:inequalityEntropy} in Lemma \ref{lemma:girsanovEntropy} item $1.$ implies that
		\begin{equation}
			\label{eq:entropQP}
			H(\Q | \P) \ge \frac{1}{2}\E^{\P}\left[\int_0^T |\sigma^{- 1}(r, X_r)(b(r, X_r, \nu_r^\P)-
                          {\hat \beta}(r, X))|^2dr\right].
                      \end{equation}     
Then recalling the definition of $\shj$ in \eqref{eq:penalizedProblemIntro},
previous inequality \eqref{eq:entropQP} yields
		\begin{equation}
			\label{eq:costQP}
			\shj(\Q, \P) \ge \E^{\Q}\left[\int_0^T {\hat F}_{\hat \beta}(r, X, \nu^\P_r)dr + g(X_T) \right].
   %%%  Vérifier les cohérence des epsilon
		\end{equation}
		From \eqref{eq:costQP} and \eqref{eq:lowerBoundEntropy} it holds
		\begin{equation*}
			\begin{aligned}
                          \shj(\Q, \P) + \frac{1}{\epsilon}H(\Q | \Q_{k + 1}) & \ge \E^{\Q}\left[\int_0^T
     {\hat F}_{\hat \beta}(r, X, \nu_r^\P)dr + g(X_T) \right]\\
				& + \frac{1}{2\epsilon}\E^{\Q}\left[\int_0^T|\sigma^{-1}(r, X_r)({\hat \beta}(r, X) - \beta^{k+1}(r, X_r))|^2dr\right],
			\end{aligned}
		\end{equation*}
		and by \eqref{eq:4pointLastStep}
		\begin{equation}
			\label{eq:4pointLastStep2}
			\shj(\Q, \P) + \frac{1}{\epsilon}H(\Q | \Q_{k + 1}) \ge \E^{\Q}\left[\int_0^T
                          {\hat F}_{\hat \beta}(r, X, u^{k + 1}(r, X_r))dr + g(X_T) \right].
		\end{equation}
		In particular, since $g \ge 0$,
     we have $\E^{\Q}\left[\int_0^T {\hat F}_{\hat \beta}(r, X, u^{k + 1}(r, X_r))dr \right] < + \infty$, hence, recalling  the expression \eqref{eq:rmkFbeta} 
		$$
		\E^{\Q}\left[\int_0^T |\sigma^{- 1}(r, X_r)(b(r, X_r, u^{k + 1}(r, X_r)) -
                  {\hat \beta}(r, X))|^2dr\right] < + \infty.
		$$
                We keep in mind \eqref{eq:Q} and  \eqref{eq:Q1}. By Lemma \ref{lemma:sequenceAlternateDirection} item 2., the decomposition \eqref{eq:decompPn} is unique in law. Then,
                by Lemma \ref{lemma:girsanovEntropy} item $2.$ applied to
                $\P = \P_{k + 1} (= \P^{u^{k+1}})$  with $\delta(\cdot, X) = b(\cdot, X_\cdot, u^{k + 1}(\cdot, X_\cdot))$ and $\gamma = {\hat \beta}$, we have
		\begin{equation}
			\label{eq:entropQPk}
			H(\Q | \P_{k + 1}) = \frac{1}{2}\E^{\Q}\left[\int_0^T |\sigma^{- 1}(r, X_r)(b(r, X_r, u^{k + 1}(r, X_r)) - {\hat \beta}(r, X))|^2dr\right],
                      \end{equation}
                      and so
                      %by \eqref{eq:entropQPk}
		\begin{equation}
			\label{eq:costQPk}
	\shj(\Q, \P_{k + 1}) = \E^{\Q}\left[\int_0^T {\hat F}_{\hat \beta}(r, X, u^{k + 1}(r, X_r))dr + g(X_T) \right].
		\end{equation}
		Finally, combining \eqref{eq:4pointLastStep2} and \eqref{eq:costQPk}, we get
		$$
		\shj(\Q, \P) + \frac{1}{\epsilon}H(\Q | \Q_{k + 1}) \ge \shj(\Q, \P_{k + 1}).
		$$
		This concludes the proof.
\end{proof}

\begin{lemma}
	\label{lemma:estimateOptimalSolution}
	Let $(\P_\epsilon^*, \Q_\epsilon^*)$ be an optimal solution
                to Problem \eqref{eq:penalizedProblemIntro}, given by Theorem
        \ref{th:existenceSolutionRegProb},
        under the assumptions of the aforementioned theorem.
 Let $(\P_k, \Q_k)_{k\ge 0}$ be given by the recursion \eqref{eq:defQk} and
      just before  \eqref{eq:decompPn}.

        There exists a constant  $C > 0$,  which only depends on $c_\sigma, C_{b, \sigma}, C_{f, g}, d$ and $T$, such that, for all $k \ge 0$, $\shj(\Q_\epsilon^*, \P_k) \le C\left(1 + \frac{1}{\epsilon}\right)$.
\end{lemma}

The proof of the  result above is postponed to Appendix \ref{app:proofEstimate} for clarity.

\begin{proof}[Proof of Theorem \ref{th:convAlgo}.]
  Combining \eqref{eq:3points} in Lemma \ref{lemma:3Points}
  and \eqref{eq:4points} in Lemma \ref{lemma:4Points} we get, for all $k \ge 0$, the so called five points property
	\begin{equation}
		\label{eq:5points}
		\shj(\Q, \P_{k + 1}) + \shj(\Q_{k + 1}, \P_k) \le \shj(\Q, \P_k) + \shj(\Q,\P).
	\end{equation}
	Evaluating \eqref{eq:5points} for $(\P,\Q)$  being
        the solution $(\P_\epsilon^*, \Q_\epsilon^*)$ of the penalized problem given by Theorem \ref{th:existenceSolutionRegProb}, we obtain
	\begin{equation}
		\label{eq:5Points}
		\shj(\Q_\epsilon^*, \P_{k + 1}) + \shj(\Q_{k + 1}, \P_k) \le \shj(\Q_\epsilon^*, \P_k) + \shj(\Q_\epsilon^*,\P_\epsilon^*),
	\end{equation}
	and since $\shj(\Q_\epsilon^*, \P_k) < + \infty$, by Lemma \ref{lemma:estimateOptimalSolution}, the previous inequality rewrites
	\begin{equation}
		\label{eq:interRate}
		\shj(\Q_{k + 1}, \P_k) - \shj^* \le \shj(\Q_\epsilon^*, \P_k) - \shj(\Q_\epsilon^*, \P_{k + 1}),
	\end{equation}
	where we have used the equality $\shj(\Q_\epsilon^*,\P_\epsilon^*) = \shj^*$.

        Let $K$ be a fixed number of iterations of the algorithm. Summing equation \eqref{eq:interRate} between $0$ and $K - 1$ and dividing each member of the inequality by $K$, we get
	\begin{equation}
		\label{eq:interRate2}
		\frac{1}{K}\sum_{k = 0}^{K - 1}\shj(\Q_{k + 1}, \P_k) - \shj^* \le \frac{1}{K}\left(\shj(\Q_\epsilon^*, \P_0) - \shj(\Q_\epsilon^*, \P_K)\right).
	\end{equation}
	By construction of the sequence $(\P_k, \Q_k)_{k\ge 0}$, it holds that
	\begin{equation}
		\label{eq:infSequence}
		\shj(\Q_{k + 1}, \P_k) \ge \shj(\Q_{k + 1}, \P_{k + 1}) \ge \shj(\Q_{k + 2}, \P_{k + 1}) \ge \dots \ge \shj(\Q_K, \P_K) 
              \end{equation}
              for all $k \le K-1$.
              Applying \eqref{eq:infSequence} in \eqref{eq:interRate2} gives
	\begin{equation}
	 	\label{eq:convergenceRate}
	0 \le \shj(\Q_K, \P_K) - \shj^* \le \frac{1}{K}\left(\shj(\Q_\epsilon^*, \P_0) - \shj(\Q_\epsilon^*, \P_K)\right) \le \frac{1}{K}\shj(\Q_\epsilon^*, \P_0).
	\end{equation}
        %% Il y avait une répétition
	% hence
	% \begin{equation}
	% 	\label{eq:convergenceRate}
	% 	0 \le \shj(\Q_K, \P_K) - \shj^* \le \frac{1}{K}\shj(\Q_\epsilon^*, \P_0).
	% \end{equation}
	Finally, by Lemma \ref{lemma:estimateOptimalSolution}, there exists a constant $C > 0$, which only depends on $c_\sigma, C_{b, \sigma}, C_{f, g}, d$ and $T$, such that $\shj(\Q_\epsilon^*, \P_0) \le C\left(1 + \frac{1}{\epsilon}\right)$ and \eqref{eq:convergenceRate} yields
	$$
      % Dans le lemme il y avait $k \ge 1$.
	0 \le \shj(\Q_K, \P_K) - \shj^* \le \frac{C}{K}\left(1 + \frac{1}{\epsilon}\right).
	$$
	Previous relation proves the convergence of the algorithm and exhibits a convergence rate for fixed $\epsilon$. This concludes the proof.
\end{proof}

\begin{remark} \label{rmk:ConvRate}
  %%THIBAUT
  To prove Lemma \ref{lemma:3Points} and Lemma \ref{lemma:4Points}, one
  can relax the continuity assumption on $b$ in Hypothesis \ref{hyp:coefDiffusion} and assume instead that $b(t, x, \cdot)$ is continuous for all $(t, x) \in [0, T] \times \R^d$ and $b(\cdot, \cdot, u)$ is measurable for all $u \in \U$.
  % to prove Lemma \ref{lemma:3Points} and Lemma \ref{lemma:4Points}.
  Then $\shj$ verifies the so-called five point property \eqref{eq:5points} and Theorem 2 in \cite{CsiszarAlternating} ensures that $\shj(\Q_k, \P_k) \underset{k \rightarrow + \infty}{\searrow} \shj^*$. However our proof of Theorem \ref{th:existenceSolutionRegProb} strongly relies on the continuity of $b$ in $(t, x, u) \in [0, T] \times \R^d \times \U$, and this stronger regularity allows to exhibit a convergence rate in Theorem \ref{th:convAlgo}.
\end{remark}

We conclude the section by stating a lemma which is a reformulation in our setting of Proposition 3.9 in \cite{EntropyWeighted}.
%%% N'y a-t-il pas une relation avec le papier Markov?
This allows us to estimate the drift $\beta^k$ in the algorithm via a conditional derivative.
\begin{lemma}
	\label{lemma:regDrift}
	Assume Hypothesis \ref{hyp:coefDiffusion} and
%%%  N'y a-t-il que ces hypothèses? Il faudrait mentionner les $\beta_k$ et le contexte?
         that a probability $\P_0$ verifies Hypothesis \ref{hyp:initialPoint}.
        Consider the sequence constructed after Remark \ref{rmk:fBeta}. 
	For almost all $0 \le t < T$, it holds that
	\begin{equation}
		\label{eq:approxDrift}
		\lim_{h \downarrow 0} \E^{\Q_k}\left[\frac{X_{t + h} - X_t}{h}~\Big| ~X_t\right] = \beta^k(t, X_t)~\text{in}~L^1(\Q_k).
	\end{equation}
\end{lemma}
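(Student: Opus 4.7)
My plan is to show that the situation of the lemma fits the hypotheses of Proposition 3.9 of \cite{EntropyWeighted} and invoke that result, as suggested by the statement. First, I would use the semimartingale decomposition \eqref{eq:decompQn} of $X$ under $\Q_k$ to split
$$\frac{X_{t+h}-X_t}{h} = \frac{1}{h}\int_t^{t+h}\bigl(b(r,X_r)+\beta^k(r,X_r)\bigr)dr + \frac{M_{t+h}^{\Q_k}-M_t^{\Q_k}}{h}.$$
Since $\sigma(X_t)\subset\F_t$ and $M^{\Q_k}$ is a true $(\F_t)$-martingale (its quadratic variation $\int_0^{\cdot}\sigma\sigma^\top(r,X_r)dr$ is integrable thanks to the linear growth in Hypothesis \ref{hyp:coefDiffusion} combined with $L^p$-estimates on $X$), the tower property kills the martingale increment under $\E^{\Q_k}[\cdot\mid X_t]$. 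Applying Fubini to the drift term then reduces the claim to
$$\frac{1}{h}\int_t^{t+h}\E^{\Q_k}\bigl[(b+\beta^k)(r,X_r)\,\big|\,X_t\bigr]dr \;\longrightarrow\; (b+\beta^k)(t,X_t) \quad\text{in } L^1(\Q_k)$$
for almost every $t\in[0,T)$.

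Second, I would verify the integrability prerequisites of Proposition 3.9 in \cite{EntropyWeighted}. The linear growth of $b$ (Hypothesis \ref{hyp:coefDiffusion}), combined with Lemma \ref{lemma:classicalEstimates} applied to $\P_{k-1}\in\Pma_\U$ and the finite entropy $H(\Q_k\,|\,\P_{k-1})<+\infty$ transferred via a standard Young-type inequality $\E^{\Q_k}[|F|]\le H(\Q_k|\P_{k-1})+\log \E^{\P_{k-1}}[e^{|F|}]$, yields $b(\cdot,X_\cdot)\in L^1(dt\otimes\Q_k)$. For the twist drift, Lemma \ref{lemma:sequenceAlternateDirection}(i) directly provides $\beta^k\in L^q(dt\otimes\Q_k)$ for every $1<q<2$, hence $\beta^k\in L^1(dt\otimes\Q_k)$. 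Thus the drift of $X$ under $\Q_k$ is absolutely continuous in $t$ with a density in $L^1(dt\otimes\Q_k)$, which is precisely the structural assumption required by Proposition 3.9 of \cite{EntropyWeighted}. The Markovianity of $X$ under $\Q_k$ (Lemma \ref{lemma:sequenceAlternateDirection}(i)) ensures that conditioning on $X_t$ behaves as expected for this limit.

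The main obstacle I anticipate is the upgrade from almost-sure Lebesgue differentiation to $L^1(\Q_k)$ convergence of the conditional expectations: naive differentiation $\tfrac{1}{h}\int_t^{t+h}g(r,X_r)dr\to g(t,X_t)$ holds $\Q_k$-a.s. for a.e. $t$, but passing through the conditional expectation operator $\E^{\Q_k}[\cdot\mid X_t]$ — which is a contraction only on $L^1$ — requires the global integrability established in the previous step. The window $q<2$ (and not better) for $\beta^k$ is precisely what the entropic change of measure from a probability in $\Pma_\U$ produces, and is exactly tailored to fit the hypotheses of Proposition 3.9 of \cite{EntropyWeighted}; this is why that proposition, rather than an elementary Lebesgue differentiation argument, is the right tool to conclude.
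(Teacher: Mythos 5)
Your overall strategy is the right one and coincides with the paper's: decompose $X$ under $\Q_k$ via \eqref{eq:decompQn}, kill the martingale increment by the tower property, and reduce the claim to an $L^1$ Lebesgue-differentiation statement for the drift, which is exactly the content of Lemma \ref{lemma:nelsonDerivative} and Remark \ref{remark:nelsonDerivative} (the paper does not actually re-invoke Proposition 3.9 of \cite{EntropyWeighted} in the proof; it uses its self-contained version in the appendix). The treatment of $\beta^k$ via Lemma \ref{lemma:sequenceAlternateDirection}~$(i)$ is also what the paper does.

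There is, however, a genuine gap in your integrability step for $b$. First, the Donsker--Varadhan/Young transfer $\E^{\Q_k}[|F|]\le H(\Q_k|\P_{k-1})+\log\E^{\P_{k-1}}[e^{|F|}]$ with $F=\int_0^T|b(r,X_r)|dr$ is only useful if $\E^{\P_{k-1}}[e^{|F|}]<+\infty$; under Hypothesis \ref{hyp:coefDiffusion} the coefficients have merely linear growth, so Lemma \ref{lemma:classicalEstimates} gives all \emph{polynomial} moments of $\sup_t|X_t|$ but no exponential moment (think of a geometric-Brownian-motion-type diffusion), and the right-hand side may be $+\infty$. Second, even if it worked it would only give $b(\cdot,X_\cdot)\in L^1(dt\otimes\Q_k)$, which is \emph{not} the structural assumption of the differentiation lemma: the uniform-integrability upgrade from a.s.\ Lebesgue differentiation to convergence in $L^1(dt\otimes\Q_k)$ — which you yourself flag as the main obstacle — requires $L^p$ for some $p>1$ (this is why Lemma \ref{lemma:nelsonDerivative} assumes $\E[\int_0^T|b_r|^pdr]<+\infty$ with $p>1$). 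The correct and much simpler route, used in the paper, is to observe that since $f,g\ge 0$ the numerator in \eqref{eq:defQk} is bounded by $1$ while the denominator is a fixed positive constant, so $C_\infty:=\|d\Q_k/d\P_{k-1}\|_\infty<+\infty$; then $\E^{\Q_k}[\int_0^T|b(r,X_r)|^pdr]\le C_\infty\,\E^{\P_{k-1}}[\int_0^T|b(r,X_r)|^pdr]<+\infty$ for any fixed $1<p<2$ by linear growth and Lemma \ref{lemma:classicalEstimates}. A last small imprecision: passing from conditioning on $\F_t$ to conditioning on $X_t$ does not need the Markov property of $X$ under $\Q_k$, only that the limiting drift $b(t,X_t)+\beta^k(t,X_t)$ is $\sigma(X_t)$-measurable together with the tower property and the $L^1$-contractivity of conditional expectation (Remark \ref{remark:nelsonDerivative}).
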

\begin{proof}
  We fix some $1 < p < 2.$ By decomposition \eqref{eq:decompQn},
replacing $k+1$ with $k$,
  %%%  On mentionne $k+1$ là-bas
	in order to apply Lemma \ref{lemma:nelsonDerivative}, it is enough to have
	$\| \beta^k\|_{L^p(dt \otimes \Q_k)} < + \infty$, which is guaranteed by item $1.$ of Lemma \ref{lemma:sequenceAlternateDirection}. Consequently Lemma \ref{lemma:nelsonDerivative} and Remark \ref{remark:nelsonDerivative}
	yield the result.
\end{proof}

% \begin{remark} \label{rmk:Algorithm}
%   Our algorithm has the advantage of relying on two standard optimization sub-problems that are simpler than the original stochastic control problem: on the one hand, an exponential twist problem \eqref{eq:defQk} and, on
%   the other hand, a convex pointwise optimization problem \eqref{eq:defPk}.
% From a numerical point of view, each of the subproblems can be solved numerically by specific approaches.
% For example, solving the exponential twist problem can be reduced to computing independent conditional expectations on each time step, as shown in Lemma \ref{lemma:regDrift}. These conditional expectations computations can be efficiently addressed by deep learning methods when the dimension is high. However, in the numerical applications considered in Section \ref{sec:example}, we choose to use a simple polynomial regression Monte-Carlo method since we restrict ourselves to a dimension less than 20.
% \end{remark}

	\begin{remark} \label{rmk:Algorithm}
          Our algorithm has the advantage of relying on two standard optimization sub-problems that are simpler than the original stochastic control problem: on the one hand, an exponential twist problem \eqref{eq:defQk} and, on the other hand, a convex pointwise optimization problem \eqref{eq:defPk}. From a numerical point of view, each of the subproblems can be solved numerically by specific approaches. For example, solving the exponential twist problem can be reduced
          to computing independent conditional expectations on each time step, as shown in Lemma \ref{lemma:regDrift},
%          In our case, at each time step, each conditional expectation can be computed independently,
    hence those computations can be easily parallelized.
    Our method has a clear advantage with respect to HJB or BSDEs representation
    of the solution of control problems, which involve nonlinearly nested conditional expectations,
    because of the backward dynamical programming recursion.
In our context 
the conditional expectation computations can be also efficiently addressed by deep learning methods when the dimension is high,
    see e.g. 
    \cite{MLPDEWarin, Germain22, HighDimensionalPDEDL, Hure20}.
    However, in the numerical applications considered in Section \ref{sec:example}, we choose to use a simple polynomial regression Monte-Carlo method,
    % see e.g. \cite{bender10, gobet10bis, gobet17},
    since we restrict ourselves to a dimension less than 20.
  This linearization effect has of course the cost of repeating the procedure along $k$ iterations, 
  allowing a convergence rate  $O(\frac{1}{k})$.
%  {\bf A insérer dans l'introduction.}

  \end{remark}

\subsection{Entropy penalized Monte-Carlo algorithm}

The alternating minimization procedure in Section \ref{sec:convergenceResult} suggests a Monte-Carlo algorithm to approximate a solution to Problem \eqref{eq:controlProblemIntro}. In the following, $0 = t_0 \le t_1 < ... < t_M = T$, is a regular subdivision of the time interval $[0, T]$ with step $\Delta t,$ $N \ge 0$
being the number of particles and $K$ the number of descent steps of the algorithm. $P_r$ will denote the set of $\R^d$-valued polynomials defined on $\R^d$ of degree $\le r$. Recall that for all $\hat u \in \B([0, T] \times \R^d, \U)$, $\P^{\hat u}$ is the probability measure given by Proposition \ref{prop:existencePu}.
The estimation of the drift ${\hat \beta}^k$
% under $\Q^k$
in Step 2 of the  algorithm below,
is performed via regression.
This
is inspired by \eqref{eq:approxDrift}
in Lemma \ref{lemma:regDrift}.
%, which is a reformulation in our setting of Proposition 3.9 in \cite{EntropyWeighted}.  Déjà dit donc commenté.
The term in the argmin 
is a weighted Monte-Carlo approximation of the
expectation of $\frac{X_{m + 1}^n - X_m^n}{\Delta t} $,
under the exponential twist of the probability measure
$\P^{\hat u^{k-1}}$.

\begin{algorithm}[H]
	\caption{Entropy penalized Monte-Carlo algorithm}
	\begin{algorithmic}
          \State {\bf Parameters initialization:} $M, N, K \in \N^*,~r \in \N, ~\Delta t := \frac{T}{M},~x \in \R^d$,
          %${\hat u}^0 := ({\hat u}^0_m)_{1 \le m \le N}$
         % where
         $\hat u^0 \in \B([0,T] \times \R^d, \U)$.
%% Comment concrètement vous choisissez $u^0$?
         
         \State {\bf Simulate:} $(X^n)_{1 \le n \le N}$,  $N$ iid Monte-Carlo path
          simulations under ${\hat \P}_0 = \P^{{\hat u}^0}$
          on the time-grid $(t_m)_{0 \le m \le M}$, with $X^n = (X^n_m)_{0 \le m \le M}$ and $X^n_0 = x$, for all $1 \le n \le N$.
		\For {$1 \le k \le K$}
		\State \textbf{Step 1. \ }
           Compute the weights $(D_n)_{1 \le n \le N}$ by
		$$
		D_n = \exp\left(-\epsilon \sum_{m = 0}^{M - 1} f(t_m, X_m^n,
                  {\hat u}^{k - 1}(t_m,X_m^n))\Delta t - \epsilon g(X_M^n)\right).
		$$
		\State \textbf{Step 2. \ } Compute
                ${\hat \beta}^k = ({\hat \beta}^k_m)_{0 \le m \le M-1} $ in \eqref{eq:decompQn} by the weighted Monte-Carlo approximation of \eqref{eq:approxDrift}
                $$
		{\hat \beta}_m^k \in \argmin_{\vphi \in P_r}
              \frac{1}{\sum_{\ell= 1}^N D_\ell}  \sum_{n = 1}^ND_n\left|\vphi(X_m^n) -
           \frac{X_{m + 1}^n - X_m^n}{\Delta t} \right|^2.
         $$
		
		\State  \textbf{Step 3. \ } Simulate new iid Monte-Carlo paths $(X^n)_{1 \le n \le N}$ under $\P^{\hat u^k}$, where for $0 \le m \le M - 1$

		\begin{equation}
			\label{eq:approxControl}
			{\hat u}^{k}(t, x) = \underset{a \in \U}{\argmin}~
		f(t_m, x, a) + \frac{1}{2\epsilon}|\sigma^{-1}(t_m, x)
                ({\hat \beta}_m^k( x) - b(t_m, x, a))|^2, \ t \in [t_m,t_{m+1}[.
		\end{equation}
		\EndFor\\
		\Return ${\hat u}^K$
	\end{algorithmic}
	\label{algo:mcEntropy}
\end{algorithm}

\begin{remark} \label{rmk:Storing}
  The algorithm stores the functions $\hat \beta^k$, from which the controls are computed. In our implementation these functions
  are polynomial regressors (whose coefficients are stored at each time steps) but one could also imagine storing them in the
  form of neural networks or any other machine learning models.
  The algorithm actually returns $\hat \beta^k$ after $k$ iterations,
from which the feedback $\hat u^k$ can be evaluated in each point $(t, x)$ by solving the minimization problem \eqref{eq:approxControl}, 
%	$$
%	{\hat u}^{K}(t, x) = \underset{a \in \U}{\text{argmin}}~f(t_m, x, a) + \frac{1}{2\epsilon}|\sigma^{-1}(t_m, x) ({\hat \beta}_m^K( x) - b(t_m, x, a)|^2, \ t \in [t_m,t_{m+1}[,
%	$
  which defines a measurable function, by Proposition \ref{prop:pointwiseMinimization} below.
  Thus an optimal feedback control is an output of the algorithm.
\end{remark}

An interest of the entropy penalized Monte-Carlo algorithm is that in Lemma \ref{lemma:regDrift}, \eqref{eq:approxDrift} can be independently estimated by regression techniques at each time step $t_m$, $1 \le m \le M$, while in dynamic programming approaches, conditional expectations are recursively computed in time, implying an error accumulation from time $t_M = T$ to $t_m$.
Moreover one can expect that the trajectories simulated under $\P^{\hat u^k}$, localize around the optimally controlled trajectories, when the number of iterations $k$ of the algorithm increases to $+ \infty$. Hence, the computational effort to estimate the optimal control, focuses on this specific region of the state space, whereas standard regression based Monte-Carlo approaches are blindly exploring the state space, with forward Monte-Carlo simulations of the process.

\section{Solving the subproblems}
\label{sec:subProblems}
In this section we aim at describing the two subproblems $\underset{\P \in \Pma_{\U}}{\inf} \shj(\Q, \P)$ and $\underset{\Q \in \Pma_{\Omega}}{\inf} \shj(\Q, \P)$ appearing in the alternating minimization algorithm proposed in Section \ref{sec:algo}.
\subsection{Pointwise minimization subproblem}
\label{sec:pointwiseMinimization}
%----------------------------------------------------------

\setcounter{equation}{0}

Let us first describe the minimization $\underset{\P \in \Pma_{\U}}{\inf} \shj(\Q, \P)$ where the probability $\Q \in \Pma(\Omega)$ is fixed and is
such that, under $\Q$, the canonical process is a fixed Itô process.
In this section, we assume that Hypotheses \ref{hyp:coefDiffusion}, \ref{hyp:costFunctionsControl} and \ref{hyp:convexSet} are fulfilled.
Let  $p \ge 1$ be the real intervening in Hypothesis \ref{hyp:costFunctionsControl} item $1$.
%%%  Selon les hypothèses suivantes ces hypothèses
%%% n'interviennent pas en même temps.
In the sequel of the present section we also make a specific assumption for a given probability $\Q$ on the canonical space.
\begin{hyp}	\label{hyp:QMarkov}
  There is a Borel function  $\beta:[0,T] \times \R^d \rightarrow \R$ for which the canonical process $X$ decomposes as
\begin{equation}
  	\label{eq:decompQ}
 	X_t = x + \int_0^t \beta(r, X_r)dr + M^\Q_t,
 \end{equation}
 where $M^\Q$ is a local martingale verifying $[ M^\Q ] = \int_0^{\cdot} \sigma\sigma^\top(r, X_r)dr$. Moreover, $\E^\Q\left[\underset{0 \le r \le T}{\sup}|X_r|^p\right] < + \infty$.
 %where $p \ge 1$ is given by Hypothesis \ref{hyp:costFunctionsControl} item $1$.
\end{hyp}
For the proposition below we recall that if $u:[0,T] \times \R^d \rightarrow \R$ is a Borel function then
 $\P^u \in \Pma_\U^{Markov}$ denotes the  associated probability measure given by Proposition \ref{prop:existencePu}.
 \begin{prop} 
	\label{prop:pointwiseMinimization}
        %	Assume that Hypotheses \ref{hyp:coefDiffusion}, \ref{hyp:costFunctionsControl} are fulfilled. Suppose also that $\Q$  fulfills Hypothesis \ref{hyp:QMarkov}.
        There exists a measurable function $(t, x) \mapsto u(t, x) \in \U$ such that
	\begin{equation} 
          \label{eq:pointwiseMinimization}
		u(t, x) \in \argmin_{a \in \U} F_\beta(t, x, a),
              \end{equation}
              where $F_\beta$ is given by \eqref{eq:fBeta}, which is well-defined and measurable.
              % If we denote $\P^u \in \Pma_\U^{Markov}$ the associated probability measure given by Proposition \ref{prop:existencePu}, it holds that
              Moreover $\shj(\Q, \P^{u}) = \underset{\P \in \Pma_{\U}}{\inf} \shj(\Q, \P)$.
      \end{prop}

      \begin{proof}[Proof of Proposition \ref{prop:pointwiseMinimization}]
  We will make use of the function $\bar F_\beta^{t, x}$ defined by \eqref{eq:barFbeta},
  defined for all $(t, x) \in [0, T] \times \R^d$.
  %we will make use of the function $\bar F_\beta^{t, x}$ defined by \eqref{eq:barFbeta}.
  We also keep in mind  the definition \eqref{eq:Ktx} of the convex set $K(t, x)$ where one
  will consider the restriction
      of $\bar F^{t, x}_\beta$.
  For all $(t, x) \in [0, T] \times \R^d$ let us consider $(y^*(t, x), z^*(t, x)) \in K(t, x)$ given by Lemma \ref{lemma:infKtx}
  item $1.$ By Theorem \ref{th:measurableSelection} there exists a measurable function $u \in \B([0, T] \times \R^d, \U)$ such that $y^*(t, x) = b(t, x, u(t, x))$ and $z^*(t, x) \ge f(t, x, u(t, x))$. By Lemma \ref{lemma:infKtx} item $2. (a)$,
\begin{equation} \label{eq:argmin}
  u(t, x) \in \argmin_{a \in \U}~F_\beta(t, x, a), \ \forall (t,x).
  \end{equation}
  By  Proposition \ref{prop:existencePu}, there is a probability measure
  $ \P^u$ belonging to $\Pma_\U^{Markov}$.
Let also $\P \in \Pma_\U$.
  In particular
  there exists a progressively measurable process $\nu_r^\P $,
  with values in $\U$ such that under $\P$ the canonical process $X$ has decomposition
    % Let then $\P^u$ be the probability measure given by Proposition \ref{prop:existencePu} and $\P \in \Pma_\U$. Recall that by Definition \ref{def:PU}, there exists a progressively measurable process $\nu^\P$ with values in $\U$ such that under $\P$ the canonical process $X$ has decomposition
	$$
	X_t = x + \int_0^t b(r, X_r, \nu_r^\P)dr + M_t^\P, \ t \in [0,T],
	$$
	where $M^\P$ is a local martingale verifying $[ M^\P ] = \int_0^\cdot \sigma\sigma^\top(r, X_r)dr$. We want to prove that
	\begin{equation}
		\label{eq:ineqJP}
		\shj(\Q, \P) \ge \shj(\Q, \P^u).
	\end{equation}
	If $\shj(\Q, \P) = + \infty$, inequality \eqref{eq:ineqJP} is trivially verified. Assume now that $\shj(\Q, \P) < + \infty$. In particular, $H(\Q | \P) < + \infty$ and by Lemma \ref{lemma:girsanovEntropy} item $1.(a)$, there exists a process $\alpha = \alpha(\cdot, X)$ such that under $\Q$, $X$ decomposes as
	\begin{equation}
		\label{eq:entropDecompQ}
		X_t = x + \int_0^t b(r, X_r, \nu_r^\P)dr + \int_0^t \sigma\sigma^\top(r, X_r)\alpha_rdr + \tilde M^\Q_t,
	\end{equation}
	where the local martingale $\tilde M^\Q$ verifies $[\tilde M^\Q] = \int_0^\cdot \sigma\sigma^\top(r, X_r)dr$,
	and
	\begin{equation}
		\label{eq:ineqEntropQ}
		H(\Q | \P) \ge \frac{1}{2}\E^\Q\left[\int_0^T |\sigma^\top(r, X_r)\alpha(r, X)|^2dr\right].
	\end{equation}
	Identifying the bounded variation and the local martingale parts in \eqref{eq:decompQ} and \eqref{eq:entropDecompQ} yields $\sigma^\top(t, X_t)\alpha(t, X) = \sigma^{-1}(t, X_t)(\beta(t, X_t) - b(t, X_r, \nu_t^\P))$ $d\Q \otimes dt$-a.e. and $\tilde M^\Q = M^\Q$. Replacing in \eqref{eq:ineqEntropQ} we get
	$$
	H(\Q | \P) \ge \frac{1}{2}\E^\Q\left[\int_0^T |\sigma^{-1}(r, X_r)(\beta(r, X_r) - b(r, X_r, \nu_r^\P))|^2dr\right],
	$$
	and the previous inequality yields
	\begin{equation}
		\label{eq:interPointwise}
		\begin{aligned}
			\shj(\Q, \P) & = \E^\Q\left[\int_0^T f(r, X_r, \nu_r^\P)dr + g(X_T) \right] + \frac{1}{\epsilon}H(\Q | \P)\\
			& \ge \E^\Q\left[\int_0^T f(r, X_r, \nu_r^\P)dr + g(X_T) + \frac{1}{2\epsilon}\int_0^T |\sigma^{-1}(r, X_r)(\beta(r, X_r) - b(r, X_r, \nu_r^\P))|^2dr \right].
		\end{aligned}
	\end{equation}
	By assumption, $\E^\Q\left[\underset{0 \le r \le T}{\sup}|X_r|^p\right] < + \infty$ and by \eqref{eq:linearGrowthbSigma} and \eqref{eq:polyGrowthfg} we have
	\begin{equation}
		\label{eq:finiteBF}
		\E^\Q\left[\int_0^T \left(|b(r, X_r, \nu_r^\P)| + |f(r, X_r, \nu_r^\P)|\right)dr\right] < + \infty.
              \end{equation}
An application of Fubini's theorem, the tower property and Jensen's inequality for conditional expectation 
                     %A direct application of Jensen's inequality and Fubini's theorem for the conditional expectation
in \eqref{eq:interPointwise} gives
	\begin{equation}
		\label{eq:interPointwise2}
		\begin{aligned}
			\shj(\Q, \P) & \ge \E^\Q\left[\int_0^T \E^\Q\left[f(r, X_r, \nu_r^\P)\middle| X_r\right]dr + g(X_T) \right.\\
			& + \left.\frac{1}{2\epsilon}\int_0^T \left|\sigma^{-1}(r, X_r)\left(\beta(r, X_r) -\E^\Q\left[b(r, X_r, \nu_r^\P) \middle| X_r\right]\right)\right|^2dr \right].
		\end{aligned}
	\end{equation}
	Since \eqref{eq:finiteBF} holds, Lemma \ref{lemma:condExpConvex} applied with $(y_t, z_t) = \left(b(t, X_t, \nu_t^\P), f(t, X_t, \nu_t^\P)\right)$
	gives the existence of a function $v \in \shb([0, T] \times \R^d, \U)$ such that for almost all $t \in [0, T]$, $\P$-a.s.
	\begin{equation}
		\label{eq:markovControlJensen}
		\left\{
		\begin{aligned}
			& \E^\Q\left[b(t, X_t, \nu_t^\P)\middle|X_t\right] = b(t, X_t, v(t, X_t))\\
			& \E^\Q\left[f(t, X_t, \nu_t^\P)\middle|X_t\right] \ge f(t, X_t, v(t, X_t)).
		\end{aligned}
		\right.
	\end{equation}
	Injecting \eqref{eq:markovControlJensen} in \eqref{eq:interPointwise2} we get
	$$
	\shj(\Q, \P) \ge \E^\Q\left[\int_0^T f(r, X_r, v(r, X_r))dr + g(X_T)\right] + \frac{1}{2\epsilon}\E^\Q\left[\int_0^T |\sigma^{-1}(r, X_r)(\beta(r, X_r) - b(r, X_r, v(r, X_r)))|^2dr\right].
	$$
	The previous inequality rewrites
	$$
	\shj(\Q, \P) \ge \E^\Q\left[\int_0^T F_\beta(r, X_r, v(r, X_r))dr + g(X_T)\right],
	$$
        where we recall that $F_\beta$ was defined in \eqref{eq:fBeta}.
	%By the very definition of $u$ 
        By \eqref{eq:argmin},
        for all $t \in [0, T]$ we have
	$$
	F_\beta(t, X_t, v(t, X_t)) \ge F_\beta(t, X_t, u(t, X_t))~\Q\text{-a.s.},
	$$
	hence
	\begin{equation}
		\label{eq:infJQP}
		\shj(\Q, \P) \ge \E^\Q\left[\int_0^T F_\beta(r, X_r, v(r, X_r))dr + g(X_T)\right] \ge \E^\Q\left[\int_0^T F_\beta(r, X_r, u(r, X_r))dr + g(X_T)\right].
	\end{equation}
	In particular,
	$$
	\E^\Q\left[\int_0^T |\sigma^{- 1}(r, X_r)(b(r, X_r, u(t, X_t)) - \beta(r, X_r))|^2dr\right] < + \infty.
	$$
By Remark \ref{rmk:uniqDecomp} the equation  \eqref{eq:decompPRem}
% $\nu^\P_t = u(t, X_t)$
admits a unique solution. Therefore we can apply item $2.$ of Lemma \ref{lemma:girsanovEntropy}  with $\delta(t, X) = b(t, X_t, u(t, X_t))$
and $\gamma(t, X) = \beta(t, X_t)$, and we have
	$$
	H(\Q | \P) = \frac{1}{2}\E^\Q\left[\int_0^T |\sigma^{- 1}(r, X_r)(b(r, X_r, u(t, X_t)) - \beta(r, X_r))|^2dr\right],
	$$
	hence
	$$
	\E^\Q\left[\int_0^T F_\beta(r, X_r, u(r, X_r))dr + g(X_T)\right] = \shj(\Q, \P^u)
	$$
	and previous inequality along with \eqref{eq:infJQP} yields
	$
	\shj(\Q, \P) \ge \shj(\Q, \P^u).
	$
\end{proof}

\subsection{Exponential twist subproblem} \label{sec:markovDrift}
%-------------------------------------------------

In this section we focus on the minimization $\underset{\Q \in \Pma(\Omega)}{\inf} \shj(\Q, \P)$, $\P \in \Pma_\U^{Markov}$ being the reference probability.
Let us denote $\Q^*$  the solution of
% $\underset{\Q \in \Pma(\Omega)}{\inf} \shj(\Q, \P)$
that problem
given by Proposition \ref{prop:markovExistenceMinimizer}.

\begin{comment}
\begin{prop}
  \label{prop:markovianDrift}
	Assume that, under $\P$, the canonical process decomposes as
	$$
	X_t = x + \int_0^t b(r, X_r, u(r, X_r))dr + M_t^\P,
	$$
	where $M^\P$ is a local martingale such that $[ M^\P] = \int_0^{\cdot} \sigma\sigma^\top(r, X_r)dr$ and $u \in \B([0, T] \times \R^d, \U)$. Then there exists $\lambda \in \B([0, T] \times \R^d, \R^d)$ such that,
        under $\Q^*$, the canonical process decomposes as
	$$
	X_t = x + \int_0^t b(r, X_r, u(r, X_r))dr + \int_0^t \lambda(r, X_r)dr + M_t^{\Q^*},
	$$
	where $M^{\Q^*}$ is a local martingale such that $[ M^{\Q^*}] = \int_0^{\cdot} \sigma\sigma^\top(r, X_r)dr$. Moreover, $X$ is a Markov process under $\Q^*$ and $\lambda \in L^q(dt\otimes d\Q^*)$ for all $1 \le q < 2$.
      \end{prop}
      
\begin{proof}
	Recall that by Remark \ref{rmk:stroockVaradhan}, $\P$ is a solution in law of the SDE
	$$
	dX_t = b(t, X_t, u(t, X_t))dt + \sigma(t, X_t)dW_t, ~X_0 = x.
	$$
	The result is a consequence of
        Corollary 6.12 in \cite{BORMarkov2023} and again Remark
        \ref{rmk:stroockVaradhan}.
\end{proof}

\end{comment}
	\begin{prop}
		\label{prop:markovianDrift}
		Assume that, under $\P$, the canonical process decomposes as
		$
		X_t = x + \int_0^t b(r, X_r, u(r, X_r))dr + M_t^\P,
		$
		where $M^\P$ is a local martingale such that $[ M^\P] = \int_0^{\cdot} \sigma\sigma^\top(r, X_r)dr$ and $u \in \B([0, T] \times \R^d, \U)$. Then there exists $\beta \in \B([0, T] \times \R^d, \R^d)$ such that, under $\Q^*$, the canonical process decomposes as 
 		\begin{equation}\label{eq:betalambda}
		X_t = x +  \int_0^t \beta(r, X_r)dr + M_t^{\Q^*},
		\end{equation}
		where $M^{\Q^*}$ is a local martingale such that $[ M^{\Q^*}] = \int_0^{\cdot} \sigma\sigma^\top(r, X_r)dr$. Moreover, $X$ is a Markov process under $\Q^*$ and $\beta  \in \B([0, T] \times \R^d;\R^d)$
                such that
                % $ (t,\omega) \mapsto \beta(t,X_t(\omega))
            $ \vert \beta \vert    \in L^q(dt\otimes d\Q^*)$,
 for all $1 \le q < 2$.
	\end{prop}
        
	\begin{proof}
		Recall that by Remark \ref{rmk:stroockVaradhan}, $\P$ is a solution in law of the SDE
		$$
		dX_t = b(t, X_t, u(t, X_t))dt + \sigma(t, X_t)dW_t, ~X_0 = x.
		$$
                By Corollary 6.12 in \cite{BORMarkov2023}
                with $\Q = \Q^*$,
                there is $\lambda  \in \B([0, T] \times \R^d;\R^d)$
                belonging to 
                $ L^q(dt\otimes d\Q^*)$, for
                all $1 \le q < 2$, such that
\eqref{eq:betalambda} holds with
$$ \beta(t,x) = b(t,x,u(t,x)) + \lambda(t,x).$$
$ (t,\omega) \mapsto b(t,X_t(\omega),u(t,X_t(\omega)
                \in L^q(dt\otimes d\Q^*)$,
% $\beta \in L^q(dt\otimes d\Q^*)$ since
                taking into account
\eqref{eq:linearGrowthbSigma}.
              %  in
               % Hypothesis \ref{hyp:coefDiffusion}
%$\lambda \in L^q(dt\otimes d\Q^*)$ 
%
The result 
% is a direct application of Corollary 6.12 in \cite{BORMarkov2023} and
follows again by Remark
		\ref{rmk:stroockVaradhan}.
	\end{proof}

\section{Application to the control of thermostatic loads in power systems} \label{sec:example}

\setcounter{equation}{0}

We consider in this section the problem of controlling a large, heterogeneous population of $N$ air-conditioners in order that their overall consumption tracks a given target profile $r = (r_t)_{0\leq t\leq T}$,  on a given time horizon $[0,T]$. This problem was introduced in \cite{FullyBackward}. Air-conditioners are aggregated in $d$ clusters indexed by $1 \le i \le d$, depending on their characteristics. We denote by $N_i$ the number of air-conditioners in the cluster $i$. Individually, the temperature $X^{i, j}$ in the room with air-conditioner $j$ in cluster $i$, is assumed to evolve according to the dynamics
\begin{equation}
	\label{eq:individualTempSDE}
	dX_t^{i, j} = -\theta^i(X_t^{i, j} - x_{out}^i)dt - \kappa^iP_{max}^iu^{i, j}_t dt + \sigma^{i, j}dW_t^{i, j}, ~X_0^{i, j} = x_0^{i, j}, 1 \le i \le d, 1 \le j \le N_i,
\end{equation} 
where  $x_{out}^i$ is the outdoor temperature, $\theta^i$ is a positive thermal constant,
%%% Si isolement complet \theta = 0
$\kappa^i$ is the heat exchange constant and $P^i_{max}$ is the maximal power consumption of an air-conditioner in cluster $i$. $W^{i, j}$ are independent Brownian motion that represent random temperature fluctuations inside the rooms, such as a window or a door opening. For each cluster, a \textbf{local controller} decides at each time step, to turn $ON$ or $OFF$ some conditioners in the cluster $i$ by setting $u^{i, j} = 1$ or $0$, in order to satisfy a \textbf{prescribed proportion} of active air-conditioners. We are interested in the global planner problem which consists in computing the prescribed proportion $u^i = \frac{1}{N_i}\sum_{j = 1}^{N_i} u^{i, j}$ of air conditioners ON in each cluster in order to track the given target consumption profile $r = (r_t)_{0\leq t\leq T}$. For each $1 \le i \le d$ the average temperature $X^i = \frac{1}{N}\sum_{j = 1}^{N_i} X^{i, j}$ in the cluster $i$ follows the aggregated dynamics
\begin{equation}
	\label{eq:aggregatedTempSDE}
	dX_t^{i} = -\theta^i(X_t^{i} - x_{out}^i)dt - \kappa^iP_{max}^iu^{i}_t dt + \sigma^{i}dW_t^{i}, ~X_0^{i} = x_0^{i},
\end{equation}
with
$$
W_t^i = \frac{1}{N_i}\sum_{j = 1}^{N_i}W_t^{i, j}, ~\sigma^i = \frac{1}{N_i}\sum_{j = 1}^{N_i} \sigma^{i, j} ~\text{and}~x_0^{i} = \frac{1}{N_i}\sum_{j = 1}^{N_i}x_0^{i, j}.
$$
We consider the stochastic control Problem \eqref{eq:controlProblemIntro} on the time horizon $[0,T]$ with $\U = [0, 1]^d$ and $T = 2h$. The running cost $f$ is defined for any $(t, x, u) \in [0, T] \times \R^d \times \U$, such that
\begin{equation}
	\label{eq:runningCostExample}
	f(t, x, u) := \mu\left(\sum_{i=1}^d \rho_i u_i - r_t\right)^2 + \frac{1}{d}\sum_{i=1}^d\left(\gamma_i(\rho_i u_i)^2 + \eta_i (x_i - x_{max}^i)^2_+ + \eta_i (x_{min}^i - x_i)^2_+\right)
,\end{equation}
where $\rho_i = N_iP_{max}^i/(\sum_{j = 1}^d N_jP_{max}^j)$, the first term in the above cost function penalizes the deviation of the the overall consumption $\sum_i \rho_i u^i_t$ with respect to the target consumption $r_t$, $\gamma_i$ quantifies the penalization for irregular controls in cluster $i$, while $\eta_i$ penalizes the exits of the mean temperatures in the cluster $i$, from a comfort band $[x^i_{min}, x_{max}^i]$. Finally the terminal cost is given by $g(x) = \frac{1}{d}\sum_{i = 1}^d |x^i - x_{target}^i|^2$, where $x_{target}^i$ is a target temperature for cluster $i$. Clearly the cost functions $f$ and $g$ satisfy Hypothesis \ref{hyp:costFunctionsControl}.
To estimate an optimal policy $u^*$ for this problem, we use Algorithm \ref{algo:mcEntropy} with a time step $t_{m+1}-t_m=60s$, for $m=0,\cdots M$. The parameters of the problem are the same as in \cite{FullyBackward}. We perform $N_{grid} = 100$ independent runs
of the algorithm, providing $(\hat u^i)_{1 \le i \le N_{grid}}$ estimations of an optimal control on the whole period $t_0,t_1,\cdots t_M$. For each estimation $\hat u^i$, we simulate $N_{simu} = 1000$ iid
%%% FRANCESCO.Elles servent pour quantifier le coût sur chaque trajectoire pour un contrôle donné
trajectories of the process controlled by $\hat u^i$ and compute the associated costs $(\mathcal{J}_\ell (\hat u^i))_{1 \le \ell \le N_{simu}}$. The average cost is finally estimated by $\mathcal{J} = \frac{1}{N_{grid}N_{simu}}\sum_{i = 1}^{N_{grid}}\sum_{\ell = 1}^{N_{simu}}\mathcal{J}_\ell (\hat u^i)$.
%%% Francesco. Plus le coût obtenu est bas, plus on est content. Il n'y a pas de coût "benchmark".

To evaluate the performances of our approach, we compare it with the classical regression-based Monte-Carlo technique, relying on %a BSDE representation of the problem implemented
the dynamic programming principle in \cite{FullyBackward}.
%%%THIBAUT
We underline that we only aim to obtain lower costs compared to the BSDE technique in \cite{FullyBackward},
% THIBAUT
there are no benchmark costs. The results are reported in Table \ref{tab:resultsSimu}, for dimensions $d = 1, ~2, ~5, ~10, ~15, ~20$. For both methods, $N = 10^3, ~10^4, ~5 \times 10^4, ~10^5$ particles are used to estimate an optimal policy for each dimension $d$. For the entropy penalized Monte-Carlo algorithm, we use a penalization parameter $\epsilon = 70$ and $K = 20$ iterations for dimensions $d = 1, ~2, ~5, ~10$ and $\epsilon = 20$ and $K = 60$ iterations for dimensions $d = 15, ~20$. Concerning the approximation in Step 1 of the Algorithm \ref{algo:mcEntropy}, we limit ourselves to the set $\Pma_0$ of polynomials of degree $0$, since the problem is very localized in space.
%Actually it is easy to see thsat in this case, we approximate an optimal solution of Problem \eqref{eq:controlProblemIntro} with the additional constraint that the controls are constant in space.
On Table \ref{tab:resultsSimu} we can observe very good performances, that seem to be weakly sensitive to the dimensions of the problem. On Figure \ref{fig:illustrationAlternate}, we have reported the cost $\shj(\Q_k,\P_k)$ and $\shj(\P_k,\P_k) =\E^{\P_k}\left[\int_0^T f(r, X_r, u^k(r, X_r))dr + g(X_T)\right]$ as a function of the iteration number $k$, obtained
on one run of the algorithm with $d = 20$ and $N =50000$. Theses costs are compared to a reference cost obtained with a run of our algorithm with $N = 100000$ particles. As expected, $\shj(\Q_k,\P_k)$ is decreasing and converging to a limiting value. It is interesting to notice that $\shj(\P_k,\P_k)$ is also decreasing and very close to $\shj(\Q_k,\P_k)$. Hence, it seems that the parameter $\epsilon$ does not need to be as small to obtain a good approximation of the original control Problem \eqref{eq:controlProblemIntro}. 

\begin{table}[H]
	\captionsetup{font=scriptsize}
	\resizebox{\textwidth}{!}{
	\begin{tabular}{c|cc|cc|cc|cc|}
		\cline{2-9}
		& \multicolumn{2}{c|}{$\bm{N = 10^3}$}                        & \multicolumn{2}{c|}{$\bm{N = 10^4}$}                       & \multicolumn{2}{c|}{$\bm{N = 5 \times 10^4}$}              & \multicolumn{2}{c|}{$\bm{N = 10^5}$}                                          \\ \hline
		\multicolumn{1}{|c|}{\bf Method}   & \multicolumn{1}{c|}{\bf{Entropy}}          & \bf{BSDE}            & \multicolumn{1}{c|}{\bf{Entropy}}         & \bf{BSDE}            & \multicolumn{1}{c|}{\bf{Entropy}}         & \bf{BSDE}           & \multicolumn{1}{c|}{\bf{Entropy}}                            & \bf{BSDE}          \\ \hline
		\multicolumn{1}{|c|}{$\bm{d = 1}$}  & \multicolumn{1}{c|}{$7.60 (1e^{-6})$} & $7.61(6e^{-4})$ & \multicolumn{1}{c|}{$7.59(1e^{-6})$} & $7.60(3e^{-4})$ & \multicolumn{1}{c|}{$7.59(1e^{-6})$} & $7.60(3e^{-4})$ & \multicolumn{1}{c|}{$7.59(1e^{-6})$}                    & $7.60(3e^{-4})$ \\
		\multicolumn{1}{|c|}{$\bm{d = 2}$}  & \multicolumn{1}{c|}{$7.82(2e^{-6})$}  & $8.24(7e^{-2})$ & \multicolumn{1}{c|}{$7.79(5e^{-7})$} & $7.77(1e^{-3})$ & \multicolumn{1}{c|}{$7.78(5e^{-7})$} & $7.79(2e^{-4})$ & \multicolumn{1}{c|}{$7.78(5e^{-7})$}                    & $7.78(1e^{-4})$ \\
		\multicolumn{1}{|c|}{$\bm{d = 5}$}  & \multicolumn{1}{c|}{$7.34(2e^{-6})$}  & $14.83(0.64)$   & \multicolumn{1}{c|}{$7.30(5e^{-7})$} & $7.69(6e^{-2})$ & \multicolumn{1}{c|}{$7.30(3e^{-7})$} & $7.28(2e^{-3})$ & \multicolumn{1}{c|}{$7.30(3e^{-7})$} & $7.27(8e^{-4})$ \\
		\multicolumn{1}{|c|}{$\bm{d = 10}$} & \multicolumn{1}{c|}{$5.96(2e^{-6})$}  & $28.14(0.64)$   & \multicolumn{1}{c|}{$5.88(8e^{-7})$} & $16.06(0.38)$   & \multicolumn{1}{c|}{$5.87(5e^{-7})$} & $7.96(0.25)$    & \multicolumn{1}{c|}{$5.87(4e^{-7})$}                    & $6.12(0.08)$    \\
		\multicolumn{1}{|c|}{$\bm{d = 15}$} & \multicolumn{1}{c|}{$9.15(7e^{-5})$}  & $37.91(0.60)$   & \multicolumn{1}{c|}{$8.32(2e^{-5})$} & $32.20(0.63)$   & \multicolumn{1}{c|}{$8.11(5e^{-6})$} & $26.69(0.65)$   & \multicolumn{1}{c|}{$8.08(3e^{-6})$}                    & $22.54(0.56)$   \\
		\multicolumn{1}{|c|}{$\bm{d = 20}$} & \multicolumn{1}{c|}{$8.80(4e^{-5})$}  & $34.83(0.45)$   & \multicolumn{1}{c|}{$7.91(1e^{-5})$} & $30.66(0.59)$   & \multicolumn{1}{c|}{$7.71(3e^{-6})$} & $26.21(0.69)$   & \multicolumn{1}{c|}{$7.68(2e^{-6})$}                    & $23.26(0.59)$   \\ \hline
	\end{tabular}}
	\caption{\label{tab:resultsSimu} Simulated costs (within parenthesis, standard deviation) for the relative entropy penalization scheme and a classical BSDE scheme.}
\end{table}

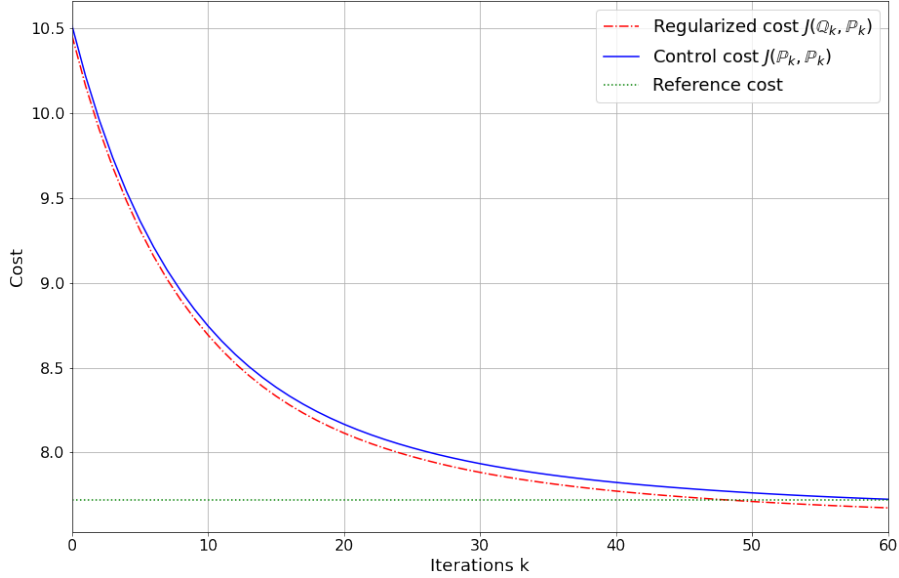
\begin{figure}[H]
	\captionsetup{font=scriptsize}
	\centering
	% This file was created with tikzplotlib v0.10.1.
\begin{tikzpicture}

\definecolor{darkgray176}{RGB}{176,176,176}
\definecolor{green01270}{RGB}{0,127,0}
\definecolor{lightgray204}{RGB}{204,204,204}

\begin{axis}[
legend cell align={left},
legend style={fill opacity=0.8, draw opacity=1, text opacity=1, draw=lightgray204},
tick align=outside,
tick pos=left,
x grid style={darkgray176},
xlabel={Iterations k},
xmajorgrids,
xmin=0, xmax=60,
xtick style={color=black},
y grid style={darkgray176},
ylabel={Cost},
ymajorgrids,
ymin=7.5320138, ymax=10.6586582,
ytick style={color=black},
width=13cm,
height=8.7cm
]
\addplot [thick, red, dash pattern=on 1pt off 3pt on 3pt off 3pt]
table {%
0 10.457565
1 10.159737
2 9.9037
3 9.680162
4 9.483708
5 9.309996
6 9.156863
7 9.02008
8 8.898537
9 8.790036
10 8.6922
11 8.604355
12 8.525188
13 8.454178
14 8.390079
15 8.332146
16 8.279795
17 8.232452
18 8.189526
19 8.150406
20 8.114646
21 8.082044
22 8.052214
23 8.024855
24 7.999684
25 7.97637
26 7.95478
27 7.934791
28 7.916261
29 7.899079
30 7.883161
31 7.868419
32 7.854724
33 7.842043
34 7.830177
35 7.819062
36 7.808625
37 7.798831
38 7.789609
39 7.780926
40 7.772792
41 7.765153
42 7.75792
43 7.75108
44 7.744588
45 7.738418
46 7.73254
47 7.726944
48 7.721608
49 7.716513
50 7.71164
51 7.707044
52 7.702682
53 7.698519
54 7.694547
55 7.69074
56 7.68711
57 7.683641
58 7.680325
59 7.677165
60 7.674134
};
\addlegendentry{Regularized cost $\mathcal{J}(\mathbb{Q}_k, \mathbb{P}_k)$}
\addplot [thick, blue]
table {%
0 10.516538
1 10.217568
2 9.960647
3 9.736388
4 9.539332
5 9.365114
6 9.211569
7 9.074421
8 8.952563
9 8.843786
10 8.745694
11 8.657619
12 8.578245
13 8.507057
14 8.4428
15 8.384725
16 8.332247
17 8.284786
18 8.241752
19 8.202535
20 8.166683
21 8.133998
22 8.10409
23 8.076655
24 8.051415
25 8.028031
26 8.006377
27 7.986324
28 7.967735
29 7.950496
30 7.934527
31 7.919738
32 7.906001
33 7.893284
34 7.881384
35 7.870236
36 7.859769
37 7.849945
38 7.840694
39 7.831983
40 7.823825
41 7.816162
42 7.808906
43 7.802044
44 7.795531
45 7.789339
46 7.783439
47 7.777822
48 7.772465
49 7.767349
50 7.762456
51 7.757842
52 7.753464
53 7.749285
54 7.745298
55 7.741477
56 7.737832
57 7.73435
58 7.73102
59 7.727848
60 7.724804
};
\addlegendentry{Controlled cost $\mathcal{J}(\mathbb{P}_k, \mathbb{P}_k)$}
\addplot [thick, green01270, dotted]
table {%
0 7.72
60 7.72
};
\addlegendentry{Optimal cost}
\end{axis}

\end{tikzpicture}
	\caption{Costs associated with the iterates, generated by the entropy penalized Monte-Carlo algorithm in dimension $d = 20$ with $N = 50000$.}
	\label{fig:illustrationAlternate}
\end{figure}

%%%%%%%%%%%%%%%%%%%%%%%%%%%%%%%%%%%%%%%%%%%%%
\section{Conclusion and perspectives}
\label{S6}
%%%%%%%%%%%%%%%%%%%%%%%%%%%%%%%%%%%%%%%%%%%%%

\setcounter{equation}{0}
In this paper we have proposed an original approach to treat
stochastic optimal control problems, regarded  as optimization programs
on the space of probability measures,
based on an entropy penalized formulation. In particular this has allowed us to design an alternating
minimization procedure to tackle those problems.
One additional interest of this entropy penalized formulation is that it can be naturally extended to treat control problems with more complex constraints
of the form
\begin{equation}
	\label{eq:genProblem}
	\inf_{\P \in \sha \cap \shb} \E^\P\left[\int_0^T f(r, X_r, \nu_r^\P)dr + g(X_T)\right],
\end{equation}
with a general admissible set of the form $\sha \cap \shb$ where
%%%  Faire apparaître
%% \sha = \pma(\Omega) et \shb = \shp_U %%%
%ce qui nous permet d'effectuer le "pointwise minimization.
$\sha$ is a convex subset of $\shp(\Omega)$ and
 $\shb$ is a subset of $\shp(\Omega),$
describing a class of controlled dynamics, which fulfills some technical conditions.
A typical example appears when
$ \sha = \{ \P \in \shp(\Omega) : \P_T = \mu_T\},$
where $\mu_T$ is a prescribed (terminal) law
%whereas $\shb$ is a subset $\shp(\Omega),$
%describing a class of controlled dynamics fulfilling some technical conditions.
and $\shb$ imposes an initial law $\mu_0$.
Problem \eqref{eq:genProblem}
%%  On voit bien la perspective sur \sha mais pas sur \shb
%%% qui se veut généraliser la condition sur U et la condition terminale.
then corresponds in this example to a stochastic control problem with prescribed
initial and terminal distributions, typically encountered in the fields of martingale optimal transport or Schr\"odinger Bridge problems. 
We remark that this formulation  covers in particular the one of the present paper
setting $\sha =  \shp(\Omega)$ and $\shb = \shp_\U$.

The idea is then to extend the splitting approach of our entropy penalized method,
% dividing the family  of constraints in the two sets $\sha$ and $\shb$,
leading us to two simpler subproblems,
each one taking into account separately the constraints sets $\sha$ and $\shb$.
This is the object of a paper in preparation.
%see \cite{BORTargetLaw}.

%That problem can be approximated by splitting the variables and adding an entropy penalization term to obtain the problem
%$$
%\inf_{\Q \in \sha, \P \in \shb} \E^\Q\left[\int_0^T f(r, X_r, \nu_r^\P)dr + g(X_T)\right] + \frac{1}{\epsilon}H(\Q | \P) := \inf_{\Q \in \sha, \P \in \shb} \shj_\epsilon(\Q, \P).
%$$
%This naturally leads to an alternating minimization approach consisting in sequentially solving the subproblems
%$\underset{\Q \in \sha}{\inf} \shj_\epsilon(\Q, \P)$ and $\underset{\P \in \shb}{\inf}\shj_\epsilon(\Q, \P)$. This approach is particularly interesting when each of the subproblem is easy to solve.

\appendix
\section*{Appendices}

\label{appendix}

\section{Relative entropy related results}
\setcounter{equation}{0}
\renewcommand\theequation{A.\arabic{equation}}
% ------------------------------------------------------------------
Let $(\Omega, \F, (\F_t)_{t \in [0, T]}, \P)$ be a filtered probability
	space.  Let $\delta = (\delta_t)_{t \in [0,T]}$ (resp. $a = (a_t)_{t \in [0,T]}$)
	be a progressively measurable process with values in $\R^d$
	(resp. in the set of square $d \times d$
	non-negative defined
	symmetric matrices $S_d^+$).
	Let $X$ be a continuous process, which decomposes as
	\begin{equation}  \label{eq:decXP}
		X_t = x + \int_0^t \delta_r dr + M^\P_t, ~0 \le t \le T,
	\end{equation}
	where $M^\P$ is a continuous $((\F_t), \P)$-local martingale such that $[ M^\P ] = \int_0^{\cdot} a_rdr$.

The theorem below is the Girsanov's theorem under a finite
relative entropy assumption.
\begin{theorem}
  \label{th:girsanovEntropy}
 % Let $\delta = (\delta_t)_{t \in [0,T]}$ (resp. $a = (a_t)_{t \in [0,T]}$)
%	be a progressively measurable process with values in $\R^d$
%	(resp. in the set of square $d \times d$
%	non-negative defined
%	symmetric matrices $S_d^+$).
        Let $\Q$ be a probability measure on $(\Omega, \F)$ such that $H(\Q | \P) < + \infty$.
 Then there exists an $\R^d$-valued
	progressively measurable process $\alpha$ such that
	\begin{equation} \label{eq:alphaInt}
		\E^{\Q}\left[\int_0^T \alpha_r^\top a_r \alpha_rdr\right] < + \infty,
	\end{equation}
	and such that, under $\Q$, the process $X$ is still a continuous semimartingale with decomposition
	\begin{equation} \label{eq:alpha}
		X_t = x + \int_0^t \delta_rdr + \int_0^t a_r\alpha_rdr + M^\Q_t, ~0 \le t \le T,
	\end{equation}
	where $M^\Q$ is a continuous $\Q$-local martingale and $[ M^\Q ] = \int_0^{\cdot} a_rdr$. Furthermore,
	\begin{equation}
		\label{eq:subBoundEntropy}
		\frac{1}{2}\E^{\Q}\left[\int_0^T \alpha_r^\top a_r \alpha_rdr\right] \le H(\Q | \P).
	\end{equation}
\end{theorem}
\begin{proof}
  The fact that $H(\Q | \P)  < + \infty$ implies in particular that $\Q \ll \P$. Let then $Z_T := d\Q/d\P$ and $(Z_t)_{t \in [0, T]}$ be
  the c\`adl\`ag $\P$-modification of the martingale $\left(\E^\P\left[Z_T | \shf_t\right]\right)_{t \in [0, T]}$. By Theorem 3.24, Chapter III in \cite{JacodShiryaev}, there exists a progressively measurable  process $\alpha$ such that decomposition \eqref{eq:alpha} holds and
	\begin{equation}
		\label{eq:QFinite}
		\int_0^\cdot \alpha_r^\top a_r \alpha_r dr < + \infty \quad \Q\text{-a.s.}
              \end{equation}
              %%%%  Il faut faire attention car on n'est pas sur le
              %%% meme espace probabilise. C'est une version plus "rough"
	as well as
               %%%  Si une propriété est vraie $\P$-a.s., alors elle est vraie $\Q$-a.s. ? En particulier \eqref{eq:QFinite} n'implique pas que c'est vrai $\P$-a.s.
	\begin{equation}
          %%%  A priori le crochet existe sous P.
          %%% Il faut commencer à dire qu'il est défini sous Q.
		\label{eq:bracketQ}
		[Z, M^\P] = \int_0^\cdot a_r\alpha_rZ_{r-}dr,
                %\quad \P\text{-a.s., hence}~\Q\text{-a.s.}
              \end{equation}
              with respect to $\P$, so also with respect to $\Q$.
              %%% C'est une limite ucp.
            
              Let then  $\tau_k := \inf\left\{t \in [0, T]~:~\int_0^t\alpha_r^\top a_r \alpha_rdr > k\right\}$, with the convention that $\inf \emptyset = + \infty$.
              Setting $M^k := \int_0^{.\wedge \tau_k}\alpha_r^\top dM^\P_r$ and $Z^k$ the Doléans exponential $ \mathcal{E}(M^k)$, we define $d\Q_k := Z^k_Td\P$. By Novikov's criterion (see Proposition 1.15, Chapter VIII in \cite{RevuzYorBook}),
              $Z^k$ is a martingale, therefore $\Q_k$ is a probability measure on $(\Omega, \shf)$ equivalent to $\P$ since $Z^k_T$ is strictly positive $\P$-a.s. As $\Q \ll \P$ and $\Q_k \sim \P$, we have $\Q \ll \Q_k$.
              %%%  Je commencerai par rapport à l'égalité plus fine, c'à dire par rapport à P
              % Moreover, $d\Q/d\P > 0$ $\Q$-a.s., $d\Q/d\Q_k > 0$ $\Q$-a.s. and since $d\Q_k/d\P > 0$
%%% Oui car il est d'abord P a.s.
%$\Q_k$-a.s. and $\Q \ll \Q_k$, we also have $d\Q_k/d\P > 0$ $\Q$-a.s.
              It follows that $\P$-a.s., with the notation $\log (0) = -\infty$,
              and later $0 \log (0) = 0$.
              %$\Q$-a.s.,
	\begin{equation}
		\label{eq:logDensity}
		\begin{aligned}
			\log \frac{d\Q}{d\P} & = \log \frac{d\Q}{d\Q_k} + \log \frac{d\Q_k}{d\P}\\
			& = \log \frac{d\Q}{d\Q_k} + \log Z_T^k\\
			& = \log \frac{d\Q}{d\Q_k} + \int_0^{T \wedge \tau_k}\alpha^\top_rdM_r^\P - \frac{1}{2}\int_0^{T \wedge \tau_k}\alpha_r^\top a_r \alpha_r dr.
			%& = \int_0^{T \wedge \tau_k}\alpha_r^\top dM_r^\Q + \frac{1}{2}\int_0^{T \wedge \tau_k}\alpha_r^\top a_r \alpha_r dr.
		\end{aligned}
              \end{equation}
              Previous equality can be of course considered also $\Q$-a.s. since
              $\Q$ is ''rougher'' than $\P$.
%%%%  Les crochets sont entendus par rappor à P a priori d'après Jacod Shiryaev
Setting $\bar M := M^\P - \int_0^\cdot a_r\alpha_rdr$,
%  Theorem 3.11, Chapter III in \cite{JacodShiryaev} states that,
%  $M^\P - \int_0^\cdot \frac{1}{Z_{r-}}d[Z, M^\P]_r$ is a $\Q$-local martingale, hence by \eqref{eq:bracketQ}, the process $\bar M := M^\P - \int_0^\cdot a_r\alpha_rdr$ is a $\Q$-local martingale.
equality  \eqref{eq:logDensity} rewrites
%%% C'était \eqref{eq:logDensity1} 
%%%qui  n'est pas définie
\begin{equation}
		\label{eq:logDensity2}
		\log \frac{d\Q}{d\P} = \log \frac{d\Q}{d\Q_k} + \int_0^{T \wedge \tau_k}\alpha^\top_rd\bar M_r + \frac{1}{2}\int_0^{T \wedge \tau_k}\alpha_r^\top a_r \alpha_r dr \quad \Q\text{-a.s.}
              \end{equation}
              Taking into account  \eqref{eq:bracketQ},
              Theorem 3.11, Chapter III in \cite{JacodShiryaev} states that, $\bar M$ is a
              $\Q$-local martingale.
              %hence by \eqref{eq:bracketQ}, the process $\bar M := M^\P - \int_0^\cdot a_r\alpha_rdr$ is a $\Q$-local martingale.
              %
              Since,
              % with respect to $\P$, also
              still with respect to $\Q$,
$\left[\int_0^\cdot \alpha_r^\top d\bar M_r\right] = \int_0^\cdot \alpha_r^\top a_r \alpha_rdr$, by definition of $\tau_k$, the process $\int_0^{\cdot \wedge \tau_k}\alpha_r^\top d\bar M_r$ is a genuine $\Q$-martingale. Consequently,
taking the expectation under $\Q$ in \eqref{eq:logDensity2} gives
	$$
	H(\Q|\P) = H(\Q|\Q_k) + \frac{1}{2}\E^\Q\left[\int_0^{T \wedge \tau_k}\alpha^\top a_r \alpha_rdr\right] \ge \frac{1}{2}\E^\Q\left[\int_0^{T \wedge \tau_k}\alpha^\top a_r \alpha_rdr\right].
	$$
	Since $\tau_k \underset{k \rightarrow + \infty}{\longrightarrow} + \infty$ increasingly $\Q$-a.s. by \eqref{eq:QFinite}, a direct application of the monotone convergence theorem then yields
	\begin{equation*}
		H(\Q|\P) \ge \frac{1}{2}\E^{\Q}\left[\int_0^T \alpha_r^\top a_r \alpha_rdr\right].
	\end{equation*}
\end{proof}
For the following lemma let again $X$ be a process, as at the beginning of the section
fulfilling
\eqref{eq:decXP}, this time
%keep the notations and assumptions of Theorem \ref{th:girsanovEntropy}.
%%% C'est trop vague. Il faudrait remettre les hypothèses
%Let in particular $X$ be a process fulfilling
%\eqref{eq:decXP}
with $a_t = \sigma\sigma^\top(t, X_t)$.
Then by Theorem \ref{th:girsanovEntropy} there is
a progressively measurable process $\alpha$
such that \eqref{eq:alpha} holds.
%Let $\alpha$ be the process appearing in decomposition \eqref{eq:alpha}.
For that we have the following estimates.
\begin{lemma}
  \label{lemma:estimateBetaEntrop}
 	We suppose the existence of $1 < p < 2$
	such that
	\begin{equation*} 
	C_p := \E^{\P}\left[\int_0^T \|\sigma(r, X_r)\|^{2p/(2 - p)}dr\right] < + \infty.
	\end{equation*}
 Let $\Q$ be a probability measure on $(\Omega, \F)$ such  that $H(\Q | \P) < + \infty$.

        \begin{enumerate}
		\item If $C_\infty := \|d\Q/d\P\|_\infty < + \infty$, there exists a constant $L > 0$, which depends only on $C_p$ and $C_\infty$, such that
		\begin{equation} \label{eq:L2bis}
		\E^{\Q}\left[\int_0^T |\sigma\sigma^\top(r, X_r)\alpha_r|^pdr\right] \le L(1 + H(\Q | \P)).
		\end{equation}
		\item Suppose
		moreover $H(\P | \Q) < + \infty$. Then it holds that
		\begin{equation} \label{eq:reverseEntropy}
		\frac{1}{2}\E^{\P}\left[\int_0^T |\sigma^\top(r, X_r)\alpha_r|^2 dr\right] \le H(\P |\Q),
		\end{equation}
		and $L$ can be chosen such that both \eqref{eq:L2bis} and  
		\begin{equation} \label{eq:L2}
			\E^{\P}\left[\int_0^T |\sigma\sigma^\top(r, X_r)\alpha_r|^pdr\right] \le L(1 + H(\P | \Q)).
		\end{equation}
	\end{enumerate}
\end{lemma}
\begin{proof}
	\begin{enumerate}
		\item We recall that $H(\Q |\P) < \infty.$
		By H\"older's inequality applied on the measure space $([0, T] \times \Omega, \B([0, T])\otimes \F, dt\otimes d\Q)$, it holds that
		\begin{equation}
			\label{eq:ineqLambdaIntermediate}
			\begin{aligned}
				\E^{\Q}\left[\int_0^T |\sigma\sigma^\top(r, X_r)\alpha_r|^pdr\right] & \le 	\E^{\Q}\left[\int_0^T \|\sigma(r, X_r)\|^p|\sigma^\top(r, X_r)\alpha_r|^pdr\right]\\
				& \le \left(\E^{\Q}\left[\int_0^T \|\sigma(r, X_r)\|^{2p/(2 - p)}\right]\right)^{1 - p/2}\left(\E^{\Q}\left[\int_0^T |\sigma^\top(r, X_r)\alpha_r|^2dr\right]\right)^{p/2}.
			\end{aligned}
		\end{equation}
		On the one hand,
		\begin{equation}
			\label{eq:ineqLambda}
			\E^{\Q}\left[\int_0^T \|\sigma(r, X_r)\|^{2p/(p - 2)}dr\right] = \E^{\P}\left[\frac{d\Q}{d\P}\int_0^T \|\sigma(r, X_r)\|^{2p/(p - 2)}dr\right] \le C_\infty C_p.
		\end{equation}
		On the other hand, by \eqref{eq:subBoundEntropy} in Theorem \ref{th:girsanovEntropy},
		\begin{equation}
			\label{eq:ineqEntrop}
			\E^{\Q}\left[\int_0^T |\sigma^\top(r, X_r)\alpha_r|^2dr\right] \le 2H(\Q | \P).
		\end{equation}
		Combining \eqref{eq:ineqLambda} and \eqref{eq:ineqEntrop} with \eqref{eq:ineqLambdaIntermediate}, we get
		$$
		\E^{\Q}\left[\int_0^T |\sigma\sigma^\top(r, X_r)\alpha_r|^pdr\right] \le 2^{p/2}(C_\infty C_p)^{1 - p/2}H(\Q | \P)^{p/2},
		$$
		and as $p < 2$, using the inequality
		\begin{equation} \label{eq:aq}
			\vert a\vert^q \le (1 + \vert a \vert), \
			{\rm if} \ q \in ]0,1],
		\end{equation}
		with $q = 1 - \frac{p}{2}$ and $q = \frac{p}{2}$, we have
		$$
		\E^{\Q}\left[\int_0^T |\sigma\sigma^\top(r, X_r)\alpha_r|^pdr\right] \le
		2 (1 + C_\infty C_p)(1 + H(\Q | \P)).
		$$
		Setting
\begin{equation}\label{eq:L}
  L := 2 (1 + C_p(C_\infty \vee 1)),
\end{equation}
  one concludes the proof of item 1.

              \item 
Applying Theorem \ref{th:girsanovEntropy}, we recall the decomposition
  \eqref{eq:alpha},
  where the local martingale $M^\Q$ verifies $[ M^\Q] = \int_0^\cdot \sigma\sigma^\top(r, X_r)dr$,
  under $\Q$.
		As $H(\P | \Q) < + \infty$, interchanging $\P$ and $\Q$, 
 again  Theorem \ref{th:girsanovEntropy}  yields the existence of a progressively measurable
		process $\tilde \alpha$ such that under $\P$ the process $X$ decomposes as
                \begin{equation} \label{eq:P}
                  X_t = x + \int_0^t \delta_rdr + \int_0^t \sigma\sigma^\top(r, X_r)\alpha_rdr + \int_0^t \sigma\sigma^\top(r, X_r)\tilde \alpha_rdr + \tilde M_t,
                  \end{equation}
		where $\tilde M$ is a $\P$-local martingale such that $[\tilde M] = \int_0^\cdot \sigma\sigma^\top(r, X_r)dr$
		and
		$$
		\frac{1}{2}\E^{\P}\left[\int_0^T |\sigma^\top(r, X_r)\tilde \alpha_r|^2dr\right] \le H(\P | \Q).
		$$
		Identifying the bounded variation and the martingale
                components of $X$ under $\P$, in \eqref{eq:P} and \eqref{eq:decXP},
                we get that $\tilde M = M^\P$ and $\sigma\sigma^\top(t, X_t)\tilde \alpha_t = - \sigma\sigma^\top(t, X_t) \alpha_t$ $dt \otimes d\P$-a.e. In particular, 
		\eqref{eq:reverseEntropy} holds.
%			\frac{1}{2}\E^{\P}\left[\int_0^T |\sigma^\top(r, X_r)\alpha_r|^2dr\right] \le H(\P | \Q).
%		\end{equation}
		Then,		as in the proof of item $1.$, H\"older's inequality,
		\eqref{eq:ineqLambdaIntermediate} with $\Q$ replaced by $\P$,   
		and  \eqref{eq:reverseEntropy}
		yield
		\begin{equation*}
			\begin{aligned}
				\E^{\P}\left[\int_0^T |\sigma\sigma^\top(r, X_r)\alpha_r|^pdr\right] & \le \left(\E^{\P}\left[\int_0^T \|\sigma(r, X_r)\|^{2p/(2 - p)}\right]\right)^{1 - p/2}\left(\E^{\P}\left[\int_0^T |\sigma^\top(r, X_r)\alpha_r|^2dr\right]\right)^{p/2}\\
				& \le 2^{p/2}C_p^{1 - p/2}H(\P | \Q)^{p/2} \le 2 (1 + C_p)(1 + H(\P | \Q)),
			\end{aligned}
		\end{equation*}
		where, for the latter inequality we have used again \eqref{eq:aq}
                with $q = 1 - \frac{p}{2}$ and $q =\frac{p}{2}$
                together with  \eqref{eq:reverseEntropy}.
		%$p < 2$.
		This finally  also implies the result \eqref{eq:L2}
                with $L$ defined in \eqref{eq:L}.
%\eqref{eq:L2}.
              \end{enumerate}
\end{proof}
\begin{remark}
	\label{rmk:integrabilityQ}
	Let
	$
	\tilde C_p := \E^{\Q}\left[\int_0^T \|\sigma(r, X_r)\|^{2p/(2 - p)}dr\right].
	$
	Item $1.$ of Lemma \ref{lemma:estimateBetaEntrop} is still valid if one assumes that $\tilde C_p < + \infty$ instead of $C_p < + \infty$ and $\|d\Q/d\P\|_\infty$. One only has to replace $C_\infty C_p$ by $\tilde C_p$ in the estimates in the proof.
\end{remark}
The results of Theorem \ref{th:girsanovEntropy} can be specified if one considers probability measures on the canonical space $\Omega = C([0, T], \R^d)$. In the following, $\delta, \gamma : [0, T] \times C([0, T], \R^d)  \mapsto \R^d$ are progressively measurable functions
w.r.t. their corresponding Borel $\sigma$-fields.
A consequence of Theorem \ref{th:girsanovEntropy} in this setting is the following.
\begin{lemma}
	\label{lemma:girsanovEntropy}
	Let $\P \in \Pma(\Omega)$ such that, under $\P$ the canonical process
        can be decomposed as
	\begin{equation}
		\label{eq:lemmaSDE}
		X_t = x + \int_0^t\delta(r, X)dr + M_t^\P,
	\end{equation}
	where $M^\P$ is a martingale with
        $[ M^\P ] = \int_0^{\cdot} \sigma\sigma^\top(r, X_r)dr$, where $\sigma$ verifies item
        $(ii)$ of Hypothesis \ref{hyp:coefDiffusion}.
        Let $\Q \in \Pma(\Omega)$.
        \begin{enumerate}
        \item 
          Assume that $H(\Q |\P) < + \infty.$
          Then we have the following.
          \begin{enumerate}
        \item
      There exists a progressively measurable
      process $\alpha$,
      w.r.t. the natural filtration of $X$ (in particular
      of the form $\alpha = \alpha(\cdot, X)$) such that, under
      $\Q$, 
           $X$ decomposes as
\begin{equation}\label{eq:841}
		X_t = x + \int_0^t \delta(r, X)dr + \int_0^t\sigma\sigma^\top(r, X_r) \alpha(r, X)dr + M_t^\Q,
	\end{equation}
		where $M^\Q$ is a martingale with
                $[ M^\Q] = \int_0^\cdot \sigma\sigma^\top(r, X_r)dr$
        and 
		\begin{equation}
			\label{eq:inequalityEntropy}
			H(\Q | \P) \ge \frac{1}{2}\E^\Q\left[\int_0^T |\sigma^\top(r, X_r)\alpha(r, X)|^2dr\right].
                      \end{equation}
%In particular $X$ is a semimartingale.
\item
      	If moreover uniqueness in law holds for the SDE \eqref{eq:lemmaSDE}, equality holds in \eqref{eq:inequalityEntropy}.
      \end{enumerate}
      \item Assume that under $\Q$ the canonical process writes
    \begin{equation} \label{eq:842}
		X_t = x + \int_0^t\gamma(r, X)dr + M_t^\Q,
\end{equation}
where $M^\Q$ is a martingale with $[ M^\Q ] = \int_0^{\cdot} \sigma\sigma^\top(r, X_r)dr$ and that uniqueness in law holds for the SDE \eqref{eq:lemmaSDE}.
%Assume furthermore that $\sigma^{-1}(t, X_t)$
%exists $dt\otimes d\Q$-a.e.
%%% Generalized right-inverse citer
Let $\sigma^{-1}$ be again the generalized
                      right-inverse of $\sigma$.
                %      i.e.
                %$\sigma^\top (\sigma \sigma^\top)^{-1}$.
If 
		$$
		\E^{\Q}\left[\int_0^T |\sigma^{- 1}(r, X_r)(\delta(r, X) - \gamma(r, X))|^2dr\right] < + \infty,
		$$
                then	$H(\Q | \P) < + \infty$ and
	\begin{equation}
			\label{eq:equalityEntropy}
			H(\Q|\P) = \frac{1}{2}\E^{\Q}\left[\int_0^T |\sigma^{- 1}(r, X_r)(\delta(r, X) - \gamma(r, X))|^2dr\right].
		\end{equation}
                
	\end{enumerate}
\end{lemma}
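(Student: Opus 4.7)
\begin{prooff}\ (Plan for Lemma \ref{lemma:girsanovEntropy}.)

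For item 1(a), the plan is a direct invocation of Theorem \ref{th:girsanovEntropy} on the canonical filtered space $(\Omega, \F, (\F_t), \P)$ taking $\delta_r := \delta(r, X)$ and $a_r := \sigma\sigma^\top(r, X_r)$, both progressively measurable with respect to the canonical filtration by assumption on $\delta$ and $\sigma$. The theorem supplies a progressively measurable process $\alpha$ on the canonical space, which by definition of that space can be written $\alpha = \alpha(\cdot, X)$, together with the decomposition \eqref{eq:841} and the inequality \eqref{eq:inequalityEntropy}.

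Item 2 is the heart of the lemma and my approach is a reverse Girsanov argument combined with uniqueness in law. Since $M^\Q$ has quadratic variation $\int_0^\cdot \sigma\sigma^\top(r, X_r) dr$, I would first write $M^\Q_t = \int_0^t \sigma(r, X_r) dW^\Q_r$ for some $\Q$-Brownian motion $W^\Q$ (possibly on an enlarged space). Setting $\beta(r, X) := \sigma^{-1}(r, X_r)(\gamma(r, X) - \delta(r, X))$ and $L_t := -\int_0^t \beta(r, X)^\top dW^\Q_r$, the square integrability assumption $\E^\Q[\int_0^T |\beta|^2 dr] < +\infty$ immediately yields that $L$ is a true square-integrable $\Q$-martingale. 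I then consider the Dol\'eans-Dade exponential $Z := \mathcal{E}(L)$. The plan is to prove that $Z$ is a uniformly integrable $\Q$-martingale on $[0, T]$, that $\tilde \P$ defined by $d\tilde\P := Z_T d\Q$ is a probability under which, by Girsanov, the canonical process $X$ satisfies \eqref{eq:lemmaSDE}, and finally that $\tilde\P = \P$ by uniqueness in law of \eqref{eq:lemmaSDE}. Once this identification is done, $d\P / d\Q = Z_T$ and $-\log Z_T = -L_T + \tfrac{1}{2}\langle L\rangle_T$, so taking $\Q$-expectation and using that $L$ is a true $\Q$-martingale gives
$$
H(\Q | \P) = \E^\Q[-\log Z_T] = \tfrac{1}{2} \E^\Q\!\left[\int_0^T |\sigma^{-1}(r, X_r)(\delta(r, X) - \gamma(r, X))|^2 dr\right],
$$
which is both the finiteness $H(\Q|\P) < +\infty$ and the equality \eqref{eq:equalityEntropy}.

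Item 1(b) will be deduced from item 2 by taking $\gamma(r, X) := \delta(r, X) + \sigma\sigma^\top(r, X_r) \alpha(r, X)$, with $\alpha$ produced by item 1(a). The required integrability of $\sigma^{-1}(\gamma - \delta) = \sigma^\top \alpha$ is exactly \eqref{eq:inequalityEntropy} combined with $H(\Q|\P) < +\infty$, so item 2 applies and upgrades the inequality \eqref{eq:inequalityEntropy} to an equality.

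The main obstacle will be step 2 of the plan, namely the verification that $Z = \mathcal{E}(L)$ is a true (and in fact uniformly integrable) $\Q$-martingale: the hypothesis $\E^\Q[\int_0^T |\beta|^2 dr] < +\infty$ does not yield Novikov's criterion, which would require an exponential moment of $\tfrac{1}{2}\int_0^T |\beta|^2 dr$. I would handle this by a localization argument with the stopping times $\tau_k := \inf\{t \in [0, T] : \int_0^t |\beta(r, X)|^2 dr \ge k\} \wedge T$: on $[0, \tau_k]$ Novikov does apply, so $Z^{\tau_k}$ is a genuine martingale, and the measures $\tilde \P_k$ with density $Z_{\tau_k}$ restricted to $\F_{\tau_k}$ make $X$ solve the stopped SDE. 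Uniqueness in law then identifies $\tilde \P_k$ with $\P$ on $\F_{\tau_k}$, and since $\tau_k \uparrow T$ $\Q$-a.s. by the $L^2$ condition, a monotone class/martingale convergence argument will pass this identification to $\F_T$ and give $d\P/d\Q = Z_T$ with the required true martingale property.
\end{prooff}
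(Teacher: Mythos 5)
Your treatment of items 1(a) and 1(b) coincides with the paper's: 1(a) is a direct application of Theorem \ref{th:girsanovEntropy} on the canonical space (where progressive measurability w.r.t.\ the canonical filtration is exactly the form $\alpha=\alpha(\cdot,X)$), and 1(b) is deduced from item 2 by taking $\gamma=\delta+\sigma\sigma^\top\alpha$, so that $\sigma^{-1}(\gamma-\delta)=\sigma^\top\alpha$ and the required integrability is \eqref{eq:inequalityEntropy}. For item 2 the paper simply invokes Lemma 4.4~(iii) of \cite{LackerHierarchies}, whereas you sketch a self-contained reverse-Girsanov argument; that is a legitimate alternative and you correctly identify that the martingale property of $Z=\mathcal{E}(L)$ is the crux, since $\E^\Q[\int_0^T|\beta|^2dr]<+\infty$ gives nothing like Novikov.

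There is, however, a gap in your final limiting step. From the local identification $\P=\tilde\P_k=Z_{\tau_k}\cdot\Q$ on $\F_{\tau_k}$ you want to conclude that $d\P/d\Q=Z_T$ with $Z$ a uniformly integrable $\Q$-martingale. But $\tau_k\uparrow T$ is only known $\Q$-a.s.\ (from the $L^2(\Q)$ hypothesis); to get $\E^\Q[Z_T]=1$ one would essentially need $\P(\lim_k\tau_k=T)=1$, which is not available and is precisely what the argument is circling around. Fatou only gives $\E^\Q[Z_T]\le 1$, and $\P\ll\Q$ may genuinely fail. The fix is to run the conclusion in the other direction, which is also the direction the entropy actually requires: since $Z_{\tau_k}>0$, the local identification gives $\Q=Z_{\tau_k}^{-1}\cdot\P$ on $\F_{\tau_k}$; writing, for $A\in\F_T$, $\Q(A\cap\{\tau_k=T\})=\E^\P[Z_T^{-1}\1_A\1_{\{\tau_k=T\}}]$ (note $A\cap\{\tau_k=T\}\in\F_{\tau_k}$) and letting $k\to\infty$ with monotone convergence and $\Q(\lim_k\tau_k=T)=1$ yields $\Q\ll\P$ with $d\Q/d\P=Z_T^{-1}\1_{\{\lim_k\tau_k=T\}}$. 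Then $H(\Q|\P)=\E^\Q[-\log Z_T]=\E^\Q[-L_T]+\tfrac12\E^\Q[\langle L\rangle_T]=\tfrac12\E^\Q[\int_0^T|\beta|^2dr]$, using that $L$ is a true square-integrable $\Q$-martingale. With that correction (and a justification that uniqueness in law for \eqref{eq:lemmaSDE} localizes to $\F_{\tau_k}$, e.g.\ via the Stroock--Varadhan local uniqueness theorem), your plan for item 2 goes through and reproduces the content of the cited lemma.
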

\begin{proof}
  Part $(a)$ of item $1.$ of Lemma  \ref{eq:lemmaSDE} is constituted by
  Theorem \ref{th:girsanovEntropy} applied to the canonical space equipped
  with the natural filtration of
  the canonical process.
  Item 2. is the object of
  Lemma 4.4 $(iii)$ in \cite{LackerHierarchies}.

  As far as item $1.(b)$ is concerned, we apply item $2.$ 
  with  $\gamma(r,X) = \delta(r, X) + \sigma\sigma^\top(r, X_r) \alpha(r, X)$
  in \eqref{eq:842} so that
  $(\gamma-\delta)(r,X) = \sigma\sigma^\top(r, X_r) \alpha(r, X)$.
  So $\sigma^{-1}(r,X_r) (\delta-\gamma)(r,X)$
  and the equality in \eqref{eq:inequalityEntropy} holds because of
    \eqref{eq:equalityEntropy}.
\end{proof}
\begin{remark} \label{remark:R81}
  By
  Hypothesis \ref{hyp:coefDiffusion} on the diffusion coefficient $\sigma$, uniqueness in law for the SDE \eqref{eq:lemmaSDE} holds e.g. if $\delta$ is bounded, or if $\delta(r, X) = b(r, X_r, u(r, X_r))$, where $b$ has linear growth in $(t, x)$ independently of $u$. This follows from Theorem 10.1.3 of \cite{stroock}.
\end{remark}

\section{Measurable selection}
\setcounter{equation}{0}
\renewcommand\theequation{B.\arabic{equation}}

The following measurable selection theorem is a direct consequence of
Theorem A.9 in \cite{HaussmanLepeltierOptimal},
setting $y= (t,x),  \phi = b, i = 1, \psi_1 = f$. 
\begin{theorem}
  \label{th:measurableSelection}
  Suppose the validity of item $1.$ of Hypothesis \ref{hyp:coefDiffusion}
  and item $2.$ of Hypothesis \ref{hyp:costFunctionsControl}.
  Let $K(t, x)$ be given by \eqref{eq:Ktx}.
Let $y \in \B([0, T] \times \R^d, \R^d)$ and $z \in \B([0, T] \times \R^d, \R)$ be two functions such that $(y(t, x), z(t, x)) \in K(t, x)$ for all $(t, x) \in [0, T] \times \R^d$.
  % Assume Hypotheses \ref{hyp:coefDiffusion} item \ref{item:bContinuous} and \ref{hyp:costFunctionsControl} item \ref{item:fgContinuous}.
  Then there exists a
        (measurable) function $u \in \B([0, T] \times \R^d, \U)$ such that
	\begin{equation*}
		\left\{
		\begin{aligned}
			& y(t, x) = b(t, x, u(t, x))\\
			& z(t, x) \ge f(t, x, u(t, x))
		\end{aligned}
		\right.~\text{for all}~(t, x) \in [0, T] \times \R^d.
	\end{equation*}
\end{theorem}
The result below is a simple consequence of Theorem \ref{th:measurableSelection}

\begin{lemma}
	\label{lemma:condExpConvex}
	Let $\Omega$ be a Polish space, and let $\shf := \shb(\Omega)$ be its Borel $\sigma$-field. Let $X : [0, T] \times \Omega \rightarrow \R^d$ and $(y, z) : [0, T] \times \Omega \rightarrow \R^{d + 1}$ be two processes on $(\Omega, \shf)$. Let $\P$ be a probability measure on $(\Omega, \shf)$.  Assume Hypothesis \ref{hyp:convexSet} and that $\E^\P\left[\int_0^T|y_r|dr\right] < + \infty$ and $\E^\P\left[\int_0^T |z_r| dr\right] < + \infty$. Assume moreover that $(y_t, z_t) \in K(t, X_t)$ for almost all $t \in [0, T]$, $\P$-a.s. Then there exists a function $u \in \shb([0, T] \times \R^d, \U)$ such that for almost all $t \in [0, T]$, $\P$-a.s.,
	\begin{equation}
		\label{eq:fromYZToU}
		\left\{
		\begin{aligned}
			& \E^\P[y_t | X_t] = b(t, X_t, u(t, X_t))\\
			& \E^\P[z_t | X_t] \ge f(t, X_t, u(t, X_t)).
		\end{aligned}
		\right.
	\end{equation}
\end{lemma}
\begin{proof}
  \begin{enumerate}
  \item We set $\Phi_t:= (y_t,z_t)$ which belongs a.s. to $K(t,X_t)$.
    We prove below that, for almost all $t \in [0, T]$, 
	\begin{equation}
		\label{eq:condExpInKtx}
\E^\P([\Phi_t | X_t]) \in K(t, X_t) \quad \P\text{-a.s.}
	\end{equation}
	Indeed, let $t \in [0, T]$ such that $(y_t, z_t) \in K(t, X_t)$ $\P$-a.s.
        We set $\mu := \shl^\P(X_t)$. By Theorem 1.1.6 and Theorem 1.1.8 in \cite{stroock} there exists a measurable family $(\P_x)_{x \in \R^d}$ of probability measures on $(\Omega, \shf)$ such that $\P_x(X_t = x) = 1$ for $\mu$-almost all $x \in \R^d$ and $\P = \int_{\R^d}\P_x\mu(dx)$. On the one hand, since $\P(\Phi_t \in K(t, X_t)) = 1$,
	$$
	1 = \P(\Phi_t \in K(t, X_t)) = \int_{\R^d}\P_x(\Phi_t \in K(t, X_t))\mu(dx) =  \int_{\R^d}\P_x(\Phi_t \in K(t, x))\mu(dx),
	$$
	hence $\P_x(\Phi_t \in K(t, x)) = 1$ for $\mu$-almost all $x \in \R^d$.
        Consequently, since $K(t,x)$ is a convex closed set, by Theorem 1 in \cite{ConvCondExp},
\begin{equation}
		\label{eq:PxKtx}
		\P_x\left(\E^{\P_x}[\Phi_t | X_t] \in K(t, x)\right) = \P_x\left(\E^{\P_x}[\Phi_t] \in K(t, x)\right) = 1.
	\end{equation}
	On the other hand, by definition of the conditional expectation, $\E^\P[\Phi_t | X_t] = \left(\E^{\P_x}[\Phi]\right)\circ X_t$. Consequently,
	\begin{equation*}
		\begin{aligned}
			\P\left(\E^\P[\Phi_t | X_t] \in K(t, X_t)\right) & = \int_{\R^d}\P_x\left(\E^{\P}[\Phi_t | X_t] \in K(t, X_t)\right)\mu(dx)\\
			& = \int_{\R^d}\P_x\left(\E^{\P_x}[\Phi_t] \in K(t, x)\right)\mu(dx)\\
			& = 1,
		\end{aligned}
	\end{equation*}
	where we used \eqref{eq:PxKtx} to conclude. \eqref{eq:condExpInKtx} is proved.
	
	\item
	It remains to prove \eqref{eq:fromYZToU}. Proposition 5.1 in \cite{MimickingItoGeneral} provides two measurable functions $\shy, \shz$ such that for all $t \in [0, T]$, $\P$-a.s.
	\begin{equation}
		\label{eq:defYZ}
		\left(\E^\P\left[y_t\middle|X_t\right], \E^\P\left[z_t\middle|X_t\right]\right) = (\shy(t, X_t), \shz(t, X_t)).
	\end{equation}
	Let then
	$$
	N := \{(t, x) \in [0, T] \times \R^d~|~(\shy(t, x), \shz(t, x)) \notin K(t, x)\}.
	$$
	The set $N$ is a Borel set,
	%	as by continuity of $b(t, x, .)$ and $f(t, x, .)$ it holds that
	%	$$
	%	N = \bigcup_{u \in \U \cap Q^d} \bigcup_{1 \le i \le d} \{(t, x) : b_i(t, x, u) < y_i(t, x)\}\cup\{(t, x) : b_i(t, x, u) > y_i(t, x)\}\cup\{(t, x) : f(t, x, u) > z(t, x)\}
	%	$$
	%	and each sets appearing in the union are Borel sets by measurability of the functions $(t, x) \mapsto b(t, x, u)$ and $(t, x) \mapsto f(t, x, u)$ for all $u \in \U$.
	and we now modify the functions $\shy$ and $\shz$ on $N$ and obtain two Borel functions $\hat \shy, \hat \shz$ defined by
	\begin{equation}
		\left\{
		\begin{aligned}
			& (\hat \shy(t, x), \hat \shz(t, x)) = (\shy(t, x), \shz(t, x))~\text{if}~(t, x) \notin N\\
			& (\hat \shy(t, x), \hat \shz(t, x)) = (b(t, x, u_0), f(t, x, u_0))~\text{if}~(t, x) \in N,
		\end{aligned}
		\right.
	\end{equation}
	where $u_0 \in \U$ is fixed. In particular, $(\hat \shy(t, x), \hat \shz(t, x)) \in K(t, x)$ for all $(t, x) \in [0, T] \times \R^d$. Then by Theorem \ref{th:measurableSelection}there exists a Borel function $u \in \shb([0, T] \times \R^d, \U)$ such that 
	\begin{equation}
		\label{eq:measurableSelecU}
		\left\{
		\begin{aligned}
			&\hat \shy(t, x) = b(t, x, u(t, x))\\
			&\hat \shz(t, x) \ge f(t, x, u(t, x))
		\end{aligned}
		\right.~ \text{for all}~(t, x) \in [0, T] \times \R^d.
	\end{equation}
	Combining \eqref{eq:condExpInKtx}, \eqref{eq:defYZ} and \eqref{eq:measurableSelecU} yields \eqref{eq:fromYZToU} for almost all $t \in [0, T]$, $\P$-a.s.
        \end{enumerate}
\end{proof}

\section{Proof of Theorem \ref{th:existenceSolutionRegProb}} \label{app:proofThEx}
\setcounter{equation}{0}
\renewcommand\theequation{C.\arabic{equation}}
%----------------------------------------------------------

  To simplify the formalism of the proof we will assume that $\epsilon = 1$ and $g = 0$.
  In the whole section, we can choose $1 \le p < p'$ as power constants
  appearing in Hypotheses
  \ref{hyp:costFunctionsControl}. We start by some definitions.
\begin{definition}
	\label{def:wasserstein}
	(Wasserstein space).
	Let $(E, d)$ be a metric space. We denote $\shp^p(E)$ the set of probability measures $\P \in \shp(E)$ such that $\int_{E}(d(x, x_0))^p\P(dx) < + \infty$ for some (and thus for any) $x_0 \in E$. We endow $\shp^p(E)$ with the Wasserstein metric
	\begin{equation}
		\label{eq:wassersteinDistance}
		d_p(\P, \Q) := \inf\left\{\int_{E \times E} (d(x, y))^p\rho(dx, dy) ~:~\rho \in \shp(E \times E),~\rho(\cdot \times E) = \P, ~\rho(E \times \cdot) = \Q\right\}^{1/p}.
	\end{equation}
\end{definition}
\begin{definition}
	\label{def:relaxedControl}
	(Relaxed controls).
	We denote $\shv$ the set of relaxed controls, that is the set of non-negative measures $q$ on $[0, T] \times \U$ such that we have the following.
	\begin{enumerate}
		\item $q(\cdot \times \U)$ is the Lebesgue measure on $[0, T]$, and $q([0, T] \times \cdot)/T$ is a probability measure on $(\U, \shb(\U))$.
		\item $\int_{[0, T] \times \U}|u|^pq(dr, du) < + \infty$.
	\end{enumerate}
	The space $\shv$ is endowed with the distance $d_\shv(q_1, q_2) := d_p(q_1/T, q_2/T)$ where $d_p$ is given by \eqref{eq:wassersteinDistance}.
\end{definition}
\begin{definition}
	\label{def:relaxedSpace}
	(Extended space).
	Let $\bar \Omega := C([0, T], \R^d) \times \shv$ and we denote $(X, \Lambda)$ its canonical process.
        The space $\bar \Omega$ is endowed with the filtration $(\bar \shf_t)_{t \in [0, T]}$ defined for all
        $t \in [0, T]$ by $\bar \shf_t := \shf_t^X \otimes \shf_t^\Lambda$ where
        $\shf_t^X := \sigma(X_r, 0 \le r \le t)$ and $\shf_t^\Lambda := \sigma(\Lambda(A), A \in \shb([0, t] \times \U))$. $\bar \Omega$ is equipped with the distance $d_{\bar \Omega}$ given by $d_{\bar \Omega}((x_1, q_1), (x_2, q_2)) := |x_1 - x_2|_\infty + d_{\shv}(q_1, q_2)$.
\end{definition}
\begin{definition}
	\label{def:relaxedSet}
	(Relaxed admissible set).
	Let $\bar \sha$ be the subset of $(\shp(\bar \Omega))^2$ such that $(\bar \P, \bar \Q) \in \bar \sha$ if the following holds.
	\begin{enumerate}
		\item $H(\bar \Q | \bar \P) < + \infty$.
		\item Under $\bar \P$ the process $X$ decomposes as
		\begin{equation}
			\label{eq:decompRelaxed}
			X_t = x + \int_{[0, t] \times \U} b(r, X_r, u)\Lambda(dr, du) + M_t^{\bar \P},
		\end{equation}
		where $M^{\bar \P}$ is a $(\bar \shf_t)$-local martingale verifying $[ M^{\bar \P}] = \int_0^\cdot \sigma\sigma^\top(r, X_r)dr$.
	\end{enumerate}
      \end{definition}
     We will denote $\bar \shp_\U$ the set of elements of $\shp(\bar \Omega)$ such that decomposition \eqref{eq:decompRelaxed} holds.

For $(\bar \P, \bar \Q) \in \bar \sha$ we introduce a relaxed problem defined by
\begin{equation}
  \label{eq:relaxedProblem}
	\bar \shj^* := \inf_{(\bar \P, \bar \Q) \in \bar \sha} \bar \shj(\bar \Q, \bar \P) \quad \text{where} \quad \bar \shj(\bar \Q, \bar \P) := \E^{\bar \Q}\left[\int_{[0, T] \times \U}f(r, X_r, u)\Lambda(dr, du) + g(X_T)\right] + H(\bar \Q | \bar \P).
\end{equation}
\begin{remark}
	\begin{enumerate}
        \item The notion of relaxed control in Definition \ref{def:relaxedControl} extends the notion of (strict) control $\nu = \nu^\P$
          as introduced in Definition \ref{def:PU}. Indeed, a
          control $\nu : [0, T] \times \Omega \rightarrow \U$ induces a measure on $[0, T] \times \U$ by setting $q^\nu := dt\delta_{\nu}(du) \in \shv$.
          
          \item The set of relaxed controls has two main advantages : it is convex and there exist very convenient tightness criteria to identify its precompact sets using Prokhorov's theorem. This allows to easily prove the existence of a solution to the relaxed Problem \eqref{eq:relaxedProblem}. Under the convexity Hypothesis \ref{hyp:convexSet}, it is then possible to deduce the existence of a solution to the original Problem \eqref{eq:penalizedProblemIntro}.  
	%%%
        %%% Intuition. On cherche un controle dans un "espace plus gros".
         %%% Mais à la fin il y a un optimum parmi les contrôles classiques.
        %\item The relaxed formulation \eqref{eq:relaxedProblem} on the extended space $\bar \Omega$ allows to use very convenient tightness criteria to prove existence of a solution to Problem \eqref{eq:penalizedProblemIntro}. 
	\end{enumerate}
\end{remark}
The strategy of the proof of Theorem \ref{th:existenceSolutionRegProb} is the following. We first prove in Proposition \ref{prop:relaxedSolution} that Problem \eqref{eq:relaxedProblem} admits a solution $(\bar \P^*, \bar \Q^*)$ on $\bar \sha$. We then use Lemma \ref{lemma:backToOmega} to compute an optimal solution $(\P^*, \Q^*)$ to the penalized Problem \eqref{eq:penalizedProblemIntro} derived from $(\bar \P^*, \bar \Q^*)$. We start by a useful technical result, which is Lemma 3.2 in \cite{LackerMarkovianControl}.
\begin{lemma}
  \label{lemma:representationMeasure}
  There exists a $\shf_t^\Lambda$-predictable process
  %%% c'est une désinttégration $\shv$ inclut le temps
  %%% Il n'y a pas de produit cartésien?
  $\bar \Lambda : [0, T] \times \shv \rightarrow \shp(\U)$ such that
   for each $q \in \shv$,
  %%% 
  $\Lambda(q)(dt, du) =  dt \bar \Lambda_t(q)(du)$.
\end{lemma}
Based on Lemma \ref{lemma:representationMeasure}, we can now write the canonical process $(X, \Lambda)$ on $\bar \Omega$ as $(X, dt\bar \Lambda_t(du))$.
\begin{remark} \label{rmk:bfLambdaEst}
	We list below some facts that will be useful to prove Theorem \ref{th:existenceSolutionRegProb}.
	\begin{enumerate}
		\item We immediately deduce from Hypothesis \ref{hyp:coefDiffusion} item 2. and Hypothesis \ref{hyp:costFunctionsControl} item 1. that, for all $t \in [0, T]$,
		\begin{equation}
			\label{eq:linearGrowthRelaxed}
			\left|\int_\U b(t, X_t, u)\bar \Lambda_t(du)\right| \le \int_\U|b(t, X_t, u)|\bar \Lambda_t(du) \le C_{b, \sigma}(1 + |X_t|),
		\end{equation}
		and
		\begin{equation}
			\label{eq:polyGrowthRelaxed}
			\left|\int_\U f(t, X_t, u)\bar \Lambda_t(du)\right| \le \int_\U |f(t, X_t, u)|\bar \Lambda_t(du) \le C_{f, g}(1 + |X_t|^p).
		\end{equation}
		
              \item Let $\bar \P \in \bar \shp_\U$.
                %%% C'est semblable au cas du lemme 3.11? Chapitre 2.
                % Ici le cadre est un peu différent. La-bas aussi
           %%% on utilisait le même résultat de Krylov.
                Taking into account decomposition \eqref{eq:decompRelaxed},
 \eqref{eq:linearGrowthRelaxed} as well as
         linear growth of the diffusion coefficient $\sigma$ in Hypothesis \ref{hyp:coefDiffusion} item $2.$, 
 % By \eqref{eq:linearGrowthRelaxed} as well as
     % linear growth of the diffusion coefficient $\sigma$
     % in Hypothesis \ref{hyp:coefDiffusion} item 2.,
%%%  Pourquoi ne pas appliquer le Lemme 3.11 directement?
         we can apply Lemma \ref{lemma:classicalEstimates}.
         %again Corollary 5.12, Chapter 2 in \cite{krylov}.
This yields that
  for all $q \ge 1$, there exists a constant $C(q)$ which only depends on $C_{b, \sigma}$, $T$ and $q$ such that
		\begin{equation}
			\label{eq:momentBarP}
			\E^{\bar \P}\left[\left(\sup_{0 \le t \le T} |X_t|\right)^q\right] \le C(q) < + \infty.
		\end{equation}
		
		\item Hypothesis \ref{hyp:coefDiffusion} item 2. implies in particular that
		$$
		|b(t, x, u)| \le C(1 + |x|^p + |u|^p),
		$$
		for some constant $C > 0$. Since $b$ is continuous in $(t, x, u) \in [0, T] \times \R^d \times \U$ by Hypothesis \ref{hyp:coefDiffusion} item 1., by
		Corollary A.5 in \cite{LackerMarkovianControl} applied with $A = \U$, $E = \R^d$ and $\phi = b$, the map
		$$
		(X, \Lambda) \mapsto \int_{[0, t] \times \U} b(r, X_r, u)\Lambda(dr, du),
		$$
		is continuous for $d_{\bar \Omega}$. Similarly, Hypothesis \ref{hyp:costFunctionsControl} implies by Corollary A.5 in \cite{LackerMarkovianControl} that the map
		$$
		(X, \Lambda) \mapsto \int_{[0, T] \times \U}f(r, X_r, u)\Lambda(dr, du)
		$$
		is continuous for $d_{\bar \Omega}$.
	\end{enumerate}	
\end{remark}

We will need the following simple two technical observations.
%------------------------------------------------------------------

\begin{lemma}
	\label{lemma:weakConvUnbounded}
	Let $(\P_n)_{n \ge 1}$ be a sequence of Borel probability measures on
        a Polish space $E$ that weakly converges towards a probability measure $\P_{\infty}.$ Let $\phi : E \rightarrow \R$ be a continuous function. Assume that there exists $\alpha, C > 0$ such that
	\begin{equation}
		\label{eq:uniformInt}
		\sup_{n\ge 1} \int_{E} |\phi(e)|^{1 + \alpha}\P_n(de) \le C.
	\end{equation}
	Then
	$$
	\int_{E}\phi(e)\P_n(de) \underset{n \rightarrow + \infty}{\longrightarrow} \int_{E}\phi(e)\P_{\infty}(de).
	$$ 
\end{lemma}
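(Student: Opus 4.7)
The plan is to use a truncation argument combined with the Portmanteau theorem (applied to bounded continuous functions) and the uniform $L^{1+\alpha}$ bound to control the tails uniformly in $n$.

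First, for $M > 0$ I would introduce the truncation $\phi_M := (\phi \wedge M) \vee (-M)$, which is a bounded continuous function on $Y$. By the classical Portmanteau theorem, one has
$$
\int_{Y} \phi_M(y)\, \P_n(dy) \underset{n \to + \infty}{\longrightarrow} \int_{Y} \phi_M(y)\, \P_\infty(dy).
$$
Next, I would estimate the truncation error under each $\P_n$ using the elementary bound $|\phi - \phi_M| \le |\phi|\,\mathds{1}_{\{|\phi| > M\}} \le |\phi|^{1+\alpha} / M^{\alpha}$, which together with \eqref{eq:uniformInt} yields
$$
\left| \int_Y (\phi - \phi_M)(y)\, \P_n(dy) \right| \le \frac{1}{M^\alpha} \int_Y |\phi(y)|^{1+\alpha}\, \P_n(dy) \le \frac{C}{M^\alpha}, \qquad \forall n \ge 1.
$$

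To obtain the analogous bound for $\P_\infty$, I would apply a Fatou-type consequence of weak convergence (Portmanteau for lower semicontinuous functions bounded below): since $|\phi|^{1+\alpha}$ is continuous and non-negative,
$$
\int_Y |\phi(y)|^{1+\alpha}\, \P_\infty(dy) \le \liminf_{n \to + \infty} \int_Y |\phi(y)|^{1+\alpha}\, \P_n(dy) \le C,
$$
so that the same truncation estimate gives $\big|\int_Y (\phi - \phi_M)\, d\P_\infty\big| \le C/M^\alpha$. Combining this with the triangle inequality
$$
\left| \int_Y \phi\, d\P_n - \int_Y \phi\, d\P_\infty\right| \le \left|\int_Y(\phi - \phi_M) d\P_n\right| + \left|\int_Y \phi_M\, d\P_n - \int_Y \phi_M\, d\P_\infty\right| + \left|\int_Y(\phi_M - \phi)\, d\P_\infty\right|,
$$
taking $\limsup_{n \to + \infty}$ kills the middle term and leaves $\le 2C/M^\alpha$; sending $M \to + \infty$ concludes.

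No step here is genuinely hard; the only subtlety worth double-checking is the passage of the $L^{1+\alpha}$ bound to the limit $\P_\infty$. This is the one place where an appeal to the standard lower-semicontinuity version of Portmanteau (applied to $|\phi|^{1+\alpha}$, which is continuous and bounded below by $0$) is needed, and without it the $\P_\infty$ tail estimate would not be available.
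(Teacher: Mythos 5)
Your proof is correct, but it follows a different route from the paper's. The paper invokes Skorokhod's representation theorem to realize the $\P_n$ as laws of random variables $X_n$ converging almost surely to a variable $X$ with law $\P_\infty$ on a common probability space; then \eqref{eq:uniformInt} gives uniform integrability of $(\phi(X_n))_{n\ge 1}$, continuity of $\phi$ gives $\phi(X_n)\to\phi(X)$ a.s., and the Vitali convergence theorem concludes. You instead work directly with the measures: truncate $\phi$ at level $M$, use the bounded-continuous case of weak convergence for the truncation, and control the tails uniformly via the estimate $|\phi-\phi_M|\le |\phi|^{1+\alpha}/M^{\alpha}$ together with \eqref{eq:uniformInt}; the only nontrivial point, which you correctly identify and handle, is transferring the $L^{1+\alpha}$ bound to $\P_\infty$ via the lower-semicontinuity form of the Portmanteau theorem applied to the nonnegative continuous function $|\phi|^{1+\alpha}$. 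Both arguments are standard and rely on $Y$ being Polish (Skorokhod's theorem needs separability; Portmanteau needs a metric space). Your version is somewhat more elementary in that it avoids constructing an auxiliary probability space, at the cost of the extra truncation bookkeeping; the paper's version is shorter once Skorokhod's theorem is taken for granted. Either proof is acceptable.
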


\begin{proof}
	By Skorokhod's representation theorem, there exists a probability space $( \Omega, \F, \mathbb{Q})$, a sequence of random variable $(X_n)_{n \ge 1}$ on $ \Omega$ and a random variable $X$ such that $\mathcal{L}^{\mathbb{Q}}(X_n)
	= \P_n$ and $X_n \rightarrow X$ $\mathbb{Q}$-a.s. Condition \eqref{eq:uniformInt} implies that the sequence $(\phi(X_n))_{n \ge 1}$ is uniformly integrable. Furthermore, by continuity of $\phi$, $\phi(X_n) \underset{n \rightarrow + \infty}{\longrightarrow} \phi(X)$ $\mathbb{Q}$-a.s. Thus
	$$\E^{\mathbb{Q}}[\phi(X_n)] \underset{n \rightarrow + \infty}{\longrightarrow} \E^{\mathbb{Q}}[\phi(X)]$$
	or equivalently
	$$
	\int_{E}\phi(e)\P_n(de) \underset{n \rightarrow + \infty}{\longrightarrow} \int_{E}\phi(e)\P_{\infty}(de).
	$$ 
\end{proof}

\begin{lemma}
	\label{lemma:relaxedBoundDensity}
	Let $\bar \P \in \bar\shp_\U$. Let $\bar \Q \in \shp(\bar \Omega)$ be defined by
	$$
	d\bar \Q := \frac{\exp\left(-\int_0^T \int_\U f(r, X_r, u)\bar \Lambda_r(du)dr\right)}{\E^{\bar \P}\left[\exp\left(-\int_0^T \int_\U f(r, X_r, u)\bar \Lambda_r(du)dr\right)\right]}d\bar \P.
	$$
        %%%  Il faudrait écrire avec $g$, quitte à appliquer le lemme
        %%% avec $g= 0$ la première fois.
	There exists a constant $C > 0$ only depending
        on  $C_{b, \sigma}, C_{f,g}, T$ and $p$
        %% independent of $\bar \P$ and $\bar \Q$
        such that $\|d\bar \Q/d\bar \P\|_\infty \le C < + \infty$.
\end{lemma}
\begin{proof}
	On the one hand, since $f \ge 0$,
	\begin{equation}
		\label{eq:boundExp}
		\exp\left(-\int_0^T \int_\U f(r, X_r, u)\bar \Lambda_r(du)dr\right) \le 1.
	\end{equation}
	On the other hand, from \eqref{eq:polyGrowthRelaxed} and \eqref{eq:momentBarP} in Remark \ref{rmk:bfLambdaEst}, there exists a constant $C(p)$ which only depends on $C_{b, \sigma}$, $T$ and $p$ such that
	\begin{equation}
		\E^{\bar \P}\left[\int_0^T \int_\U f(r, X_r, u)\bar
		\Lambda_r(du)dr\right] \le C_{f, g}T(1 + C(p)).
	\end{equation}
	Then by Jensen's inequality we have
	\begin{equation}
		\label{eq:jensenEntropy}
		\begin{aligned}
			\E^{\bar \P}\left[\exp\left(-\int_0^T\int_\U f(r, X_r, u)\bar \Lambda_r(du)dr \right)\right]
			& \ge \exp\left( \E^{\bar \P} \left[-\int_0^T\int_\U f(r, X_r, u)\bar \Lambda_r(du)dr\right]\right)\\
			& \ge \exp(- C_{f, g}T(1 + C(p))).
		\end{aligned}
	\end{equation}
	Combining \eqref{eq:boundExp} and \eqref{eq:jensenEntropy} we get $\left\|d\bar \Q_n/d\bar \P_n\right\|_\infty \le C$ by setting $C := \exp\left(C_{f, g}T(1 + C(p))\right)$.
\end{proof}
We can now start the proof of Theorem \ref{th:existenceSolutionRegProb}.
\begin{lemma}
	\label{lemma:tightness}
	There exists a minimizing sequence $(\bar \P_n, \bar \Q_n)_{n \ge 1}$ of $\bar \shj$ verifying the following.
	\begin{enumerate}
		\item $\underset{n \ge 1}{\sup}\left\|\frac{d\bar \Q_n}{d\bar \P_n}\right\|_\infty < + \infty$ and $\underset{n \ge 1}{\sup}~H(\bar \Q_n | \bar \P_n) < + \infty$.
		%\item $(\bar \P_n, \bar \Q_n) \in (\shp^p(\bar \Omega))^2$ for all $n \ge 1$.
		\item $(\bar \P_n, \bar \Q_n)_{n \ge 1}$ is relatively compact in $(\shp^p(\bar \Omega))^2$.
	\end{enumerate}
\end{lemma}
\begin{proof}
	In this proof, $C$ denotes a generic non-negative constant. Let $(\bar \P_n, \tilde \Q_n)_{n \ge 1}$ be a minimizing sequence of $\bar \shj$. Setting
	\begin{equation}
		\label{eq:densityRelaxed}
		d\bar \Q_n := \frac{\exp\left(-\int_0^T \int_\U f(r, X_r, u)\bar \Lambda_r(du)dr\right)}{\E^{\bar \P_n}\left[\exp\left(-\int_0^T \int_\U f(r, X_r, u)\bar \Lambda_r(du)dr\right)\right]}d\bar \P_n,
	\end{equation}
	by Proposition \ref{prop:markovExistenceMinimizer},
    $  \inf_{\Q} \bar \shj(\Q, \bar \P_n) = {\bar \shj}(\bar \Q_n, \bar \P_n)$
    so that $\bar \shj(\bar \Q_n, \bar \P_n) \le \bar \shj(\tilde \Q_n, \bar \P_n)$. Hence $(\bar \P_n, \bar \Q_n)_{n \ge 1}$ is still a minimizing sequence of $\bar \shj$. Since $\bar \Q_n$ is defined by \eqref{eq:densityRelaxed},
    % item 1. is a direct application of Lemma \ref{lemma:relaxedBoundDensity},
    $\underset{n \ge 1}{\sup}\left\|\frac{d\bar \Q_n}{d\bar \P_n}\right\|_\infty < + \infty$
    and $\underset{n \ge 1}{\sup}~H(\bar \Q_n | \bar \P_n) < + \infty$ by Lemma \ref{lemma:relaxedBoundDensity}.
    This establishes item 1.
    
    Let us now prove that the sequence $(\bar \P_n, \bar \Q_n)_{n \ge 1}$ is relatively compact in $(\shp^p(\bar \Omega))^2$, i.e. item 2. Notice first
    that since $(\bar \Q_n, \bar \P_n)_{n \ge 1}$ is a minimizing sequence of $\bar \shj$, we have
	\begin{equation}
		\label{eq:supBounded}
		\sup_{n \ge 1} \bar \shj(\bar \Q_n, \bar \P_n) < + \infty.
	\end{equation}
	Let then $q' > 1$. Since \eqref{eq:linearGrowthRelaxed} and \eqref{eq:linearGrowthbSigma} hold, by Problem 3.15, Chapter 5 in \cite{karatshreve}
        applied with $b(t, y) = \int_\U b(t, y, u)\bar \Lambda_r(du)$, there exists a constant $C > 0$ which only depends on $C_{b, \sigma}$, $T$, $q'$ and $d$ such that
	$$
	\E^{\bar \P_n}\left[|X_t - X_s|^{2q'}\right] \le C|t - s|^{q'},
	$$
	hence $\left(\shl^{\bar \P_n}(X)\right)_{n \ge 1}$ is a tight sequence by Kolmogorov criteria, see e.g. Problem 4.11, Chapter 2 in \cite{karatshreve}.
	Moreover, by Hypothesis \ref{hyp:costFunctionsControl} item 3. and \eqref{eq:supBounded},
	\begin{equation}
		\label{eq:momentRelaxedControl}
		\begin{aligned}
                  \sup_{n \ge 1} \E^{\bar \P_n}\left[\int_0^T \int_\U |u|^{p'}\bar \Lambda_r(du)dr\right] & \le C'\left(1 + \sup_{n \ge 1} \E^{\bar \P_n}\left[\int_0^T \int_\U f(r, X_r, u) \bar \Lambda_r(du)dr\right]\right)\\
			& \le C'\left(1 + \sup_{n \ge 1} \bar \shj(\bar \Q_n, \bar \P_n)\right) < + \infty.
		\end{aligned}
	\end{equation}
	Using again \eqref{eq:momentBarP} and by \eqref{eq:momentRelaxedControl} we have 
	\begin{equation}
		\label{eq:normP}
		\sup_{n \ge 1} \E^{\bar \P_n}\left[\left(\sup_{0 \le r \le T}|X_r|\right)^{p'} + \int_0^T \int_\U |u|^{p'}\bar \Lambda_r(du)dr\right] < + \infty,
	\end{equation}
	where we recall that $p' > p \ge 1$ as fixed at the beginning
        of Appendix \ref{app:proofThEx}.
        Since $\left(\shl^{\bar \P_n}(X)\right)_{n \ge 1}$ is tight in $\shp(\Omega)$ and \eqref{eq:normP} holds,
	%It follows from \eqref{eq:normP} that $\bar \P_n \in \shp^p(\bar \Omega)$ for all $n \ge n$. Moreover, 
	by Proposition B.3 in \cite{LackerMarkovianControl},
        %% Que dit exactement Lacker? Intuitivement
       %%%les moments suffisent?
        the sequence $(\bar \P_n)_{n \ge 1}$ is relatively compact in $\shp^p(\bar \Omega)$. Now since $\underset{n \ge 1}{\sup}\left\|\frac{d\bar \Q_n}{d\bar \P_n}\right\|_\infty < + \infty$ by item 1.,  $\left(\shl^{\bar \Q_n}(X)\right)_{n \ge 1}$ is also tight and \eqref{eq:normP} is also verified replacing $\bar \P_n$ by $\bar \Q_n$. Hence $(\bar \Q_n)_{n \ge 1}$ is also relatively compact in $\shp^p(\bar \Omega)$. This concludes the proof.
      \end{proof}
      %%%  A quel moment on utilise p-Wasserstein?

\begin{lemma}
	\label{lemma:limitInBarA}
	Let $(\bar \P_n, \bar \Q_n)_{n \ge 1}$ be a minimizing sequence of $\bar \shj$
        fulfilling items 1. and 2. of Lemma \ref{lemma:tightness} statement.
        Any limit point $(\bar \P, \bar \Q)$ of $(\bar \P_n, \bar \Q_n)_{n \ge 1}$ belongs to $\bar \sha$. 
\end{lemma}
\begin{proof}
  Up to a subsequence, we can assume that the whole sequence $(\bar \P_n, \bar \Q_n)_{n \ge 1}$ converges in $(\shp^p(\bar \Omega))^2$ towards $(\bar \P, \bar \Q)$. Let us prove that $(\bar \P, \bar \Q)$ verifies all items of Definition \ref{def:relaxedSet}. We first check item 1.
  We recall that $E:= \bar \Omega$ is a Polish space.
  By  Remark \ref{rmk:relativeEntropy},
 $(\Q, \P) \mapsto H(\Q | \P)$ is lower semicontinuous with 
 respect to the weak-star convergence on $E^*$.
 Since, the convergence in $\shp^p(\bar \Omega)$ implies the weak convergence, we have
	$$
	H(\bar \Q | \bar \P) \le \liminf_{n \rightarrow + \infty} H(\bar \Q_n | \bar \P_n) < + \infty.
	$$
	where we used item 1. of Lemma \ref{lemma:tightness} to prove the finiteness in  previous inequality.

        We now verify item 2. of Definition \ref{def:relaxedSet}.
        Let $h$ belonging to the space $ C_c^{\infty}(\R^d)$ of real-valued smooth functions with compact support on $\R^d$. We set
	$$
	Y_\cdot := \int_{[0, \cdot] \times \U} b(r, X_r, u)\Lambda(dr, du).
	$$
	By \eqref{eq:decompRelaxed}, under $\bar \P_n$ we have $X = x + Y + M^{\bar \P_n},$
        where  $M^{\bar \P_n}$ is a $(\bar \shf_t)$-local martingale
        verifying $[ M^{\bar \P_n}] = \int_0^\cdot \sigma\sigma^\top(r, X_r)dr$.
         Then by It\^o's formula applied to \eqref{eq:decompRelaxed} under $\bar \P_n$, the process
	$$
	N [h] := h(X_{\cdot} - Y_{\cdot}) - h(x) - \frac{1}{2}\int_0^{\cdot} Tr[\sigma\sigma^\top(r, X_r)\nabla_x^2h(X_r - Y_r)]dr
	$$
	is a local martingale under $\bar \P_n$. Moreover, since $h$ and $\nabla_x^2 h$ are bounded, \eqref{eq:linearGrowthbSigma} and \eqref{eq:momentBarP} implies that
	$$
	\E^{\bar \P_n}\left[\sup_{0 \le t \le T}|N[h]_t|\right] \le 2\|h\|_\infty + T\|\nabla_x^2 h\|_\infty C_{b, \sigma}^2\left(1 + \E^{\bar \P_n}\left[\sup_{0 \le t \le T} |X_t|^2\right]\right) < + \infty,
	$$
	hence $N[h]$ is a genuine $(\bar \P_n, \bar \shf_t)$-martingale.
%	For $k \in \N^*$ set
%	$$
%	\tau_k := \inf\left\{t \in [0, T]~:~\int_0^tTr[\sigma\sigma^\top(r, X_r)\nabla_x^2h(X_r - Y_r)]dr > k \right\},
%	$$
%        %%% A priori N(h) est une martingale
%        %%%car par les moments $\sup_t \vert N[h]\vert_p$ est dans $L^p$.
%        %%% En tout cas la dérivée seconde est bornée.
%        %%% Pas besoin de localiser
%        %% On peut utiliser (C.8) et obtenir
%       %%% tout de suite les estimées que l'on désire.
%        with the convention $\inf \emptyset = + \infty$. The stopped process $N[h]_{\cdot \wedge \tau_k}$ is bounded, hence $N[h]_{\cdot \wedge \tau_k}$ is a genuine martingale under $\bar \P_n$ for all $n \in \N$. Up to localizing $N[h]$ with the sequence $(\tau_k)_{k \ge 1}$, we can thus assume that $N[h]$ is a martingale under $\bar \P_n$ for all $n \in \N$.
	We then want to prove that $N[h]$ is also a martingale under $\bar \P$. 
	Let $ 0 \le s < t \le T$. Let $\psi : C([0, s], \R^d) \times \shv_s \rightarrow \R$ be a bounded continuous function, where $\shv_s$ is
the set of the elements of $\shv$ according to 
       % the restriction of $\shv$ to $[0, s] \times \U$.
        %%  Peut-on pas être plus précis? Par exemple, référer
       Definition \ref{def:relaxedControl}
       where we have replaced $T$ with $s$.
        Then
	\begin{equation}
		\label{eq:martingaleEquality}
		\E^{\bar\P_n}\left[\psi\left(\1_{[0, s]}X, \1_{[0, s]}\Lambda\right)
		N[h]_t\right] =
		\E^{\bar\P_n}\left[\psi\left(\1_{[0, s]}X, \1_{[0, s]}\Lambda\right)
		N[h]_s\right].
	\end{equation}
	On the one hand by Remark \ref{rmk:bfLambdaEst} item 3., the map
    $$
    (X, \Lambda) \mapsto \int_{[0, t] \times \U} b(r, X_r, u)\Lambda(dr, du)
    $$
    is continuous for $d_{\bar \Omega}$, that is $Y = Y(X, \Lambda)$ is continuous for $d_{\bar \Omega}$. Since $\psi$ and $h$ are bounded continuous, the function $(X, \Lambda) \mapsto \psi\left(\1_{[0, s]}X, \1_{[0, s]}\Lambda\right)(h(X_s - Y_s) - h(x))$ is bounded continuous for $d_{\bar \Omega}$ and since $\bar \P_n \rightarrow \bar \P$ weakly,
	\begin{equation}
		\label{eq:id1}
		\E^{\bar \P_n}\left[\psi\left(\1_{[0, s]}X, \1_{[0, s]}\Lambda\right)(h(X_s - Y_s) - h(x))\right] \underset{n \rightarrow + \infty}{\longrightarrow} \E^{\bar \P}\left[\psi\left(\1_{[0, s]}X, \1_{[0, s]}\Lambda\right)(h(X_s - Y_s) - h(x))\right].
	\end{equation}
	On the other hand, since $\nabla_x^2h$ is bounded, \eqref{eq:linearGrowthbSigma} yields for all $r \in [0, T]$
	$$
	\left|Tr[\sigma\sigma^\top(r, X_r)\nabla_x^2h(X_r - Y_r)]\right|
	\le 2C_{b, \sigma}^2\|\nabla_x^2 h\|_\infty(1 + |X_r|^2).
	$$
	Combining the previous inequality with \eqref{eq:momentBarP} we get that for some $\alpha > 0$,
	\begin{equation}
		\label{eq:estimateWeakConv}
		\sup_{n \ge 1}\sup_{r \in [0, T]} \E^{\bar \P_n}\left[\left|Tr[\sigma\sigma^\top(r, X_r)\nabla_x^2h(X_r - Y_r)]\right|^{1 + \alpha}\right] < + \infty.
	\end{equation}
	Hence it holds
	$$
	\sup_{n \ge 1} \E^{\bar \P_n}\left[\left|\psi\left(\1_{[0, s]}X, \1_{[0, s]}\Lambda\right)\int_0^s Tr[\sigma\sigma^\top(r, X_r)\nabla_x^2h(X_r - Y_r)]dr\right|^{1 + \alpha}\right] < + \infty,
	$$
	and by Lemma \ref{lemma:weakConvUnbounded} with $E = C([0, s], \R^d) \times \shv_s$, we get
	\begin{equation}
		\label{eq:id2}
		\begin{aligned}
			&\E^{\bar \P_n}\left[\psi\left(\1_{[0, s]}X, \1_{[0, s]}\Lambda\right)\int_0^s Tr[\sigma\sigma^\top(r, X_r)\nabla_x^2h(X_r - Y_r)]dr\right]\\
			& \underset{n \rightarrow + \infty}{\longrightarrow} \E^{\bar \P}\left[\psi\left(\1_{[0, s]}X, \1_{[0, s]}\Lambda\right)\int_0^s Tr[\sigma\sigma^\top(r, X_r)\nabla_x^2h(X_r - Y_r)]dr\right].
		\end{aligned}
	\end{equation}
	Combining \eqref{eq:id1} and \eqref{eq:id2} and letting $n \rightarrow + \infty$ in \eqref{eq:martingaleEquality} yields
	$$
	\E^{\bar\P}\left[\psi\left(\1_{[0, s]}X, \1_{[0, s]}\Lambda\right)N[h]_t\right] = \E^{\bar\P}\left[\psi\left(\1_{[0, s]}X, \1_{[0, s]}\Lambda\right)N[h]_s\right].
	$$
	Hence the process $N[h]$ is an $((\bar \F_t), \bar \P)$-martingale for all $h \in C_c^{\infty}(\R^d)$.
	By standard stochastic calculus arguments, this implies that under $\bar \P$ the process  writes $X_t = x + Y_t + M_t^{\bar \P}$, where $M^{\bar \P}$ is a $(\bar \F_t)$-local martingale verifying $[ M^{\bar \P}] = \int_0^\cdot \sigma\sigma^\top(r, X_r)dr$. Item 2. of Definition \ref{def:relaxedSet} is verified and we conclude that $(\bar \P, \bar \Q) \in \bar \sha$.
\end{proof}
\begin{prop}
	\label{prop:relaxedSolution}
	The Problem \eqref{eq:relaxedProblem} admits a solution $(\bar \P^*, \bar \Q^*) \in \bar \sha$, in the sense that $\bar \shj^* = \bar \shj(\bar \Q^*, \bar \P^*)$ which verifies $\|d\bar \Q^*/d\bar \P^*\|_\infty < + \infty$.
\end{prop}
\begin{proof}
	Let $(\bar \P_n, \bar \Q_n)_{n \ge 1}$ be the minimizing sequence given by Lemma \ref{lemma:tightness} and let $(\bar \P, \bar \Q)$ be any limit point of the sequence $(\bar \P_n, \bar \Q_n)_{n \ge 1}$. Up to a subsequence we can assume that the whole sequence $(\bar \P_n, \bar \Q_n)_{n \ge 1}$ converges towards $(\bar \P, \bar \Q)$ in $(\shp^p(\bar \Omega))^2$. Recall that by Remark \ref{rmk:bfLambdaEst} the map
	$$
	(X, \Lambda) \mapsto \int_{[0, T] \times \U}f(r, X_r, u)\Lambda(dr, du)
	$$
	is continuous for $d_{\bar \Omega}$. Now by \eqref{eq:polyGrowthRelaxed}, we have
	%there exists a constant $C(p) > 0$ such that
	$$
	\left|\int_{[0, T] \times \U}f(r, X_r, u)\Lambda(dr, du)\right| \le C_{f, g}\left(1 + \sup_{0 \le r \le T}|X_r|^p\right),
	$$
	and by \eqref{eq:momentBarP}, we deduce that
	$$
	\sup_{n \ge 1}\E^{\bar \P_n}\left[\left|\int_{[0, T] \times \U}f(r, X_r, u)\Lambda(dr, du)\right|^{1 + \alpha}\right] < + \infty
	$$
	for any $\alpha > 0$. Since $\underset{n \ge 1}{\sup}\left\|\frac{d\bar \Q_n}{d\bar \P_n}\right\|_\infty$ by item 1. of Lemma \ref{lemma:tightness}, it also holds that
	$$
	\sup_{n \ge 1}\E^{\bar \Q_n}\left[\left|\int_{[0, T] \times \U}f(r, X_r, u)\Lambda(dr, du)\right|^{1 + \alpha}\right] < + \infty.
	$$
	Then by Lemma \ref{lemma:weakConvUnbounded} applied with $E = \bar \Omega$, we have
	\begin{equation}
		\label{eq:convCost1}
		\E^{\bar \Q_n}\left[\int_{[0, T] \times \U}f(r, X_r, u)\Lambda(dr, du)\right] \underset{n \rightarrow + \infty}{\longrightarrow} \E^{\bar \Q}\left[\int_{[0, T] \times \U}f(r, X_r, u)\Lambda(dr, du)\right].
              \end{equation}
Again by  Remark \ref{rmk:relativeEntropy},
 $(\Q, \P) \mapsto H(\Q | \P)$ is lower semicontinous with 
 respect to the weak-star convergence on ${\bar \Omega}$, and
we have
	\begin{equation}
		\label{eq:convCost2}
		H(\bar \Q | \bar \P) \le \liminf_{n \rightarrow + \infty} H(\bar \Q_n | \bar \P_n).
	\end{equation}
	Combining \eqref{eq:convCost1} and \eqref{eq:convCost2}, we get
	$$
	\bar \shj^* = \lim_{n \rightarrow + \infty} \bar \shj(\bar \Q_n, \bar \P_n) = \liminf_{n \rightarrow + \infty} \bar \shj(\bar \Q_n, \bar \P_n) \ge \bar \shj(\bar \Q, \bar \P).
	$$
	By Lemma \ref{lemma:limitInBarA}, $(\bar \P, \bar \Q) \in \bar \sha$ and we conclude that $(\bar \P, \bar \Q)$
        achieves the minimum of $\bar \shj$. Moreover, we set $\bar \P^* := \bar \P$ and
	$$
	d\bar \Q^* := \frac{\exp\left(-\int_0^T \int_\U f(r, X_r, u)\bar \Lambda_r(du)dr - g(X_T)\right)}{\E^{\bar \P}\left[\exp\left(-\int_0^T \int_\U f(r, X_r, u)\bar \Lambda_r(du)dr - g(X_T)\right)\right]}d\bar \P^*.
	$$
        By Proposition \ref{prop:markovExistenceMinimizer} we have that
        $\shj(\bar \Q, \bar \P) \ge \shj(\bar \Q^*, \bar \P^*)$.
         Since $(\bar \P^*, \bar \Q^*) \in \bar \sha$, $\bar \shj(\bar \Q^*, \bar \P^*) = \bar \shj(\bar \Q, \bar \P)$ and $(\bar \P^*, \bar \Q^*)$ also achieves the minimum of $\bar \shj$.
         Finally $\|d\bar \Q^*/d\bar \P^*\|_\infty < + \infty$ by Lemma \ref{lemma:relaxedBoundDensity}.
      \end{proof}
\begin{lemma}
	\label{lemma:backToOmega}
	Let $(\bar \P, \bar \Q) \in \bar \sha$ such that $\|d\bar \Q/d\bar \P\|_\infty < + \infty$. There exists $(\P, \Q) \in \sha$ with $\P \in \shp_\U^{Markov}$ such that $\bar \shj(\bar \Q, \bar \P) \ge \shj(\Q, \P)$.
\end{lemma}
\begin{proof}
  Since $H(\bar \Q | \bar \P) < + \infty$, by Theorem \ref{th:girsanovEntropy} applied on the space $\bar \Omega$ equipped with the probability measures $\bar \P$ and $\bar \Q$ with $\delta_r = \int_\U b(r, X_r, u)\bar \Lambda_r(du)$, $a_r = \sigma\sigma^\top(r, X_r)$, there exists a $(\bar \shf_t)$-progressively measurable process $\bar \alpha$ such that under $\bar \Q$,
%taking into account \eqref{eq:DecompQ},
 the canonical process decomposes as
	\begin{equation} \label{eq:DecompQ}
	X_t = x + \int_0^t \int_\U b(r, X_r, u)\bar \Lambda_r(du)dr + \int_0^t \sigma\sigma^\top(r, X_r)\bar \alpha_rdr + M_t^{\bar \Q},
	\end{equation}
	where the local martingale $M^{\bar \Q}$ verifies $[ M^{\bar \Q} ] = \int_0^\cdot \sigma\sigma^\top(r, X_r)dr$ and
	\begin{equation}
		\label{eq:entropyBar}
		H(\bar \Q | \bar \P) \ge \frac{1}{2}\E^{\bar \Q}\left[\int_0^T |\sigma^\top(r, X_r)\bar \alpha_r|^2dr\right].
	\end{equation}
	The proof consists in two parts. In the first part we establish some useful estimates related to $\bar \Q$ and to the previous decomposition. In the second part we introduce a probability measure $\Q \in \shp(\Omega)$ mimicking the time marginals of $\bar \Q$ for all $t \in [0, T]$ and another probability measure $\P \in \shp_\U^{Markov}$ such that $\bar \shj(\bar \Q, \bar \P) \ge \shj(\Q, \P)$.
	\begin{enumerate}
        \item Note first that since $\|d\bar \Q/d\bar \P\|_\infty < + \infty$, for all $q \ge 1$,  by \eqref{eq:momentBarP} in Remark \ref{rmk:bfLambdaEst} we have
		\begin{equation}
			\label{eq:momentBarQ}
			\E^{\bar \Q}\left[\left(\sup_{0 \le r \le T}|X_r|\right)^q\right] \le \left\|\frac{d\bar \Q}{d\bar \P}\right\|_\infty\E^{\bar \P}\left[\left(\sup_{0 \le r \le T}|X_r|\right)^q\right] < + \infty.
		\end{equation}
		It immediately follows from \eqref{eq:linearGrowthRelaxed} and \eqref{eq:momentBarQ} that
		\begin{equation}
			\label{eq:momentDrift}
			\E^{\bar \Q}\left[\int_0^T \left|\int_\U b(r, X_r, u)\bar \Lambda_r(du)\right|dr\right] < + \infty,
		\end{equation}
		and from \eqref{eq:polyGrowthRelaxed} and \eqref{eq:momentBarQ} that
		\begin{equation}
			\label{eq:momentCost}
			\E^{\bar \Q}\left[\int_0^T \left|\int_\U f(r, X_r, u)\bar \Lambda_r(du)\right|\right] < + \infty.
		\end{equation}
		Finally being $\sigma$ of linear growth because of Hypothesis \ref{hyp:coefDiffusion} item $2.$ and \eqref{eq:momentBarP} in Remark \ref{rmk:bfLambdaEst}, it holds that
		\begin{equation}
			\label{eq:momentSigma}
			\E^{\bar \P}\left[\int_0^T \|\sigma(r, X_r)\|^qdr\right] < + \infty
		\end{equation}
		for all $q \ge 1$. Then we can apply Lemma \ref{lemma:estimateBetaEntrop} item $1.$ which implies that for
                any $1 < q < 2$
		\begin{equation}
			\label{eq:momentEntropyDrift}
			\E^{\bar \Q}\left[\int_0^T |\sigma\sigma^\top(r, X_r)\bar \alpha_r|^qdr\right] < + \infty.
		\end{equation}

      \item We set $\beta_t := \int_\U b(t, X_t, u) \bar \Lambda_t(du) + \sigma\sigma^\top(t, X_t)\bar \alpha_t$ so that,
        taking into account \eqref{eq:DecompQ},
        $X$ decomposes as $X_t = x + \int_0^t \beta_rdr + M_t^{\bar \Q},$
        where $M_t^{\bar \Q}$ is a local martingale such that 
$[ M^{\bar \Q} ] = \int_0^\cdot \sigma\sigma^\top(r, X_r)dr$
   under $\bar \Q$ and \eqref{eq:entropyBar} rewrites

        \begin{equation}
			\label{eq:entropyBeta}
			H(\bar \Q | \bar \P) \ge \frac{1}{2}\E^{\bar \Q}\left[\int_0^T
                          %%% $\sigma^\top \sigma$ est invertible, pas $\sigma$
                          %%% Il faudrait prendre l'inverse généralisée
                          %%%%%%%%%%%%% $(\sigma \sigma^\top)^{-1} \sigma$
                          \left|\sigma^{-1}(r, X_r)\left(\beta_r - \int_\U b(r, X_r, u)\bar \Lambda_r(du)\right)\right|^2dr\right],
                      \end{equation}
                      where $\sigma^{-1}$ denotes again the generalized
                      right-inverse of $\sigma$.
                      %i.e.
                      % $\sigma^\top (\sigma \sigma^\top)^{-1}$
                      
        It follows from \eqref{eq:momentDrift}, \eqref{eq:momentEntropyDrift}, and \eqref{eq:momentSigma}
        together with the assumption
        $\|d\bar \Q/d\bar \P\|_\infty < + \infty$,
        that $\E^{\bar \Q}\left[\int_0^T (|\beta_r| + \|\sigma(r, X_r)\|)dr \right] < + \infty$. Then by Corollary 3.7 in \cite{MimickingItoGeneral} there exists a measurable function $\Gamma : [0, T] \times \R^d \mapsto \R^d$ and a probability measure $\Q$ on $(\Omega, \shf)$ such that the following holds.
		\begin{itemize}
                \item For all $0 \le t \le T$,
                \begin{equation}
                	\label{eq:Gamma}
                	\Gamma(t, X_t) = \E^{\bar \Q}[\beta_t~|~X_t]
                	\quad dt \otimes d\bar \Q\text{-a.s.}
                \end{equation}
			\item Under $\Q$ the canonical process can be expressed as
			$
			X_t = x + \int_0^t \Gamma(r, X_r)dr + M^\Q_t,
			$
			where $M^\Q$ is a $(\F_t)$-local martingale with
                        $[ M^\Q] = \int_0^{\cdot}\sigma\sigma^\top(r, X_r)dr$.
			\item $\shl^\Q(X_t) = \shl^{\bar \Q}(X_t),$ for all $t \in [0, T]$.
		\end{itemize}
%		Now, as $K(t, x)$ is convex for all $(t, x) \in [0, T] \times \R^d$ and $\bar \Lambda_r(du)$ is a probability measure,
%		$$
%		(y_t, z_t) := \left(\int_\U b(t, X_t, u)\bar \Lambda_t(du), \int_\U f(t, X_t, u)\bar \Lambda_t(du)\right) \in K(t, X_t) \quad dt\otimes d\bar \Q\text{-a.e.}
%		$$
%                %%%% Pour la deuxième composante OK, mais il ne faudrait pas
%                %%% montrer que la première composante est du type b(t,X_t,
                Since the estimates \eqref{eq:momentDrift} and \eqref{eq:momentCost} hold, Lemma \ref{lemma:condExpConvex} applied with $\Omega = \bar \Omega, \P = \bar \Q$ and
                $$
                (y_t, z_t) = \left(\int_\U b(t, X_t, u)\bar \Lambda_t(du), \int_\U f(t, X_t, u)\bar \Lambda_t(du)\right)
                $$
                gives
                the existence of a measurable function $\bar u \in \shb([0, T] \times \R^d, \U)$ such that for almost all $t \in [0, T]$, $\bar \Q$-a.s.,
		\begin{equation}
			\label{eq:controlBar}
			\left\{
			\begin{aligned}
				& \E^{\bar \Q}\left[\int_\U b(t, X_t, u)\bar \Lambda_t(du) \middle | X_t\right] = b(t, X_t, \bar u(t, X_t))\\
				& \E^{\bar \Q}\left[\int_\U f(t, X_t, u)\bar \Lambda_t(du) \middle | X_t \right] \ge f(t, X_t, \bar u(t, X_t)).
			\end{aligned}
			\right.
		\end{equation}
		\eqref{eq:controlBar} together with Fubini's theorem
                %%% A priori je ne connais pas de tel théorème
                %%% même si ça marche toujours.
                %%%Pour moi c'est Fubini normal.
                % for the conditional expectation
                then gives
		\begin{equation}
			\label{eq:inter1}
			\begin{aligned}
			\E^{\bar \Q}\left[\int_0^T \int_\U f(r, X_r, u)\bar \Lambda_r(du) + g(X_T)\right] & = \E^{\bar \Q}\left[\int_0^T f(r, X_r, \bar u(r, X_r))dr + g(X_T)\right]\\
			& = \E^{\Q}\left[\int_0^T f(r, X_r, \bar u(r, X_r))dr + g(X_T)\right].
			\end{aligned}
                      \end{equation}
                      \eqref{eq:controlBar} together with Fubini's theorem and Jensen's inequality for the conditional expectation applied to \eqref{eq:entropyBeta} yields
		\begin{equation}
			\label{eq:inter2bis}
			\begin{aligned}
				H(\bar \Q | \bar \P) & \ge \frac{1}{2}\int_0^T\E^{\bar \Q}\left[ \left|\sigma^{-1}(r, X_r)\E^{\bar \Q}\left[\left(\beta_r - \int_\U b(r, X_r, u)\bar \Lambda_r(du)\right) \middle | X_r\right] \right|^2\right]dr\\
				& = \frac{1}{2}\int_0^T \E^{\bar \Q}\left[|\sigma^{-1}(r, X_r)(\Gamma(r, X_r) - b(r, X_r, \bar u(r, X_r)))|^2\right]dr,\\
			\end{aligned}
		\end{equation}
		where we used \eqref{eq:Gamma} and \eqref{eq:controlBar} in the last equality. Since $\bar \Q$ and $\Q$ have the same time marginals, we deduce from \eqref{eq:inter2bis} and Fubini's theorem that
		\begin{equation}
			\label{eq:inter2}
			\begin{aligned}
				H(\bar \Q | \bar \P) & \ge \frac{1}{2}\int_0^T \E^{\Q}\left[|\sigma^{-1}(r, X_r)(\Gamma(r, X_r) - b(r, X_r, \bar u(r, X_r)))|^2\right]dr\\
				& = \frac{1}{2}\E^{\Q}\left[\int_0^T |\sigma^{-1}(r, X_r)(\Gamma(r, X_r) - b(r, X_r, \bar u(r, X_r)))|^2dr\right].
			\end{aligned}
		\end{equation}
		Finally, let $\P := \P^{\bar u} \in \shp_\U^{Markov}$ be the unique probability measure given
                by Proposition \ref{prop:existencePu}.
                We recall that, by Remark \ref{rmk:uniqDecomp}, the SDE
                \begin{equation*}
 		X_t = x + \int_0^t b(r, X_r, \bar u(r, X_r))dr + M_t^\P,
 	\end{equation*}
        where $M^\P$ is a local martingale with
        $[M^\P]  = \int_0^\cdot \sigma\sigma^\top(r, X_r)dr$,
       admits uniqueness in law.

                As $H(\bar \Q | \bar \P) < \infty$, \eqref{eq:inter2}
                and Lemma \ref{lemma:girsanovEntropy} $2.$
                % together with uniqueness in law of the decomposition \eqref{eq:decompP} with $\nu^\P_t = \bar u(t, X_t)$ (see Remark \ref{rmk:uniqDecomp})
                implies that $H(\Q | \P) < + \infty$ and that
		\begin{equation}
			\label{eq:inter3}
			H(\Q | \P) = \frac{1}{2}\E^{\Q}\left[\int_0^T |\sigma^{-1}(r, X_r)(\Gamma(r, X_r) - b(r, X_r, \bar u(r, X_r)))|^2dr\right].
		\end{equation}
		In particular, $(\P, \Q) \in \sha$ and combining \eqref{eq:inter1}, \eqref{eq:inter2} and \eqref{eq:inter3} yields $\bar \shj(\bar \Q, \bar \P) \ge \shj(\Q, \P)$. This concludes the proof.
	\end{enumerate}
\end{proof}
We are now ready to prove Theorem \ref{th:existenceSolutionRegProb}.
\begin{proof}[Proof of Theorem \ref{th:existenceSolutionRegProb}.]
	Let $(\P, \Q) \in \sha$. Let $\bar \P$ (resp. $\bar \Q$) be the law of $(X, dt\delta_{\nu^{\P}_t}(du))$ under $\P$ (resp. $\Q$). Then $(\bar \P, \bar \Q) \in (\shp(\bar \Omega))^2$ and $X$ has clearly the decomposition \eqref{eq:decompRelaxed} under $\bar \P$. Furthermore, one has $d\bar \Q/d\bar \P = d\Q/d\P \circ \pi_X $, where $\pi_X$ is the first coordinate projection on $\bar \Omega$, and this yields
	$$
	H(\bar \Q | \bar \P) = \E^{\bar \Q}\left[\log \frac{d\bar \Q}{d\bar \P}\right] = \E^{\bar \Q}\left[\log \frac{d\Q}{d\P} \circ \pi_X  \right] = \E^{\Q}\left[\log \frac{d\Q}{d\P}(X)\right] = H(\Q | \P).
	$$
	Hence $H(\bar \Q | \bar \P) < + \infty$, $(\bar \P, \bar \Q) \in \bar \sha$ and since
	$$
	\E^{\bar \Q}\left[\int_{[0, T] \times \U}f(r, X_r, u)\Lambda(dr, du) + g(X_T)\right] = \E^{\Q}\left[\int_0^Tf(r, X_r, \nu^{\P}_r)dr + g(X_T)\right],
	$$
	we get $\bar \shj(\bar \Q, \bar \P) = \shj(\Q, \P)$. Previous computations then show that $\bar \shj^* \le \shj^*$. Let now $(\bar \P^*, \bar \Q^*) \in \bar \sha$ be the solution of \eqref{eq:relaxedProblem} given by Proposition \ref{prop:relaxedSolution}. In particular, $\bar \shj(\bar \Q^*, \bar \P^*) = \bar \shj^* \le \shj^*$. Let also $(\P^*, \Q^*) \in \sha$ be given by Lemma \ref{lemma:backToOmega} applied to $(\bar \P^*, \bar \Q^*)$. We have $\bar \shj(\bar \Q^*, \bar \P^*) \ge \shj(\Q^*, \P^*)$, hence $\shj^* \ge \shj(\Q^*, \P^*)$, that is $\shj^* = \shj(\Q^*, \P^*)$. This implies that $(\P^*, \Q^*)$ is a solution of Problem \eqref{eq:penalizedProblemIntro}.
\end{proof}

\section{Strong and weak controls}
\setcounter{equation}{0}
\renewcommand\theequation{D.\arabic{equation}}

%%% Le mieux serait de formuler un vrai théorème
%% en affirmant que les  deux problèmes sont équivalents
%%% dans le sens que les deux valeurs sont égales.
% En énonçant les deux problèmes on peut se référer
%% aux notations du papier.
%les deux problèmes (en les citant de l'introduction)
%% 
%%%  j'ai mis la proposition suivante et j'y ai référé dans l'introduction.
\label{app:equiControl}
Let $(\tilde \Omega, \tilde \F, (\tilde \F_t)_{t \in [0, T]}, \tilde \P)$ be a filtered probability space endowed with a Brownian motion $W$. Let $\mathcal{V}$ be the set of $(\tilde \F_t)$-progressively measurable processes $\nu$ on $(\tilde \Omega, \tilde \F, \tilde \P)$ taking values in $\U$ such that equation \eqref{eq:nu} has a unique strong solution.
We give here some details on the equivalence between a strong formulation of our stochastic optimal control
\eqref{eq:strongControlIntro} formulated on the generic probability space $(\tilde \Omega, \tilde \F, \tilde \P)$, and our optimization problem \eqref{eq:controlProblemIntro}. 
We have the following result.
\begin{prop}
	\label{prop:equivControl}
	Assume Hypotheses \ref{hyp:costFunctionsControl} and \ref{hyp:coefDiffusion}. Recall the definition \eqref{eq:strongControlIntro} of $J^*_{strong}$ and \eqref{eq:controlProblemIntro} of $J^*$. Then $J^{*}_{strong} = J^*$.
\end{prop}
\begin{proof}
	\begin{enumerate}[label = (\roman*)]
        \item We first prove that $J^*_{strong} \ge J^*$. Let $(\nu^n)_{n \ge 0}$ be a minimizing sequence of elements of $\mathcal{V}$ for Problem \eqref{eq:strongControlIntro}. For any $n \in \N$,
% estimate in
          Lemma \ref{lemma:classicalEstimates}
          % together with \eqref{eq:linearGrowthbSigma}  Déjà inclus
          and \eqref{eq:polyGrowthfg} yields 
		\begin{equation}
			\label{eq:momentBFStrong}
			\E^{\tilde \P}\left[\int_0^T \left(|b(r, X_r^{\nu^n}, \nu_r^n)| + |f(r, X_r^{\nu^n}, \nu_r^n)| + \|\sigma\sigma^\top(r, X_r^{\nu^n})\|\right)dr\right] < + \infty.
                      \end{equation}
		Then by Corollary 3.7 in \cite{MimickingItoGeneral} there exist a measurable function $\Gamma \in \shb([0, T] \times \R^d, \R^d)$ and a probability measure $\P \in \shp(\Omega)$ such that
		\begin{itemize}
			\item For all $0 \le t \le T$, $\Gamma(t, X_t) = \E^{\tilde \P}[b(t, X_t^{\nu^n}, \nu_t^n)~|~X_t^{\nu^n}]$ $d\tilde \P \otimes dt$-a.e.
			
			\item Under $\P$ the canonical process can be expressed as
			$
			X_t = x + \int_0^t \Gamma(r, X_r)dr + M^\P_t,
			$
			where $M^\P$ is a $(\F_t)$-local martingale
			with $[M^\P] = \int_0^{\cdot}\sigma\sigma^\top(r, X_r)dr$.
			\item $\shl^\P(X_t) = \shl^{\tilde \P}(X_t), \ \forall t \in [0,T].$
		\end{itemize}
		Since \eqref{eq:momentBFStrong} holds, by Lemma \ref{lemma:condExpConvex} applied with $\Omega = \tilde \Omega, \P = \tilde \P, X = X^{\nu^n}$ and $(y_t, z_t) = \left(b(t, X_t^{\nu^n}, \nu_t^n), f(t, X_t^{\nu^n}, \nu_t^n)\right)$, there exists a function $u^n \in \shb([0, T] \times \R^d, \U)$ such that for almost all $t \in [0, T]$, $\P$-a.s.,
		\begin{equation}
			\label{eq:uStrong}
			\left\{
			\begin{aligned}
				& \E^{\tilde \P}\left[b(t, X_t^{\nu^n}, \nu_t^n) \middle | X_t^{\nu^n}\right] = b(t, X_t^{\nu^n}, u^n(t, X_t^{\nu^n}))\\
				& \E^{\tilde \P}\left[f(t, X_t^{\nu^n}, \nu_t^n)\middle | X_t^{\nu^n}\right] \ge f(t, X_t^{\nu^n}, u^n(t, X_t^{\nu^n})).
			\end{aligned}
			\right.
		\end{equation}
		By Fubini's theorem and Jensen's inequality for the conditional expectation, by \eqref{eq:uStrong}
we have		\begin{equation} \label{eq:WeSt}
			\begin{aligned}
				\E^{\tilde \P}\left[\int_0^T f(r, X_r^{\nu^n}, \nu_r^n)dr + g(X_T^{\nu^n})\right] & \ge \E^{\tilde \P}\left[\int_0^T f(r, X_r^{\nu^n}, u^n(r, X_r^{\nu^n}))dr + g(X_T^{\nu^n})\right]\\
				& = \E^{\P}\left[\int_0^T f(r, X_r, u^n(r, X_r))dr + g(X_T)\right]\\
				& \ge \inf_{\bar \P \in \Pma_\U}\E^{\bar \P}\left[\int_0^T f(r, X_r, \nu_r^{\bar \P})dr + g(X_T)\right],
			\end{aligned}
		\end{equation}
		where, for the latter inequality,  we have used the fact that $\P \in \shp_\U$. From \eqref{eq:WeSt},
                for all $n \in \N$, we have
		$$
		\E^{\tilde \P}\left[\int_0^T f(r, X_r^{\nu^n}, \nu_r^n)dr + g(X_T^{\nu^n})\right] \ge J^*,
		$$
		and letting $n \rightarrow + \infty$ yields $J^*_{strong} \ge J^*$.
		
              \item We now prove that $J^* \ge J^*_{strong}$. Let us consider a minimizing sequence $(\P_n)_{n \ge 0}$ of elements of $\shp_\U$ for Problem \eqref{eq:controlProblemIntro}. Notice that, taking into account
      Lemma \ref{lemma:classicalEstimates},
 the estimate \eqref{eq:momentBFStrong} still holds if we replace $(X^{\nu^n}, \nu^n, \tilde \P)$ by $(X, \nu^{\P_n}, \P_n)$. Then for all $n \in \N$, again by Corollary 3.7 in \cite{MimickingItoGeneral} together with Lemma \ref{lemma:condExpConvex} applied with $\P = \P_n, (y_t, z_t) = (b(t, X_t, \nu_t^{\P_n}), f(t, X_t, \nu_t^{\P_n}))$, there exist a function $u^n \in \shb([0, T] \times \R^d, \U)$ and a probability measure $\hat \P_n$ on $(\Omega, \F)$ such that the following holds.
		\begin{itemize}
			\item For almost all $t \in [0, T]$, $\P_n$-a.s.
			\begin{equation*}
				\left\{
				\begin{aligned}
					& \E^{\P_n}\left[b(t, X_t, \nu_t^{\P_n}) \middle | X_t\right] = b(t, X_t, u^n(t, X_t))\\
					& \E^{\P_n}\left[f(t, X_t, \nu_t^{\P_n})\middle|X_t\right] \ge f(t, X_t, u^n(t, X_t)).
				\end{aligned}
				\right.
			\end{equation*}
			\item Under $\hat \P$ the canonical process decomposes as
			$$
			X_t = x + \int_0^t b(r, X_r, u^n(t, X_t))dr + M_t^{\hat \P_n},
			$$
			where $M^{\hat \P_n}$ is an $(\F_t)$-local martingale such that $[ M^{\hat \P_n}] = \int_0^{\cdot} \sigma\sigma(r, X_r)dr$.
			\item $\mathcal{L}^\P(X_t) = \mathcal{L}^{\hat \P_n}(X_t)$.
		\end{itemize}
		On the one hand, Fubini's theorem and Jensen's inequality for conditional expectation yield
		\begin{equation}
			\label{eq:strongToWeak}
			\E^{\P_n}\left[\int_0^T f(r, X_r, \nu_r^{\P_n})dr + g(X_T)\right] \ge \E^{\hat \P_n}\left[\int_0^T f(r, X_r, u^n(r, X_r))dr + g(X_T)\right].
		\end{equation}
		On the other hand, Theorem 1.1 in \cite{ZhangStrong} ensures the existence of a unique (strong)
                solution $X = X^{\nu^n}$ (on the space
		$(\tilde \Omega, \tilde \F, (\tilde \F_t)_{t \in [0, T]}, \tilde \P)$
		to the SDE
		$$
		dX_t = b(t, X_t, u^n(t, X_t))dt + \sigma(t, X_t)dW_t, ~X_0 = x.
		$$
		In particular the process $\nu^n := u^n(., X^{\hat u}_.)$ is an element of $\shv$, and we get by \eqref{eq:strongToWeak} that
		$$
		J(\P_n) \ge \E^{\tilde \P}\left[\int_0^T f(r, X_r^{\nu^n}, \nu^n_r)dr + g(X_T^{\nu^n})\right] \ge \inf_{\nu \in \mathcal{V}} \E^{\tilde \P}\left[\int_0^T f(r, X_r^\nu, \nu_r)dr + g(X_T^\nu)\right] = J^*_{strong}.
		$$
		The previous expression gives $J(\P_n) \ge J^*_{strong}$ for all $n \in \N$,
		and letting $n \rightarrow + \infty$ yields $J^* \ge J^*_{strong}$.
	\end{enumerate}
	By item $(i)$, we have $J^*_{strong} \ge J^*$, whereas by item $(ii)$, $J^* \ge J^*_{strong}$. Hence $J^* = J_{strong}$, and this concludes the proof.
\end{proof}

\section{Proofs of two technical lemmata}

\setcounter{equation}{0}
\renewcommand\theequation{E.\arabic{equation}}
\label{app:proofEstimate}
   
\begin{proof}[Proof of Lemma \ref{lemma:infKtx}]
	\begin{enumerate}
        \item The function $\bar F^{t, x}_\beta$ is coercive on $K(t, x)$ in the sense of Definition 2.13 in \cite{BeckFirstOrder}. Since $K(t, x)$ is closed (see Remark \ref{rmk:KtxClosed}), Theorem 2.14 in \cite{BeckFirstOrder} gives the existence of a minimum $(y^*, z^*)$ to $\bar F^{t, x}_\beta$ on $K(t, x)$, which is unique since $\bar F^{t, x}_\beta$ is strictly convex. 
		Let then $(y, z) \in K(t, x)$. Since $K(t, x)$ is convex, $(\lambda y + (1 - \lambda)y^*, \lambda z + (1 - \lambda)z^*) \in K(t, x)$, for any $\lambda \in ]0, 1]$. By definition of $(y^*, z^*)$ we then have
		$$
		\frac{\bar F_\beta^{t, x}(\lambda y + (1 - \lambda)y^*, \lambda z + (1 - \lambda)z^*) - \bar F_\beta^{t, x}(y^*, z^*)}{\lambda} \ge 0, \quad \text{for all}~\lambda \in ]0, 1],
		$$
		and since $\bar F_\beta^{t, x}$ is of class $\shc^1$ on $\R^d \times \R$, letting $\lambda \rightarrow 0$ in the previous inequality yields $\langle \nabla_{(y, z)}\bar F_\beta^{t, x}(y^*, z^*), (y, z) \rangle \ge 0$, which rewrites as \eqref{eq:firstOrder}.
              		% $$
		% 	z - z^* + \frac{1}{\epsilon}\langle (\sigma^{-1})^\top\sigma^{-1}(t, x) (y^* - \beta(t, x)), y - y^* \rangle \ge 0.
		% $$
		\item
                  We first observe that
                  $u^* \in \underset{a \in \U}{\argmin}~F_\beta(t, x, a)$
                  is equivalent to
    \begin{equation} \label{eq:ustar}
                    F_\beta(t, x, u^*) \le   F_\beta(t, x, a), \
                    \forall a \in \U 
                  \end{equation}
                  and \eqref{eq:ustarB}
                  is equivalent to
\begin{equation} \label{eq:ustarA}
	\bar F^{t, x}_\beta(y, z) \ge \bar F^{t, x}_\beta(y^*, z^*), \ \forall (y,z) \in K(t,x).
		\end{equation}
                For any $a \in \U$ we set now
                $ (y(a),z(a)):= (b(t,x,a),f(t,x,a)).$
                Clearly $(y(a),z(a)) \in K(t,x)$ and
                $(y,z) \in K(t,x)$ if and only if
                there is $a \in U$ with
                $(y,z) = (y(a),z)$ and $z \ge z(a)$.

                In fact we have
                \begin{equation} \label{eq:A1}
                  {\bar F}^{t,x}(y(a),z(a)) = F_\beta(t,x,a).
                  \end{equation}
               \begin{enumerate}
                \item
                  Let $u^* \in \U$ such that $y^* = y(u^*)$
                  % = b(t, x, u^*)$
                  and
                  $z^* \ge z(u^*)$
                  % f(t, x, u^*)$
                  and we prove \eqref{eq:ustar}.
                 By \eqref{eq:A1},  for all $a \in U$, we have
                  \begin{eqnarray*}
 F_\beta(t,x,a) &=& {\bar F}^{t,x}(y(a),z(a)) \ge {\bar F}^{t,x}(y^*,z^*)) =  {\bar F}^{t,x}(y(u^*),z^*)) \\
&\ge& {\bar F}^{t,x}(y(u^*),z(u^*)) = F_\beta(t,x,u^*)
       \end{eqnarray*}
       and \eqref{eq:ustarA} follows.
     \item Let $u^*$ such that \eqref{eq:ustar} holds and
       $a \in \U$ such that
       $y = y(a), z \ge z(a)$.
       Then, using again \eqref{eq:A1} we get
          \begin{eqnarray*}
            {\bar F}^{t,x}(y,z)) &\ge& {\bar F}^{t,x}(y(a),z(a))) = F(t,x,a) \\
         &\ge& F(t,x,u^*)    
= {\bar F}^{t,x}(y(u^*),z(u^*)),
       \end{eqnarray*}
       and \eqref{eq:ustarB} holds.
  \end{enumerate}
\end{enumerate}
\end{proof}

\begin{proof}[Proof of Lemma \ref{lemma:estimateOptimalSolution}]

%\section{Proof of Lemma \ref{lemma:estimateOptimalSolution}}
% \setcounter{equation}{0}
% \renewcommand\theequation{E.\arabic{equation}}
% \label{app:proofEstimate}

By Remark \ref{rmk:OptVal} and
\eqref{eq:optimalSolutionDensity} in Theorem
\ref{th:existenceSolutionRegProb},
the quantity
 $C_\infty := \|d\Q^*_\epsilon/d\P^*_\epsilon\|_\infty$ is finite, so that
\begin{equation}
	\label{eq:estimateStar}
	\E^{\Q^*_\epsilon}\left[\sup_{0 \le r \le T}|X_r|^q\right] \le
        % \left\|\frac{d\Q^*_\epsilon}{d\P^*_\epsilon}\right\|_\infty
        C_\infty \E^{\P^*_\epsilon}\left[\sup_{0 \le r \le T}|X_r|^q\right] \le C_\infty C(q) < + \infty,
\end{equation}
where $C(q)$ is given by Lemma \ref{lemma:classicalEstimates}. As $H(\Q^*_\epsilon | \P^*_\epsilon) < + \infty$, by Theorem \ref{th:girsanovEntropy}
%%% Les probas sont même équivalentes ici dc pas de problèmes
%%%Q et P
there exists a progressively measurable process $\alpha$ such that under $\Q_\epsilon^*$ the canonical process decomposes as
$$
X_t = x + \int_0^t b(r, X_r, u^*_\epsilon(r, X_r))dr + \int_0^t \sigma\sigma^\top(r, X_r)\alpha_rdr + M^*_t, \ t \in [0,T],
$$
where $M^* := M^{\Q^*_\epsilon}$ is a local martingale verifying $[ M^*] = \int_0^\cdot \sigma\sigma^\top(r, X_r)dr$
and $ u^*_\epsilon$ is the Borel function introduced in
Theorem \ref{th:existenceSolutionRegProb}.

Moreover,
\begin{equation}
	\label{eq:entropStar}
	H(\Q^*_\epsilon | \P^*_\epsilon) \ge \frac{1}{2}\E^{\Q_\epsilon^*}\left[\int_0^T |\sigma^\top(r, X_r)\alpha_r|^2dr\right].
\end{equation}
We set $\beta_t := b(t, X_t, u^*_\epsilon(t, X_t)) + \sigma\sigma^\top(t, X_t)\alpha_t$

Let $k \ge 1$. On the one hand, combining \eqref{eq:linearGrowthbSigma}, \eqref{eq:sigmaElliptic} \eqref{eq:estimateStar} and \eqref{eq:entropStar}
and taking into account \eqref{eq:decompQn} for $k+1$ replaced
with $k$, 
we have
\begin{equation}
	\label{eq:interEstimateOpti1}
	\begin{aligned}
		\E^{\Q_\epsilon^*}\left[\int_0^T |\sigma^{-1}(r, X_r)(\beta_r - b(r, X_r, u^k(r, X_r)))|^2dr\right] & \le 4\E^{\Q_\epsilon^*}\left[\int_0^T |\sigma^{-1}(r, X_r)b(r, X_r, u^*_\epsilon(r, X_r))|^2dr\right]\\
		& + 4\E^{\Q_\epsilon^*}\left[\int_0^T |\sigma^{-1}(r, X_r)b(r, X_r, u^k(r, X_r))|^2dr\right]\\
		& + 4\E^{\Q_\epsilon^*}\left[\int_0^T |\sigma^\top(r, X_r)\alpha_r|^2dr\right]\\
		& \le 8c_\sigma C_{b, \sigma}^2\left(T + \int_0^T \E^{\Q_\epsilon^*}[|X_r|^2]dr\right) + 8H(\Q_\epsilon^* | \P_\epsilon^*)\\
		& \le 8Tc_\sigma C_{b, \sigma}^2(1 + C_\infty C(2)) + 8H(\Q_\epsilon^* | \P_\epsilon^*).
	\end{aligned}
\end{equation}

  We recall that, by Remark \ref{rmk:uniqDecomp}, the SDE
                \begin{equation*}
 		X_t = x + \int_0^t b(r, X_r, u^k(r, X_r))dr + M_t^{\P_k},
 	\end{equation*}
        where $M^{\P_k}$ is a local martingale with
        $[M^{\P_k}]  = \int_0^\cdot \sigma\sigma^\top(r, X_r)dr$,
       admits uniqueness in law.

The inequality \eqref{eq:interEstimateOpti1} implies by Lemma \ref{lemma:girsanovEntropy} $2$.
%%%          J'imagine que c'est la "partie 2" du lemma
%%% mais il faudrait faire une mention à l'unicité.
%%% Il faut mentionner la remarque et une équation précise.
that
$$H(\Q_\epsilon^* | \P_k) = \frac{1}{2}	\E^{\Q_\epsilon^*}\left[\int_0^T |\sigma^{-1}(r, X_r)(\beta_r - b(r, X_r, u^k(r, X_r)))|^2dr\right] < + \infty,
$$
hence
\begin{equation}
	\label{eq:interEstimateOpti2}
	H(\Q_\epsilon^* | \P_k) \le 4Tc_\sigma C_{b, \sigma}^2(1 + C_\infty C(2)) + 4H(\Q_\epsilon^* | \P_\epsilon^*).
\end{equation}
On the other hand, by \eqref{eq:polyGrowthfg} and \eqref{eq:estimateStar},
\begin{equation}
	\label{eq:interEstimateOpti3}
	\E^{\Q_\epsilon^*}\left[\int_0^T f(r, X_r, u^k(r, X_r))dr + g(X_T)\right] \le (T + 1)C_{f, g}(1 + C_\infty C(p)).
      \end{equation}
Taking into account \eqref{eq:penalizedProblemIntro} and
combining \eqref{eq:interEstimateOpti2} and \eqref{eq:interEstimateOpti3} yields
\begin{equation}
	\label{eq:interEstimateOpti4}
	\begin{aligned}
		\shj(\Q_\epsilon^*, \P_k) & = \E^{\Q_\epsilon^*}\left[\int_0^T f(r, X_r, u^k(r, X_r))dr + g(X_T)\right] + \frac{1}{\epsilon}H(\Q_\epsilon^* | \P_k)\\
		& \le (T + 1)C_{f, g}(1 + C_\infty  C(p)) + \frac{4Tc_\sigma C_{b, \sigma}^2(1 + C_\infty C(2))}{\epsilon} + \frac{4}{\epsilon}H(\Q_\epsilon^* | \P_\epsilon^*).
	\end{aligned}
\end{equation}
Finally, by \eqref{eq:optimalSolutionDensity} and Jensen's inequality,
\begin{equation}
	\label{eq:interEstimateOpti5}
	\begin{aligned}
		\frac{1}{\epsilon}H(\Q_\epsilon^* | \P_\epsilon^*) & \le - \frac{1}{\epsilon}\log\left(\E^{\Q_\epsilon^*}\left[\exp\left(-\epsilon \int_0^T f(r, X_r, u_\epsilon^*(r, X_r))dr - \epsilon g(X_T)\right)\right]\right)\\
		& \le \E^{\Q_\epsilon^*}\left[\int_0^T f(r, X_r, u_\epsilon^*(r, X_r))dr + g(X_T)\right]\\
		& \le (T + 1)C_{f, g}(1 + C_\infty  C(p)),
	\end{aligned}
\end{equation}
where we have used \eqref{eq:polyGrowthfg} and \eqref{eq:estimateStar} for the last inequality. Injecting \eqref{eq:interEstimateOpti5} in \eqref{eq:interEstimateOpti4} yields the desired result by setting $C := 5(T + 1)C_{f, g}(1 + C_\infty  C(p)) + 4Tc_\sigma C_{b, \sigma}^2(1 + C_\infty C(2))$.
\end{proof}

\section{Miscellaneous}
\setcounter{equation}{0}
\renewcommand\theequation{F.\arabic{equation}}

We gather in this section two useful technical results. In the following, all the random variables are defined on a filtered probability space $(\Omega, \F, (\F_t)_{t \in [0, T]}, \P)$.
\begin{lemma}
	\label{lemma:squareIntVar}
	Let $\eta$ be a square integrable, non-negative random variable. Then for all $\epsilon > 0$,
	$$
	0 \le \E[\eta] - \left(-\frac{1}{\epsilon}\log\E\left[\exp(-\epsilon \eta)\right]\right) \le
       \epsilon Var[\eta] e^{\epsilon \E[\eta]}.
	$$
        
\end{lemma}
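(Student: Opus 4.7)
My plan is to handle the two halves of Lemma \ref{lemma:squareIntVar} separately.

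The lower bound $\E[\eta] \ge -\tfrac{1}{\epsilon}\log\E[\exp(-\epsilon\eta)]$ follows from Jensen's inequality applied to the concave logarithm: $\log\E[\exp(-\epsilon\eta)] \ge \E[\log\exp(-\epsilon\eta)] = -\epsilon\E[\eta]$; dividing by $-\epsilon$ gives the claim, and the quantity is well defined because $\eta \ge 0$ forces $\E[\exp(-\epsilon\eta)] \in (0,1]$.

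For the upper bound, my approach is through a Taylor expansion of the log-Laplace function $\Lambda(s) := \log\E[\exp(-s\eta)]$ in the parameter $s \in [0,\epsilon]$. Thanks to the square-integrability of $\eta$ together with the uniform bound $\exp(-s\eta) \le 1$ for $s \ge 0$, repeated differentiation under the integral sign gives $\Lambda \in C^2([0,\infty))$ with $\Lambda(0) = 0$, $\Lambda'(0) = -\E[\eta]$ and $\Lambda''(s) = Var_{\P_s}[\eta]$, where $d\P_s/d\P := \exp(-s\eta)/\E[\exp(-s\eta)]$ denotes the exponentially tilted probability associated to $\eta$. Taylor's formula with integral remainder then rewrites the defect as
$$
\E[\eta] - \left(-\tfrac{1}{\epsilon}\log\E[\exp(-\epsilon\eta)]\right) \;=\; \tfrac{1}{\epsilon}\int_0^\epsilon (\epsilon - s)\,Var_{\P_s}[\eta]\,ds,
$$
and since $\tfrac{1}{\epsilon}\int_0^\epsilon(\epsilon - s)\,ds = \epsilon/2$, the target bound reduces to establishing the pointwise domination $Var_{\P_s}[\eta] \le Var_\P[\eta]$ for every $s \in [0,\epsilon]$.

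The hard part will be precisely this monotonicity of the variance along the exponential tilt, which is the main obstacle. Heuristically, tilting by $\exp(-s\eta)$ with $s \ge 0$ downweights large values of $\eta$, so the tilted distribution of $\eta$ is stochastically smaller than the original and one expects its dispersion to contract. Quantitatively, $\tfrac{d}{ds} Var_{\P_s}[\eta] = -\E_{\P_s}[(\eta - \E_{\P_s}\eta)^3]$, so the desired monotonicity is equivalent to the tilted law of $\eta$ being (weakly) positively skewed at every $s \ge 0$. I would attempt to secure this either by an FKG / correlation coupling of two independent copies of $\eta$ under $\P_s$ (exploiting that $\eta\mapsto\eta$ is increasing while $\eta\mapsto\exp(-s\eta)$ is decreasing), or by writing $Var_\P[\eta] - Var_{\P_s}[\eta]$ directly as a manifestly non-negative expression via the layer-cake identity
$$
Var_{\Q}[\eta] \;=\; 2\int_0^\infty\!\!\int_0^t \Q(\eta>t)\,\Q(\eta\le s)\,ds\,dt,
$$
combined with the stochastic domination $\P_s(\eta>t) \le \P(\eta>t)$ that is immediate from $s \ge 0$ and $\eta \ge 0$. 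Once the variance monotonicity along the tilt is in hand, the Taylor identity above closes the upper bound and the lemma.
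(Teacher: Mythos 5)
Your Jensen argument for the lower bound is fine, and your reduction of the upper bound is exact: with $\Lambda(s):=\log\E[\exp(-s\eta)]$, Taylor's formula with integral remainder gives precisely
$$
\E[\eta]-\left(-\frac{1}{\epsilon}\Lambda(\epsilon)\right)=\frac{1}{\epsilon}\int_0^\epsilon(\epsilon-s)\,Var_{\P_s}[\eta]\,ds ,
$$
so everything hinges on the claimed domination $Var_{\P_s}[\eta]\le Var_{\P}[\eta]$. That is exactly where the argument breaks: the variance is \emph{not} monotone along the exponential tilt. Take $\eta$ Bernoulli with $\P(\eta=1)=q_0>1/2$. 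Then $\P_s(\eta=1)=q_s=q_0e^{-s}/(1-q_0+q_0e^{-s})$ decreases through $1/2$ as $s$ grows, and $Var_{\P_s}[\eta]=q_s(1-q_s)$ \emph{increases} up to $1/4>q_0(1-q_0)$ before decreasing. Equivalently, such a law is left-skewed, so $-\E_{\P_s}[(\eta-\E_{\P_s}\eta)^3]>0$ and your derivative criterion fails; and in the layer-cake identity the stochastic domination $\P_s(\eta>t)\le\P(\eta>t)$ controls only one of the two factors $\Q(\eta>t)\,\Q(\eta\le s)$ (the other moves the wrong way), so neither of your two proposed routes can close the gap. Stochastic ordering simply does not order variances.

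Worse, your exact identity reveals that the lemma is false as stated, so no repair is possible without changing the statement. For $\eta$ Bernoulli with $\P(\eta=1)=0.6$ and $\epsilon=1$ one computes $\E[\eta]+\log\E[e^{-\eta}]=0.6+\log(0.4+0.6e^{-1})\approx 0.1231$, whereas $\frac{\epsilon}{2}Var[\eta]=0.12$. (The paper's own proof contains the same hidden defect: the pointwise bound $e^{-b}\le 1-b+\frac{b^2}{2}$ invoked there holds only for $b\ge 0$, yet it is applied with $b=\epsilon(\eta-\E[\eta])$, which takes negative values.) What your computation \emph{does} yield correctly is the weaker bound with $Var[\eta]$ replaced by $\sup_{0\le s\le\epsilon}Var_{\P_s}[\eta]$, or, using the correlation inequality $\E[\eta^2e^{-s\eta}]\le\E[\eta^2]\,\E[e^{-s\eta}]$ (an increasing times a decreasing function of $\eta$), by $\E[\eta^2]$; either version would still suffice for the $O(\epsilon)$ estimate in Proposition \ref{prop:approximateControl}, but the inequality with $Var[\eta]$ itself is not available.
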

\begin{proof}
  %%%  J'ai commenté ta version précédente.

% \begin{comment}  
%   {\color{red}
%     For all  $ 0 \le a \le b \ge 0$, it holds by Taylor's
%     formula with integral remainder that
% 	$$
% 	e^{- b} = e^{- a} - (b - a)e^{- a} + \frac{(b - a)^2}{2}e^{- a} - \frac{1}{2}\int_{\R}\1_{\{a \le t \le b\}}(b - t)^2e^{-t}dt \le  e^{- a} - (b - a)e^{- a} + \frac{(b - a)^2}{2}e^{- a}.
% 	$$
% 	Let $\omega \in \Omega$. A direct application of this formula with $a = 0$,
% 	%$b = \epsilon(\eta(\omega) - \E[\eta])$
% 	 $b = \epsilon(\eta(\omega) $
% 	yields
% 	$$
% 	e^{-\epsilon\eta(\omega)} \le 1 - \epsilon\eta(\omega) + \frac{\epsilon^2}{2}\eta^2(\omega).
% 	$$
% \end{comment}
  
	For all  $ b \in \R$, it holds by Taylor's formula with integral remainder that
	$$
	e^{- b} = 1 - b  +  b^2\int_0^1 (1-t) e^{-t b }dt. $$
	 A direct application of this formula with 
	$b = \epsilon(\eta(\omega) - \E[\eta])$ for all $\omega \in \Omega$,
	yields
	$$
	e^{-\epsilon(\eta - \E[\eta])} = 1 - \epsilon(\eta - \E[\eta])
        + \epsilon^2
          (\eta - \E[\eta])^2
        \int_0^1 (1-t) e^{-t \epsilon(\eta - \E[\eta])} dt \le
       1 - \epsilon (\eta - \E[\eta]) + \epsilon^2 (\eta - \E[\eta])^2
         e^{\epsilon \E[\eta]},
         $$
taking into account that $\eta \ge 0$.
         
     \begin{comment}
  	Taking the expectation in previous inequality  we get
	\begin{equation*}
	      \E\left[e^{-\epsilon\eta}\right]
     \le 1 - \epsilon \E[\eta] +  \frac{\epsilon^2}{2}\E[\eta^2].
    
          	\end{equation*}
    Using the inequality $\frac{\epsilon^2}{2}x^2 - \epsilon x \ge -\frac{1}{2}$ together with the fact that $\log(1 + x) \le x$ for all $x > -1,$ we have
	$$
	\frac{1}{\epsilon}\log \E\left[e^{-\epsilon\eta}\right] \le \frac{\epsilon}{2}\E[\eta^2] - \epsilon \E[\eta],
	$$
	and it follows that
	$$
	0 \le \E[\eta] - \left(-\frac{1}{\epsilon}\log \E\left[e^{-\epsilon\eta}\right]\right) \le \frac{\epsilon}{2}\E[\eta^2],
	$$
	where the first inequality follows from Jensen's inequality.
\end{comment}

         	Taking the expectation in previous inequality  we get
	\begin{equation*}
	      \E\left[e^{-\epsilon(\eta - \E[\eta])}\right]
     \le 1  +  \epsilon^2Var[\eta] e^{\epsilon \E[\eta]}.
      	\end{equation*}
   	Since $\log(1 + x) \le x$ for all $x > -1,$ we have
	$$
	\frac{1}{\epsilon}\log \E\left[e^{-\epsilon(\eta - \E[\eta])}\right] \le
        \epsilon Var[\eta] e^{\varepsilon \E[\eta]}.
	$$
	Notice that $\E[\eta]$ is a constant, hence $\frac{1}{\epsilon}\log \E\left[e^{-\epsilon(\eta - \E[\eta])}\right] = \E[\eta] - \left(-\frac{1}{\epsilon}\log \E\left[e^{-\epsilon\eta}\right]\right)$. We then have
	$$
	0 \le \E[\eta] - \left(-\frac{1}{\epsilon}\log \E\left[e^{-\epsilon\eta}\right]\right) \le \epsilon Var[\eta] e^{\epsilon \E[\eta]},
	$$
	where the first inequality follows from Jensen's inequality.

      \end{proof}
      
\begin{lemma}
	\label{lemma:nelsonDerivative}
	Let $(X_t)_{t \in [0, T]}$ be an $(\F_t)$-adapted process of the form
		$$
		X_t = x + \int_0^t b_rdr + M_t,
		$$
	where $\E\left[\int_0^T |b_r|^pdr\right] < + \infty$ for some $p > 1$ and where $M$ is a martingale. For Lebesgue almost all $0 \le t < T$
	$$
	\lim_{h \downarrow 0}\E\left[\frac{X_{t + h} - X_t}{h}~\Big|~\F_t\right] = b_t~\text{in}~L^1(\P).
	$$
\end{lemma}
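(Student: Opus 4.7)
The plan is to reduce the claim to a Lebesgue differentiation theorem applied pathwise to $r\mapsto b_r$, using the martingale property of $M$ to eliminate the noise contribution. First, since $M$ is an $(\F_t)$-martingale, the increments $M_{t+h}-M_t$ are conditionally centered, so
\[
\E\!\left[\frac{X_{t+h}-X_t}{h} \,\Big|\, \F_t\right] \;=\; \E\!\left[\frac{1}{h}\int_t^{t+h} b_r\,dr \,\Big|\, \F_t\right].
\]
Implicit in the statement is that $b$ is progressively measurable (otherwise the bounded-variation part would not identify $b$ uniquely), so for a.e. $t$ the variable $b_t$ is $\F_t$-measurable, i.e. $\E[b_t\mid\F_t]=b_t$. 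Since conditional expectation is an $L^1(\P)$-contraction, the lemma will follow once I establish
\[
\frac{1}{h}\int_t^{t+h} b_r\,dr \;\xrightarrow[h\downarrow 0]{L^1(\P)}\; b_t, \qquad \text{for Lebesgue-a.e. } t\in[0,T).
\]

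Next I would obtain the almost sure analogue of this convergence by a Fubini argument. For $\P$-a.e. $\omega$, the path $r\mapsto b_r(\omega)$ lies in $L^p([0,T])$ by the hypothesis together with Tonelli's theorem, and the classical Lebesgue differentiation theorem furnishes a full Lebesgue-measure subset $N(\omega)\subset[0,T]$ on which $\frac{1}{h}\int_t^{t+h} b_r(\omega)\,dr\to b_t(\omega)$. The set $\{(t,\omega):t\notin N(\omega)\}$ has zero product measure, so a second application of Fubini yields, for Lebesgue-a.e. $t\in[0,T)$, $\P$-a.s. convergence of the averages to $b_t$.

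The final step upgrades the a.s. convergence to $L^1(\P)$ convergence, and this is where the assumption $p>1$ plays its decisive role. Jensen's inequality gives $\bigl|\tfrac{1}{h}\int_t^{t+h} b_r\,dr\bigr|^p \le \tfrac{1}{h}\int_t^{t+h}|b_r|^p\,dr$; taking expectations and invoking Lebesgue differentiation on the deterministic $L^1$-function $f(r):=\E[|b_r|^p]$ shows that for a.e. $t$,
\[
\sup_{h>0}\,\E\!\left[\left|\tfrac{1}{h}\int_t^{t+h} b_r\,dr\right|^p\right] < +\infty.
\]
A family bounded in $L^p$ with $p>1$ is uniformly integrable, so Vitali's convergence theorem promotes the a.s. convergence to $L^1(\P)$ convergence, concluding the argument.

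The main care is bookkeeping in the Fubini step that transfers pathwise Lebesgue differentiation into a statement valid for a.e.\ $t$ jointly with $\P$-a.s., and the only place where the strict inequality $p>1$ (as opposed to a mere $L^1$ integrability) is essential is the uniform-integrability step: without it one would not be able to convert a.s. convergence into $L^1(\P)$ convergence.
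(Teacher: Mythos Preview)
Your argument is correct and uses the same core ingredients as the paper: the martingale property to reduce to averages of $b$, pathwise Lebesgue differentiation, and the $L^p$-bound with $p>1$ to upgrade a.s.\ convergence to $L^1$ via Vitali. The organization differs in one technical point. The paper works globally in $L^1(d\P\otimes dt)$ and obtains uniform integrability through a Fubini swap,
\[
\sup_{0<h\le 1}\E\Big[\int_0^T\Big|\tfrac{1}{h}\!\int_t^{t+h} b_r\,dr\Big|^p dt\Big]\le \E\Big[\int_0^T |b_r|^p\,dr\Big],
\]
whereas you fix $t$ and bound $\sup_{h>0}\E\big[|\tfrac{1}{h}\!\int_t^{t+h} b_r\,dr|^p\big]$ by the one-sided maximal function of $r\mapsto \E[|b_r|^p]$, which is finite a.e.\ since this function lies in $L^1([0,T])$. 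Your route has the merit of delivering the ``for Lebesgue-a.e.\ $t$, convergence in $L^1(\P)$'' conclusion directly, while the paper's argument first establishes $L^1(d\P\otimes dt)$-convergence and then reads off the a.e.-$t$ statement.
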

\begin{proof}
  In this proof we extend the process $X$ by continuity after $T$ and $b_t$ by zero for $t > T$.
	Let $0 < h \le 1$. Notice first that
	$$
	\E\left[\int_0^T \left|\E\left[\frac{X_{t + h} - X_t}{h}~\Big|~\F_t\right] - b_t\right|dt\right] \le \E\left[\int_0^T\left|\frac{1}{h}\int_t^{t + h}b_rdr -b_t\right|dt\right],
	$$
	and that for all $\omega \in \Omega$, for almost all $0 \le t < T$, by Lebesgue differentiation theorem,
	\begin{equation}
		\label{eq:lebesgueDiff}
		\frac{1}{h}\int_t^{t + h}b_r(\omega)dr \underset{n \rightarrow + \infty}{\longrightarrow} b_t.
    \end{equation}
	To conclude by a uniform integrability argument w.r.t. $dt \otimes d\P$ we need to prove that
	$$
	\sup_{0 < h \le 1} \E\left[\int_0^T \left|\frac{1}{h}\int_t^{t + h}b_rdr\right|^pdt\right] < + \infty.
	$$
	Previous expectation, by H\"older inequality, is upper bounded  by
	\begin{equation*}
          \E\left[\int_0^T \frac{1}{h}\int_t^{t + h}|b_r|^pdrdt\right] = \E\left[\int_0^T |b_r|^p \frac{1}{h}\int_{(r - h)_+}^{r}dtdr\right]
          \le \E\left[\int_0^T |b_r|^pdr\right] < + \infty,
	\end{equation*}
	where interchanging the integral inside the expectation is justified by Fubini's theorem.           
%	The result is then a consequence of \eqref{eq:lebesgueDiff}, provided that we can interchange the integrals and the limit in $h$ in the right-hand side of the previous inequality. But
%	$$
%	\left|\frac{1}{h}\int_t^{t + h}b_rdr - b_t\right|^p \le \frac{2^p}{h^p}\left|\int_t^{t + h}b_rdr\right| + 2^p|b_t|^p \le \frac{2^p}{h}\int_t^{t + h}|b_r|^pdr + 2^p|b_t|^p,
%	$$
%	where we used Hölder's inequality for the second inequality. Integrating the previous inequality between $0$ and $T$ and taking the expectation yields
%	$$
%	\E\left[\int_0^T\left|\frac{1}{h}\int_t^{t + h}b_rdr - b_t\right|^p \right] \le 2^p\int_0^T \frac{1}{h}\int_t^{t + h}\E[|b_r|^p]dr + 2^p\E\left[\int_0^T |b_t|^pdt\right].
%	$$
%	By Lebesgue differentiation theorem, $\frac{1}{h}\int_t^{t + h}\E[|b_r|^p]dr \underset{n \rightarrow + \infty}{\longrightarrow} \E[|b_t|^p]$ for almost all $t \in (0, T)$. Hence $\underset{h \in (0, 1]}{\sup}\frac{1}{h}\int_t^{t + h}\E[|b_r|^p]dr < + \infty$ and
%	$$
%	\sup_{h \in (0, 1]}\E\left[\int_0^T\left|\frac{1}{h}\int_t^{t + h}b_rdr - b_t\right|^p \right] < + \infty.
%	$$
	The family $\left(\frac{1}{h}\int_t^{t + h}b_rdr\right)_{0 < h \le 1}$ is uniformly integrable with respect to $dt \otimes d\P$ and we conclude using the Lebesgue's dominated convergence theorem.
\end{proof}
\begin{remark} 	\label{remark:nelsonDerivative}
  If $b_t$ is a.e. $\sigma(X_t)$-measurable then
  the statement of Lemma \ref{lemma:nelsonDerivative} still holds 
  replacing the $\sigma$-field $\shf_t$ with $\sigma(X_t)$.
  This is an obvious property of the tower property of the conditional expectation.
\end{remark}

\section*{Acknowledgments}

%The authors are also grateful to  Xiaolu Tan for fruitful discussions.
%They are acknoledge the work of two Referees who have evaluated a
%first version of the paper, stimulating them
%them to drastically improve it.

The research of the first named author is supported by a doctoral fellowship
PRPhD 2021 of the Région Île-de-France.
The research of the second and third named authors was partially
supported by the  ANR-22-CE40-0015-01 project SDAIM.

\bibliographystyle{plain}
\bibliography{../../../../../BIBLIO_FILE/ThesisBourdais}

\end{document}